\theoremstyle{plain}
\newtheorem*{thm*}{Theorem}
\newtheorem*{lm*}{Lemma}
\newtheorem{thm}{Theorem}
\newtheorem{lm}[thm]{Lemma}
\newtheorem{cor}[thm]{Corollary}
\newtheorem{prop}[thm]{Proposition}
\newtheorem{que}[thm]{Question}
\theoremstyle{definition}
\newtheorem{de}[thm]{Definition}
\declaretheorem[sibling=thm,name=Example,qed={$\clubsuit$}]{ex}
\newtheorem{re}[thm]{Remark}
\newtheorem{no}[thm]{Notation}
\DeclareMathOperator{\ZZ}{{\mathbb Z}}
\DeclareMathOperator{\FF}{{\mathbb F}}
\let\AA\relax\DeclareMathOperator{\AA}{{\mathbb A}}
\DeclareMathOperator{\Set}{\mathbf{Set}}
\let\Top\relax\DeclareMathOperator{\Top}{\mathbf{Top}}
\DeclareMathOperator{\ModR}{\mathbf{Mod}_R}
\DeclareMathOperator{\ModB}{\mathbf{Mod}_B}
\DeclareMathOperator{\AlgR}{\mathbf{Alg}_R}
\DeclareMathOperator{\AlgB}{\mathbf{Alg}_B}
\DeclareMathOperator{\DomR}{\mathbf{Dom}_R}
\DeclareMathOperator{\fgModR}{\mathbf{fgMod}_R}
\DeclareMathOperator{\fgfMod}{\mathbf{fgfMod}}
\DeclareMathOperator{\fgfModR}{\mathbf{fgfMod}_R}
\DeclareMathOperator{\fgfModB}{\mathbf{fgfMod}_B}
\DeclareMathOperator{\PFR}{\mathbf{PF}_R}
\DeclareMathOperator{\PFA}{\mathbf{PF}_A}
\newcommand{\fp}{\mathfrak{p}}
\newcommand{\fq}{\mathfrak{q}}
\newcommand{\kbar}{\overline{K}}
\newcommand{\kpbar}{\overline{K_{\fp}}}
\DeclareMathOperator{\cha}{char}
\DeclareMathOperator{\id}{id}
\DeclareMathOperator{\im}{im}
\DeclareMathOperator{\red}{red}
\DeclareMathOperator{\rad}{rad}
\DeclareMathOperator{\rk}{rk}
\DeclareMathOperator{\End}{End}
\DeclareMathOperator{\Frac}{Frac}
\DeclareMathOperator{\GL}{GL}
\DeclareMathOperator{\Gr}{Gr}
\DeclareMathOperator{\Hom}{Hom}
\DeclareMathOperator{\Sh}{Sh}
\DeclareMathOperator{\Spec}{Spec}
\DeclareMathOperator{\Sp}{Sp}
\DeclareMathOperator{\Forget}{Forget}
\DeclareMathOperator{\bd}{\mathbf{d}}
\DeclareMathOperator{\cI}{\mathcal{I}}
\DeclareMathOperator{\cV}{\mathcal{V}}
\let\phi\relax\DeclareMathOperator{\phi}{\varphi}
\author{Arthur Bik}
\address{University of Bern, Switzerland, and MPI for Mathematics in the Sciences, Germany}
\email{arthur.bik@mis.mpg.de}
\author{Alessandro Danelon}
\address{Eindhoven University of Technology, The Netherlands}
\email{a.danelon@tue.nl}
\author{Jan Draisma}
\address{Universit\"at Bern, Switzerland, and Eindhoven University of Technology, The Netherlands}
\email{jan.draisma@unibe.ch}
\begin{document}
\title[Topological Noetherianity of polynomial functors II]{Topological Noetherianity of polynomial functors II: base rings with Noetherian spectrum}

\begin{abstract}
In a previous paper, the third author proved that finite-degree polynomial
functors over infinite fields are topologically Noetherian. In this paper,
we prove that the same holds for polynomial functors from free $R$-modules
to finitely generated $R$-modules, for any commutative ring $R$ whose
spectrum is Noetherian. As Erman-Sam-Snowden pointed out, when applying
this with $R=\ZZ$ to direct sums of symmetric powers, one of their proofs
of a conjecture by Stillman becomes characteristic-independent.

Our paper advertises and further develops the beautiful but not so
well-known machinery of polynomial laws. In particular, to any finitely
generated $R$-module~$M$ we associate a topological space, which we show
is Noetherian when $\Spec(R)$ is; this is the degree-zero case of our
result on polynomial functors.
\end{abstract}

\maketitle

\section{Introduction and main theorem}

\subsection{Summary}
A polynomial functor over an infinite field $K$ is a functor
$P$ from the category of finite-dimensional $K$-vector spaces to itself
such that for any two finite-dimensional vector spaces $V,W$ the map
$P_{V,W}\colon\Hom(V,W) \to \Hom(P(V),P(W))$ is a polynomial map. In many
respects, polynomial functors behave like univariate polynomials: they can
be added (direct sums), multiplied (tensor products), and composed; they
are direct sums of unique homogeneous polynomial functors of degrees $0,1,2,\ldots$;
and---for the theory that we are about to develop quite importantly---they
can be shifted by a constant: if $P$ is a polynomial functor and $U$
a constant vector space, then the functor $\Sh_U(P)$ that assigns to
$V$ the vector space $P(U \oplus V)$ and to $\phi \in \Hom_K(V,W)$ the
linear map $P(\id_U \oplus\,\phi)$ is a polynomial functor. Furthermore,
if $P$ has finite degree, which we will always require, then---much
like a univariate polynomial and its shift by a constant---$\Sh_U(P)$
has the same degree, and the top-degree homogeneous components of $P$
and $\Sh_U(P)$ are canonically isomorphic. 

From a different perspective, polynomial functors are the ambient spaces
of ``$\GL_\infty$-equivariant algebraic geometry'', a research area
which has seen much activity over the last years. A closed subset of $P$
is a rule $X$ that assigns to a vector space $V$ a Zariski-closed subset
$X(V)$ of $P(V)$ in such a manner that for each $\phi \in \Hom(U,V)$,
the linear map $P_{U,V}(\phi)$ maps $X(U)$ into $X(V)$. In earlier
work~\cite{draisma}, the third author showed that if $P$ has finite
degree, then it is Noetherian in the sense that any descending chain
of closed subsets $P \supseteq X_1 \supseteq X_2 \supseteq\ldots$
eventually stabilises. This was used in work by Erman-Sam-Snowden
\cite{erman-sam-snowden,erman-sam-snowden2,erman-sam-snowden3} and by
Draisma-Laso\'n-Leykin \cite{draisma-lason-leykin} in new proofs of the
conjecture by Stillman that the projective dimension of a homogeneous
ideal that is generated by a fixed number of forms of a fixed degree is
uniformly bounded independently of the number of variables \cite[Problem
3.14]{Peeva09}.  In this context, Erman-Sam-Snowden asked whether the
Noetherianity of polynomial functors also holds over $\ZZ$; this would
show that their proof of Stillman's conjecture yields bounds that are
independent of the characteristic, just like another 
proof by Erman-Sam-Snowden \cite{erman-sam-snowden}
and the original proof by Ananyan-Hochster \cite{ananyan-hochster}.

In this paper, we settle Erman-Sam-Snowden's question in the
affirmative. Indeed, rather than working over $\ZZ$, we will work over a
ring $R$ whose spectrum is Noetherian---this turns out to be
precisely the setting where topological Noetherianity also holds
for polynomial functors. 

So let $R$ be a ring (commutative with $1$). In
Section~\ref{sec:Polynomial laws} we will review the notion of {\em
polynomial laws} from an $R$-module $M$ to an $R$-module $N$.  In the
special case where $N=R$, these polynomial laws form a graded ring $R[M]$
(see \S\ref{ssec:RM}), where the notation is chosen to resemble that
for the coordinate ring of an affine variety. This ring will be used in
Section~\ref{sec:The topological space AM} to define a topological space
$\AA_M$, in such a manner that any polynomial law $\phi\colon M \to N$
yields a continuous map, also denoted $\phi$, from $\AA_M \to \AA_N$.
To be precise, $\AA_M$ is a topological space over the category $\DomR$
of $R$-domains with $R$-algebra monomorphisms. Here a topological space
over a category $\mathbf{C}$ is not a single set, but a functor from
$\mathbf{C}$ equipped with the notions of elements and (closed) subsets,
and we let all definitions related to usual topological spaces stated in
terms of their elements and (closed) subsets carry over to this
setting; see Definition~\ref{de:TopSpace} for details. 

If $M$ is freely generated by $n$ elements, then $R[M]$ is the polynomial
ring $R[x_1,\ldots,x_n]$ and 
the poset of closed sets in $\AA_M$ is the same as that in the spectrum of
$R[M]$. In general, however, we do not completely understand the relation
between $\AA_M$ and the spectrum of $R[M]$ (see Remark~\ref{re:SpecRM}),
and we work with the former rather than the latter. The following result
is a topological version of Hilbert's basis theorem in this setting.

\begin{prop} \label{prop:TopHilbert}
If $R$ has a Noetherian spectrum and $M$ is a finitely generated
$R$-module, then the topological space $\AA_M$ over $\DomR$ is Noetherian.
\end{prop}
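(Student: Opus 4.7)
The plan is to reduce to the case of finitely generated free modules and then invoke a classical theorem in commutative algebra. Since $M$ is finitely generated, I choose a surjection $\pi\colon F\twoheadrightarrow M$ with $F=R^n$ free of finite rank. The map $\pi$ is $R$-linear, hence in particular a polynomial law, and therefore induces a continuous map $\pi_\ast\colon\AA_F\to\AA_M$ of topological spaces over $\DomR$. For every $R$-domain $A$, the induced map at the level of $A$-points may be identified with the surjection $F\otimes_R A\twoheadrightarrow M\otimes_R A$ coming from the right-exactness of $-\otimes_R A$. This functorial surjectivity forces the pullback $\pi_\ast^{-1}$ from closed subsets of $\AA_M$ to closed subsets of $\AA_F$ to be an order-preserving injection, so it suffices to show that $\AA_F$ is Noetherian.

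When $F=R^n$, the excerpt records that the poset of closed subsets of $\AA_F$ coincides with that of $\Spec R[x_1,\ldots,x_n]$; the problem therefore reduces to showing that this latter space is a Noetherian topological space whenever $\Spec R$ is. By induction on $n$, this in turn reduces to the classical theorem (Ohm--Pendleton) that $\Spec R[x]$ has Noetherian underlying space whenever $\Spec R$ does. The standard proof exploits the characterisation of Noetherian spectra by the property that every radical ideal is the radical of a finitely generated ideal, a property that is preserved under polynomial extensions.

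The delicate step will be the first one: one must verify, within the ``topological space over a category'' formalism of Definition~\ref{de:TopSpace}, that surjectivity of $\pi_\ast$ on $A$-points for every $A\in\DomR$ genuinely implies injectivity of the pullback on closed subsets. This rests on the fact---presumably built into the paper's definition of $\AA_M$ and of its closed subsets---that a closed subset is determined by its family of $A$-points as $A$ ranges over $\DomR$. Once this foundational point is pinned down, the rest of the proof is essentially routine commutative algebra.
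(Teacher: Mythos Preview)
Your proposal is correct and follows essentially the same route as the paper: reduce to the free case via a surjection $R^n\to M$, then use that $\Spec R[x_1,\ldots,x_n]$ is Noetherian whenever $\Spec R$ is (the paper cites \cite{erman-sam-snowden4} with trivial group rather than Ohm--Pendleton, but these amount to the same classical fact). Your ``delicate step'' is not actually delicate: in Definition~\ref{de:SubsetAM} a closed subset of $\AA_M$ is \emph{defined} as a rule $D\mapsto X(D)\subseteq D\otimes M$, so two closed subsets with the same $D$-points for all $D\in\DomR$ are equal by definition, and the paper simply writes $X_i(D)=(1\otimes\phi)(Y_i(D))$ to recover $X_i$ from $Y_i$.
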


Interestingly, it is not true that if $R$ is Noetherian and $M$ is
finitely generated, then $R[M]$ is Noetherian (see
Example~\ref{ex:NonNoetherian}),
so ``topologically Noetherian'' is the most natural setting here.
A special case of the theorem (taking $M$ free of rank $1$) is that if $R$
has a Noetherian spectrum, then so does the polynomial ring $R[x]$. This
special case, a topological version of Hilbert's basis theorem, is easy
and well-known; e.g., it also follows from \cite[Theorem 1.1]{erman-sam-snowden4}
with a trivial group $G$.

Following \cite{roby}, in Section~\ref{sec:Polynomial functors over a ring} we will recall the notion of
polynomial functors from the category $\fgfModR$ of finitely generated
free $R$-modules to the category $\ModR$ of $R$-modules. These polynomial
functors form an Abelian category. The subcategory of polynomial functors
from $\fgfModR$ to the category $\fgModR$ of finitely generated,
but not necessarily free, 
$R$-modules is not an Abelian subcategory when $R$ is not Noetherian,
but it is closed under taking quotients, and this will suffice for
our purposes.

Given a polynomial functor $P\colon\fgfModR\to\fgModR$,
a closed subset of $\AA_P$ 
is a rule~$X$ that assigns to each finitely generated free $R$-module $U$
a closed subset $X(U)$ of $\AA_{P(U)}$ such that the
continuous map corresponding to the polynomial law 
\[ 
\Hom(U,V) \times P(U) \to P(V),\quad (\phi,p) \mapsto
P_{U,V}(\phi)(p) 
\]
maps the pre-image of $X(U)$ under the projection on $P(U)$ in $\AA_{\Hom(U,V) \times
P(U)}$ into $X(V)$ (see \S\ref{ssec:ClosedSubsets} for
details). 
If $Y$ is a second such rule, then we say that $X$ is a subset of $Y$ if
$X(U)$ is a subset of $Y(U)$ for each $U \in \fgfModR$. Our main result,
then, is the following.

\begin{thm} \label{thm:Main}
Let $R$ be a commutative ring whose spectrum is a Noetherian topological
space and let $P$ be a finite-degree polynomial functor $\fgfModR
\to \fgModR$. Then every descending chain $X_1 \supseteq X_2 \supseteq
\ldots$ of closed subsets of $\AA_P$ stabilises: for all sufficiently large~$n$
 we have $X_n=X_{n+1}$.
\end{thm}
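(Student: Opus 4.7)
The plan is induction on the degree $d$ of $P$. For the base case $d = 0$, any degree-zero polynomial functor into $\fgModR$ is, up to natural isomorphism, the constant functor attached to some finitely generated $R$-module $M$, so $\AA_P = \AA_M$, and the statement is exactly Proposition~\ref{prop:TopHilbert}.

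For the inductive step, fix $d \geq 1$, assume the theorem for every polynomial functor of degree strictly less than $d$ (over any ring with Noetherian spectrum, so that I may freely replace $R$ by base changes to $R$-domains along the way), and suppose toward a contradiction that $X_1 \supsetneq X_2 \supsetneq \cdots$ is a strictly descending chain of closed subsets of $\AA_P$. The strategy is to import the shift trick from~\cite{draisma} into the polynomial-law framework. For each $n$, strict containment $X_n \supsetneq X_{n+1}$ provides, by genericity and the functoriality of $\AA_P$ over $\DomR$, a finitely generated free module $U_n$, an $R$-domain $B_n$, and a point $p_n \in X_n(U_n)(B_n) \setminus X_{n+1}(U_n)(B_n)$. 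Passing to a subsequence, I would arrange a uniform choice of $U$ and work inside the shifted functor $\Sh_U(P)$, which has the same top-degree component $P_d$ as $P$ but admits a decomposition $\Sh_U(P) \cong P \oplus \Delta_U(P)$ where $\Delta_U(P)$ is a polynomial functor of degree at most $d - 1$ encoding the ``directional derivatives'' of $P$ along $U$.

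The heart of the argument is to translate each $X_n$ inside $\AA_{\Sh_U(P)}$ by (a lift of) the generic point $p_n$ and then project onto $\Delta_U(P)$. The resulting chain of closed subsets in $\AA_{\Delta_U(P)}$ lives in a polynomial functor of smaller degree, to which the induction hypothesis applies (after base change to a common $R$-domain), forcing it to stabilise. The remaining task is to show that stabilisation of the projected chain forces stabilisation of the original chain; this uses that once the projection is constant, the fibre conditions cutting out each $X_n$ are controlled by closed conditions whose structure reduces, via a further application of induction or via Proposition~\ref{prop:TopHilbert} applied fibrewise, to the lower-degree case.

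The main obstacle I anticipate is transporting the shift argument from the single-field setting of~\cite{draisma} to the category $\DomR$: generic points now live over varying $R$-domains, so each step of the inductive argument must be compatible with base change along morphisms of domains, and one must verify that the shift and projection operations preserve strict descent of the chain after these base changes. The polynomial-law formalism of Section~\ref{sec:Polynomial laws} is designed precisely to make these operations functorial, but confirming that a strictly descending chain in $\AA_P$ really forces a strictly descending chain in $\AA_{\Delta_U(P)}$---rather than collapsing to a stable chain upon projection---is the technical crux, and will likely require a careful genericity argument relying on Proposition~\ref{prop:TopHilbert} to select the witnesses $p_n$ inside $X_n \setminus X_{n+1}$.
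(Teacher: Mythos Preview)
Your proposal has the right starting gestures---the base case via Proposition~\ref{prop:TopHilbert} and the relevance of the shift functor---but the inductive architecture you sketch does not match what is actually needed, and the gap you flag as ``the technical crux'' is real and not resolved by what you wrote.

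First, the induction is not simply on $\deg P$. Projecting from $\Sh_U(P)$ to $\Delta_U(P)=\Sh_U(P)/P$ throws away far too much: a strictly descending chain in $\AA_P$ can easily become stationary under that projection, and you offer no mechanism to recover it. What the argument (here and already in~\cite{draisma}) actually does is isolate an \emph{irreducible} subfunctor $M$ of the top-degree part $P_d$ and project $\Sh_U(P)\to\Sh_U(P)/M$. This target still has degree $d$; the induction is on a well-founded order on pairs $(R,P)$ that allows two moves: shrinking $\Spec(R)$ (replacing $R$ by $R/\fp$) and replacing $P$ by a functor whose top-degree part is a proper quotient of $P_d$. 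Your degree-only induction does not see the second move.

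Second, the key device that makes the projection faithful is not ``translation by a generic point'' but localisation at a carefully constructed function $h$. One takes a minimal-degree equation $f$ witnessing $\delta_X<\infty$, performs a Taylor expansion along $M(U)$ to extract a directional derivative $h$ of strictly smaller standard degree that does \emph{not} vanish on $X$, and then shows (using irreducibility of $M_{\overline{K_\fp}}$) that the projection $\Sh_U(X)[1/h]\to(\Sh_U(P)/M)[1/h]$ is a homeomorphism onto a closed subset. Strict descent is thus preserved on the basic open where $h\neq 0$; the closed locus $Y$ where $h$ vanishes is handled by a separate \emph{inner induction} on $\delta_X$. None of these ingredients---the irreducible $M$, the function $h$, the inner induction on $\delta_X$---appear in your outline.

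Third, moving from an infinite field to a ring with Noetherian spectrum is not just a matter of making constructions functorial in $\DomR$. The paper strengthens the statement to a pair of conditions $\Sigma(R,P,X)$, the second of which controls how vanishing over $\overline{K}$ propagates to vanishing over $\overline{K_\fp}$ for $\fp$ in a dense open of $\Spec(R)$; this is what lets one localise $R$ freely and then use Noetherian induction on $\Spec(R)$ to cover the complement. Establishing that $M_{\overline{K_\fp}}$ remains irreducible on a dense open (Proposition~\ref{prop:Irred}) and that the coordinates on $M(V)$ are expressible modulo the ideal of $\Sh_U(X)[1/h]$ globally (the submodule $N(V)$ argument in \S\ref{ssec:LocShift}) are the genuinely new technical steps over~\cite{draisma}, and they are absent from your plan.
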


Proposition~\ref{prop:TopHilbert} is the special case of Theorem~\ref{thm:Main}
where the polynomial functor has degree $0$, i.e., sends each $U$
to a fixed module $M$ and each morphism to the identity
$\id_M$. Proposition~\ref{prop:TopHilbert} will be proved first, as a base case
in an inductive proof of Theorem~\ref{thm:Main}.

\subsection{Structure of the paper} \label{ssec:Structure}
In Section~\ref{sec:preliminary}, we establish
and recall certain basic results. In Section~\ref{sec:Polynomial laws} we define polynomial laws and the coordinate ring of a module over a ring. Section~\ref{sec:The topological space AM} is devoted to the topological space $\AA_M$. Here we prove Proposition~\ref{prop:TopHilbert}, the first fundamental
fact needed for our inductive proof of Theorem~\ref{thm:Main}. 

Then, in Section~\ref{sec:Polynomial functors over a ring} we recall the definition a polynomial functor $P$ over a ring and several of its properties. Among these is the Friedlander-Suslin
lemma that yields equivalences of Abelian categories between polynomial
functors $\fgfModR\to\fgModR$ of degree $\leq d$
and finitely generated modules for the
non-commutative $R$-algebra $R[\End(U)]_{\leq d}^*$ (called the Schur
algebra) for any $U \in \fgfModR$ of rank~$\geq d$. We also prove the second fundamental
fact needed for Theorem~\ref{thm:Main}: if $R$ is a
domain and $P$ a polynomial functor from $\fgfModR$ to $\ModR$ such
that $\Frac(R) \otimes P$ is irreducible, then $\Frac(R/\fp) \otimes
P$ is irreducible for all primes $\fp$ in some open dense subset of
$\Spec(R)$. This is an incarnation of the philosophy in representation
theory that irreducibility is a generic condition.

Finally, in Section~\ref{sec:Proof} we prove Theorem~\ref{thm:Main}. The
global proof strategy is as follows: we show that the induction steps
in \cite{draisma}, where Theorem~\ref{thm:Main} is proved when $R$ is
an infinite field, can be made global in the sense that they hold for
$\Frac(R/\fp)$ for all $\fp$ in some open dense subset of $\Spec(R)$;
and then we use Noetherian induction on $\Spec(R)$ to deal with the
remaining primes $\fp$. The details of this approach are a quite subtle
and beautiful.

The big picture is depicted in the following diagram:
\begin{center}
\includegraphics[width=\textwidth]{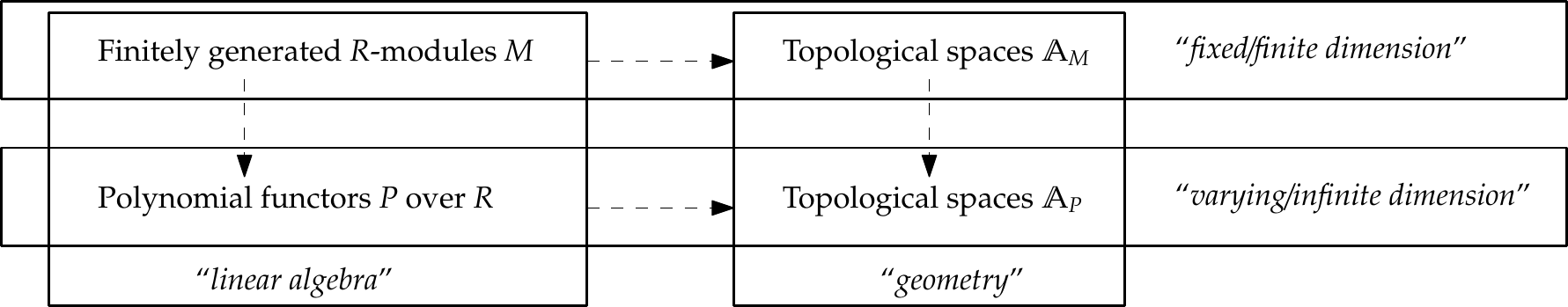}
\end{center}

\noindent
Building on the notion of finitely generated $R$-modules, on the left
we pass to polynomial functors over $R$. Here many results carry over,
such as the fact that the rank is a semicontinuous function on $\Spec(R)$;
see Proposition~\ref{prop:DimensionFunction}.  We regard this as ``linear
algebra in varying dimensions''. In the other direction, we construct
the topological space $\AA_M$ and enter the realm of algebraic geometry;
the closed subsets generalise affine algebraic varieties. Finally, both
constructs come together in the construction of the topological space
associated to a polynomial functor $P$. Here we use both results from the
``linear algebra'' of polynomial functors, such as Friedlander-Suslin's
lemma, and results about the topological spaces $\AA_M$, to prove that
$\AA_P$ is Noetherian. Furthermore, we establish the fundamental
result that the dimension function of a closed subset of $\AA_P$
depends on primes in $\Spec(R)$ in a constructible manner; see
Proposition~\ref{prop:Last}.

\subsection{A class of applications} \label{ssec:Applications}

Our original motivation for this paper is the following: let $P,Q$
be (finite-degree) polynomial functors from the category of finitely
generated free $\ZZ$-modules to itself and let $\alpha\colon Q \to P$ be
a polynomial transformation; see Definition~\ref{de:PolyTrans}. Define
the closed subset $X$ of $\AA_P$ as the closure of the image of
$\alpha$. Specifically, for a natural number $n$, the pull-back
along $\alpha_{\ZZ^n}$ defines a ring homomorphism $\ZZ[P(\ZZ^n)]
\to \ZZ[Q(\ZZ^n)]$, and $X(\ZZ^n)$ is the closed subset of $\Spec
\ZZ[P(\ZZ^n)]$ defined by the kernel of that ring homomorphism.
Theorem~\ref{thm:Main} implies the following. 

\begin{cor} \label{cor:OverZ}
There exists a uniform bound $d$ such that for all $n \in \ZZ_{\geq 0}$
and all fields $K$, $X(K^n) \subseteq K \otimes P(\ZZ^n)$ is defined by
polynomials of degree $\leq d$.
\end{cor}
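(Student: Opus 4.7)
The plan is to apply Theorem~\ref{thm:Main} to $R=\ZZ$ (whose spectrum is Noetherian) and the polynomial functor $P$, extracting the uniform degree bound from the stabilisation of an appropriate descending chain in $\AA_P$. For each integer $d\geq 0$, I will construct a closed subset $X_d$ of $\AA_P$ that is cut out level-wise by the polynomial laws of degree at most $d$ in the defining ideal of $X$. The chain $X_0\supseteq X_1\supseteq \cdots$ is descending with $\bigcap_d X_d=X$, so Theorem~\ref{thm:Main} forces it to stabilise at some $d_0$, whence $X=X_{d_0}$ is already cut out by polynomial laws of degree $\leq d_0$; base-changing to a field $K$ then yields the desired bound on $X(K^n)$.

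Concretely, for each $U\in\fgfMod_\ZZ$, write $I(U):=\ker(\alpha_U^*\colon\ZZ[P(U)]\to\ZZ[Q(U)])$ for the ideal of polynomial laws cutting out $X(U)$, and let $I(U)_{\leq d}$ denote its degree $\leq d$ part with respect to the grading on $\ZZ[P(U)]$; set $X_d(U):=V(I(U)_{\leq d})\subseteq\AA_{P(U)}$. The first step will be verifying that $X_d$ is genuinely a closed subset of $\AA_P$. For this, I would show that for any $\ZZ$-domain $A$, any $A$-point $(\psi,q)\in\Hom_A(U_A,V_A)\times P(U)_A$ with $q\in X_d(U)(A)$, and any $g\in I(V)_{\leq d}$, the composite $g\circ P(\psi)$ is a polynomial law on $P(U)_A$ of degree $\leq d$ (because $P(\psi)$ is $A$-linear) that lies in $A\otimes I(U)_{\leq d}$; the containment in $I(U)_A$ uses the naturality identity $P(\psi)\circ\alpha_{U,A}=\alpha_{V,A}\circ Q(\psi)$, which gives $g\circ P(\psi)\circ\alpha_{U,A}=g\circ\alpha_{V,A}\circ Q(\psi)=0$. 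Hence $g(P(\psi)(q))=0$, so $P(\psi)(q)\in X_d(V)(A)$, exactly as required by the closed-subset definition.

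Given this, the chain $X_0\supseteq X_1\supseteq\cdots$ satisfies $\bigcap_d X_d=X$ (since the grading on $\ZZ[P(U)]$ gives $I(U)=\bigcup_d I(U)_{\leq d}$), and Theorem~\ref{thm:Main} produces a $d_0$ with $X=X_{d_0}$. For each $n\geq 0$ and each field $K$---which is a $\ZZ$-domain, as only the morphisms of $\mathbf{Dom}_\ZZ$ and not the structure maps are required to be injective---the subset $X(K^n)=X(\ZZ^n)(K)\subseteq K\otimes P(\ZZ^n)$ is cut out by $K\otimes I(\ZZ^n)_{\leq d_0}$; since $P(\ZZ^n)$ is free, these are ordinary polynomials of degree $\leq d_0$ in coordinates on $K\otimes P(\ZZ^n)$. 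I expect the main obstacle to be this first functoriality check---unwinding the closed-subset definition in terms of the polynomial law $\Hom(U,V)\times P(U)\to P(V)$ and making the linearity-plus-naturality argument rigorous at the level of polynomial laws rather than ordinary set-theoretic maps; once that bookkeeping is in place, the rest is a direct invocation of Theorem~\ref{thm:Main}.
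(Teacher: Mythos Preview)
The paper states this corollary without proof, as a direct consequence of Theorem~\ref{thm:Main}; your argument is precisely the natural way to extract a uniform degree bound from Noetherianity, and it is correct. The functoriality check you flag as the main obstacle does go through for the reasons you give: expanding as in the proof of Lemma~\ref{lm:biggestclosed1}, the polynomial laws $g_\alpha\in\ZZ[P(U)]$ obtained from $g\in I(V)_{\le d}$ lie in $I(U)$ by the naturality identity $P_{U,V,A}(\phi)\circ\alpha_{U,A}=\alpha_{V,A}\circ Q_{U,V,A}(\phi)$ together with $g\circ\alpha_V=0$, and they have ordinary degree at most $\deg g\le d$ since $P_{U,V,A}(\phi)$ is $A$-linear; hence $g_\alpha\in I(U)_{\le d}$, which is exactly what you need.
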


This corollary has many applications; here is one. If $V$ is a
finite-dimensional vector space over a field $K$ and $T \in V \otimes V
\otimes V$ is a tensor, then $T$ is said to have {\em slice rank} 
$\leq r$ if $T$ can be written as the sum of $r$ terms of the form $\sigma(v
\otimes A)$, where $v \in V$ and $A \in V \otimes V$, and $\sigma$ is
a cyclic permutation of $1,2,3$ permuting the tensor factors. If $K$
is algebraically closed, then being of slice rank $\leq r$ is a
Zariski-closed condition \cite{sawin-tao}.

\begin{cor} \label{cor:SliceRank}
Fix a natural number $r$. There exists a uniform bound $d$ such that for
all algebraically closed fields $K$ and for all $n \in \ZZ_{\geq 0}$,
the variety of slice-rank-$\leq r$ tensors in $K^n \otimes K^n
\otimes K^n$ is defined by polynomials of degree $\leq d$.
\end{cor}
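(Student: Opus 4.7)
The plan is to realize the slice-rank-$\leq r$ locus as a finite union of image closures of polynomial transformations between $\ZZ$-polynomial functors, and then apply Corollary~\ref{cor:OverZ}. Take $P$ to be the polynomial functor $V\mapsto V\otimes V\otimes V$ over $\ZZ$. For each composition $(r_1,r_2,r_3)\in\ZZz^3$ with $r_1+r_2+r_3=r$, define
\[
 Q^{(r_1,r_2,r_3)}(V) := \bigoplus_{j=1}^3\bigl(V\oplus(V\otimes V)\bigr)^{\oplus r_j},
\]
and let $\alpha^{(r_1,r_2,r_3)}\colon Q^{(r_1,r_2,r_3)}\to P$ be the polynomial transformation sending $((v_{j,i},A_{j,i}))_{j,i}$ to $\sum_{j,i}\sigma_j(v_{j,i}\otimes A_{j,i})$, where $\sigma_j$ is the cyclic permutation of the three tensor factors placing the vector factor in the $j$-th slot. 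Let $X^{(r_1,r_2,r_3)}\subseteq\AA_P$ be the closure over $\ZZ$ of the image of $\alpha^{(r_1,r_2,r_3)}$, and set $Y:=\bigcup_{r_1+r_2+r_3=r} X^{(r_1,r_2,r_3)}$, a finite union indexed by the $N:=\binom{r+2}{2}$ compositions of $r$ into three non-negative parts.

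Corollary~\ref{cor:OverZ}, applied to each of the finitely many $\alpha^{(r_1,r_2,r_3)}$, yields a uniform bound $d_0$ such that every $X^{(r_1,r_2,r_3)}(K^n)$ is defined by polynomials of degree $\leq d_0$ over every field $K$ and every $n$. Taking products of one defining equation from each of the $N$ constituents shows that $Y(K^n)$ is defined by polynomials of degree $\leq N d_0$, uniformly in $K$ and~$n$. Assuming we can identify $Y(K^n)$ with the slice-rank-$\leq r$ variety whenever $K$ is algebraically closed, this proves the corollary with $d=Nd_0$.

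For the identification, one inclusion is immediate: a slice-rank-$\leq r$ tensor $\sum_{i=1}^r\sigma_{j_i}(v_i\otimes A_i)$, regrouped by permutation type (and padded with zeros), lies in the image of some $\alpha^{(r_1,r_2,r_3)}_{K^n}$, and hence in $X^{(r_1,r_2,r_3)}(K^n)\subseteq Y(K^n)$. For the reverse inclusion, each image of $\alpha^{(r_1,r_2,r_3)}_{K^n}$ consists of tensors of slice rank $\leq r_1+r_2+r_3=r$, and by the Sawin--Tao theorem~\cite{sawin-tao} the slice-rank-$\leq r$ locus is Zariski closed, so the closure of every such image lies within it.

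The principal technical point---the main obstacle---is to verify that $X^{(r_1,r_2,r_3)}(K^n)$, which is a priori the vanishing locus over $K$ of the $\ZZ$-kernel ideal of $\alpha^{(r_1,r_2,r_3)}$, actually coincides with the Zariski closure of the image of $\alpha^{(r_1,r_2,r_3)}_{K^n}$ in positive characteristic; this is the step needed to conclude the reverse inclusion $Y(K^n)\subseteq\{\text{slice rank}\leq r\}$. In characteristic zero this is automatic, since $K$ is flat over $\ZZ$. In positive characteristic, it should follow from the observation that the coordinate ring of the image over $\ZZ$ embeds into the polynomial ring $\ZZ[Q^{(r_1,r_2,r_3)}(\ZZ^n)]$ and is therefore $\ZZ$-torsion-free, together with the multilinear naturality of $\alpha^{(r_1,r_2,r_3)}$, which enables the relevant base-change computations to pass to fields of arbitrary characteristic.
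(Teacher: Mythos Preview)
Your argument is essentially the paper's own proof: set up the slice-rank locus as a finite union of image closures of polynomial transformations $Q^r\to P$ over $\ZZ$ and invoke Corollary~\ref{cor:OverZ}. The only cosmetic difference is that you index the pieces by compositions $(r_1,r_2,r_3)$ of $r$ (giving $\binom{r+2}{2}$ pieces) whereas the paper indexes by $r$-tuples of cyclic permutations (giving $3^r$ pieces); your count yields a slightly better constant but the mechanism is identical.

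Where you go beyond the paper is in your final paragraph: you correctly isolate the one non-obvious step, namely that the base change $X^{(r_1,r_2,r_3)}(K^n)$ of the $\ZZ$-defined closure actually coincides with the Zariski closure over $K$ of $\im\alpha^{(r_1,r_2,r_3)}_{K^n}$ when $\cha K>0$. The paper simply asserts that ``the variety of slice-rank-$\leq r$ tensors is the union of these image closures'' without comment, so you have been more careful than the original. However, your proposed justification is not quite enough. Torsion-freeness of $B:=\ZZ[P(\ZZ^n)]/\ker\alpha^*$ (which follows from $B\hookrightarrow\ZZ[Q(\ZZ^n)]$) only guarantees that $K\otimes\ker\alpha^*\hookrightarrow K[P(K^n)]$; to conclude $\ker(\alpha^*_K)=K\otimes\ker\alpha^*$ you would need $K\otimes B\hookrightarrow K[Q(K^n)]$, i.e.\ torsion-freeness of the cokernel $\ZZ[Q(\ZZ^n)]/B$, which your argument does not establish. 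The toy example $\alpha^*\colon\ZZ[y]\to\ZZ[x]$, $y\mapsto 2x$ shows that torsion-freeness of $B$ alone can fail to give the conclusion: here $\ker\alpha^*=0$ so $X(K)=\AA^1_K$, yet over $K=\overline{\FF_2}$ the image of $\alpha_K$ is $\{0\}$. Your appeal to ``multilinear naturality'' gestures at why such degeneracies should not occur for the slice-rank maps, but does not pin it down.
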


The same holds when the number of tensor factors is increased to
any fixed number, possibly at the expense of increasing $d$, and
similar results hold for the set of cubic forms of bounded q-rank
\cite{derksen-eggermont-snowden} or for the closure of the set of
degree-$e$ forms of bounded strength in the sense of \cite{ananyan-hochster}.
We stress, however, that ``defined by'' is intended in a purely
set-theoretic sense. We do not know whether the vanishing ideals of
these varieties are generated in bounded degree, even if the field $K$
were fixed beforehand.

\begin{proof}[Proof of Corollary~\ref{cor:SliceRank}.]
Consider the polynomial functor $P$ that sends a free
$\ZZ$-module $\ZZ^n$
to $\ZZ^n \otimes \ZZ^n \otimes \ZZ^n$, and the polynomial
functor $Q$ that sends $\ZZ^n$
to $\ZZ^n \oplus (\ZZ^n \otimes \ZZ^n)$. For any $r$-tuple
$(\sigma_1,\ldots,\sigma_r)$
of cyclic permutations of $1,2,3$ we have a polynomial transformation
\[ Q^r \to P, ((v_1,A_1),\ldots,(v_r,A_r)) \mapsto \sum_{i=1}^r
\sigma_i(v_i \otimes A_i), \]
whose image closure is defined in uniformly bounded degree $e$ by
Corollary~\ref{cor:OverZ}. The variety of slice-rank-$\leq r$ tensors is
the union of these image closures over all $r$-tuples of cyclic permutations,
hence defined in degree at most $e \cdot 3^r$, independently of 
the algebraically closed field and independently of $n$. 
\end{proof}

\begin{re} \label{re:ZForms}
Over a field $K$ of characteristic zero, the irreducible polynomial
functors~$P$ are precisely the Schur functors, and any polynomial
functor is isomorphic to a direct sum of Schur functors. These always admit
a $\ZZ$-form, i.e., a polynomial functor $P_{\ZZ}$ over $\ZZ$ such that $K
\otimes P_{\ZZ} \cong P$, which moreover has the property that it maps free
$\ZZ$-modules to free $\ZZ$-modules \cite{akin-buchsbaum-weyman}. The $\ZZ$-form need not be
unique; e.g., the Schur functor over $K$ that maps $V$ to its
$d$-th symmetric power $S^d V$, comes both from the functor from free $\ZZ$-modules
to free $\ZZ$-modules that sends $U$ to $S^d U$ and from the functor
that sends $U$ to the sub-$\ZZ$-module of $U^{\otimes d}$ consisting of
symmetric tensors. These two functors are non-isomorphic $\ZZ$-forms.
In applications such as the above, where one looks for field-independent
bounds, it is important to choose the $\ZZ$-form that captures the
problem of interest.
\end{re}

\begin{ex}
Again over $R=\ZZ$, consider the polynomial transformation $\alpha\colon
(S^2)^4 \to S^4$ that maps a quadruple $(q_1,\ldots,q_4)$
of quadratic forms to $q_1^2 + \cdots + q_4^2$. Let $X$ be the image
closure as above. If $K$ is algebraically closed of characteristic zero,
then $X_K(K^4)$ is a hypersurface in $S^4(K)$ of degree 38475
\cite{Blekherman12},
so the degree bound from Corollary~\ref{cor:OverZ} must be at least
that large. On the other hand, if $K$ is algebraically closed of
characteristic $2$, then the image of $\alpha$ is just the linear space
spanned by all degree-four monomials that are squares, and hence only
linear equations are needed to cut out this image.
\end{ex}

\begin{re}
Over algebraically closed fields of positive characteristic, irreducible
polynomial functors are still parameterised by partitions, but
polynomial functors are no longer semisimple, and the $\ZZ$-forms
from Remark~\ref{re:ZForms} do not always remain irreducible; standard
references are \cite{carter-lusztig,green}. The typical example is that, in
characteristic $p$, the functor $S^p$ contains a subfunctor that maps $V$
to the linear space of $p$-th powers of elements of $V$.
\end{re}

\subsection{Further relations to the literature}

The polynomial functors that we study are often referred to as strict
polynomial functors in the literature; we will drop the adjective
``strict''. We do not know whether the polynomial functors over finite
fields studied in \cite{pirashvili} admit a similar theory.

Much literature on polynomial functors is primarily concerned with
representation theory, whereas our emphasis is on the geometry/commutative
algebra of closed subsets in such polynomial functors.

We will use work of Roby on polynomial laws \cite{roby} and work
of Touz\'e on polynomial functors \cite{touze}---but indeed only
more elementary parts of their work, such as the generalisation of
Friedlander-Suslin's \cite[Theorem 3.2]{friedlander-suslin} to general
base rings $R$; see \cite[Th\'eor\`eme 7.2]{touze}.

The paper \cite{erman-sam-snowden3} establishes finiteness results
for (cone-stable and weakly upper semi-continuous) ideal invariants in
polynomial rings over a fixed field. As Erman pointed out to us, at least
part of their results carry over to arbitrary base rings with Noetherian
spectrum. In particular, Erman-Sam-Snowden establish the Noetherianity of
a space $Y_{\bd}$ that parameterises homogeneous ideals generated in degrees
$\bd=(d_1,\ldots,d_r)$. While they work with certain limit
spaces, the ``functor analogue'' of their $Y_{\bd}$ in our setting would be a
functor from $\fgfModR$ to the category of functors from $\DomR$ to sets
that sends a finitely generated free $R$-module $U=R^n$ to the functor
that maps an $R$-domain $D$ to the set of $\GL_n(D)$-orbits of ideals
in $R[x_1,\ldots,x_n]$ generated by homogeneous polynomials of degrees
$d_1,\ldots,d_r$. Then $Y_{\bd}$ admits a surjective map from the space
$\AA_{S^{d_1} \oplus \cdots \oplus S^{d_r}}$---a functor from $\fgfModR$
to functors from $\DomR$ to topological spaces, and one can give $Y_{\bd}$
the quotient topology. Theorem~\ref{thm:Main} implies that $Y_{\bd}$
is then Noetherian, provided that $\Spec(R)$ is Noetherian.

Our work does not say much about Noetherianity of the coordinate rings
$R[\AA_P]$, let alone about Noetherianity of finitely generated modules
over them. Currently, these much stronger results are known only when
$R$ is a field of characteristic zero and $P$ is either a direct
sum of copies of $S^1$ \cite{sam-snowden1,sam-snowden2} or $P=S^2$
or $P=\bigwedge^2$ \cite{nagpal-sam-snowden} or $P=S^1 \oplus S^2$
or $P=S^1 \oplus \bigwedge^2$ \cite{sam-snowden3}.

Like Ananyan-Hochster's work \cite{ananyan-hochster}, recent work by
Kazhdan and Ziegler \cite{kazhdan-ziegler1,kazhdan-ziegler2} implies
that polynomials of high strength, and high-strength sequences of
polynomials, behave very much like generic polynomials or sequences.
Like Corollary~\ref{cor:OverZ}, their results are uniform in the
characteristic of the field.  But the route that Kazhdan and Ziegler take
is entirely different: first a theorem is proved over finite fields
by algebraic-combinatorial means, with uniform constants that do not
depend on the finite field, and then model theory is used to transfer
the result to arbitrary algebraically closed fields.

In \cite{bik-draisma-eggermont} it is shown that in any closed subset
of the polynomial functor $S^d$ defined over $\ZZ$, the strength of
polynomials over a ground field of characteristic $0$ or characteristic
$>d$ is uniformly bounded from above. While of a similar flavour as
Corollary~\ref{cor:OverZ}, that result---in which the restriction on
the characteristic cannot be removed---does not follow from our current
work. Far-reaching generalisations of \cite{bik-draisma-eggermont},
but only over fields of characteristic zero, are the topic of the 
forthcoming preprint \cite{bik-draisma-eggermont-snowden}. 

\section{Preliminaries}\label{sec:preliminary}

\subsection{Rings and algebras}
Throughout the paper, all rings are commutative and with $1$ and ring
homomorphisms are required to be unital. We fix a ring $R$, and
if $\fp$ is a prime ideal in $R$, then we write $K_\fp$ for the fraction field
of the domain $R/\fp$. If $R$ is a domain, then we write $K:=K_{(0)}$ for
the fraction field of $R$. 

An $R$-algebra is an (unless otherwise stated) commutative ring with a
homomorphism from $R$ into it; an $R$-algebra homomorphisms from an $A$
to $B$ is a ring homomorphism $A \to B$ such that composition of the
homomorphisms $R \to A \to B$ is the prescribed homomorphism $R \to
B$. Except where specified otherwise, tensor products are over $R$,
$\Hom(U,V)$ is the $R$-module of $R$-module homomorphisms from $U$
to~$V$, and $U^*=\Hom(U,R)$. We use the terms $R$-domain and $R$-field
for $R$-algebras that, as rings, are domains and fields, respectively.

\subsection{From finitely generated to free modules.\!} \label{ssec:Modules}
The following lemma, which we will later generalise to
polynomial functors, is well-known; we give a proof for
completeness.

\begin{lm} \label{lm:GenericFreeness}
Let $R$ be a domain, let $M$ be a finitely generated $R$-module, and
let $N$ be a submodule of $M$. Then there exists a nonzero $r \in R$
and elements $v_1,\ldots,v_n \in N$ such that $R[1/r] \otimes N$ is a
finitely generated free submodule of $R[1/r] \otimes M$ with basis $1
\otimes v_1, \ldots, 1 \otimes v_n$, and such that $R[1/r] \otimes M$
is the direct sum of $R[1/r] \otimes N$ and another free $R[1/r]$-module.
\end{lm}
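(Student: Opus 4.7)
The plan is to prove this by passing to the fraction field $K = \Frac(R)$ first, reading off the desired bases there, and then clearing denominators to localize.

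First I would observe that $K \otimes M$ is a finite-dimensional $K$-vector space, and $K \otimes N$ is a subspace. Choose $v_1, \ldots, v_n \in N$ so that $1 \otimes v_1, \ldots, 1 \otimes v_n$ form a $K$-basis of $K \otimes N$ (we may take such elements because the set $\{1 \otimes v : v \in N\}$ $K$-spans $K \otimes N$). Next, extend to a $K$-basis of $K \otimes M$ by choosing $w_1, \ldots, w_k \in M$ so that $1 \otimes v_1, \ldots, 1 \otimes v_n, 1 \otimes w_1, \ldots, 1 \otimes w_k$ form a basis.

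Second, fix generators $m_1, \ldots, m_s$ of $M$ as an $R$-module and expand each $1 \otimes m_i$ in the chosen $K$-basis. Let $r \in R$ be a nonzero common denominator of all the coefficients appearing in these expansions. Setting $R' := R[1/r]$, every $1 \otimes m_i$ is then an $R'$-linear combination of the $1 \otimes v_j$ and $1 \otimes w_l$, so these $n+k$ elements generate $R' \otimes M$; because they remain $K$-linearly independent, they are an $R'$-basis. Hence $R' \otimes M$ is free, and the $R'$-span of the $1 \otimes w_l$ is a free $R'$-complement of the $R'$-span $N'$ of the $1 \otimes v_j$.

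The step that needs some care is showing $R' \otimes N = N'$ inside $R' \otimes M$. Here I would use that localization is flat, so the natural map $R' \otimes N \to R' \otimes M$ is injective, and similarly $R' \hookrightarrow K$. For any $x \in N$, write $1 \otimes x = \sum_j \alpha_j (1 \otimes v_j) + \sum_l \beta_l (1 \otimes w_l)$ with $\alpha_j, \beta_l \in R'$. Viewing this equation in $K \otimes M$, and noting that $1 \otimes x \in K \otimes N$ is a $K$-combination of the $1 \otimes v_j$ alone, the $\beta_l$ must vanish in $K$ and hence in $R'$. Thus $1 \otimes x \in N'$, and since the elements $1 \otimes x$ with $x \in N$ generate $R' \otimes N$, we obtain $R' \otimes N \subseteq N'$; the reverse inclusion is immediate.

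The main obstacle I anticipate is precisely this last equality: it is what forces us to work with submodules inside the ambient free module $R' \otimes M$, rather than trying to describe $R' \otimes N$ intrinsically. The argument is clean once one exploits the inclusion $R' \hookrightarrow K$ to rule out the $\beta_l$ contributions, but without flatness of localization (ensuring that $R' \otimes N$ really does sit inside $R' \otimes M$) the identification would fail.
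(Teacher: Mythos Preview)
Your proposal is correct and follows essentially the same route as the paper's proof: pass to $K=\Frac(R)$, pick a $K$-basis of $K\otimes N$ coming from elements of $N$, extend to a $K$-basis of $K\otimes M$ by elements of $M$, clear denominators against a generating set of $M$ to obtain $r$, and then use the injection $R[1/r]\hookrightarrow K$ to rule out nonzero coefficients on the complementary basis vectors when identifying $R[1/r]\otimes N$ inside $R[1/r]\otimes M$. The paper phrases the last step slightly differently but the argument is the same; your explicit invocation of flatness of localization to justify $R[1/r]\otimes N\hookrightarrow R[1/r]\otimes M$ is a point the paper leaves implicit.
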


Note that tensoring with $K$ yields that $n=\dim_{K}(K \otimes N)$.

\begin{proof}
The vector space $K\otimes N$ is contained in the finite-dimensional
vector space $K \otimes M$. Hence there exist $v_1,\ldots,v_n \in N$ such
that $1 \otimes v_1, \ldots, 1 \otimes v_n$ is a basis of $K \otimes N$,
and $v_{n+1}, \ldots, v_m \in M$ such that $1 \otimes v_{n+1}, \ldots,
1 \otimes v_m$ is a basis of a complement of $K \otimes N$ in $K \otimes
M$. We claim that both statements hold with $K$ replaced by $R[1/r]$
for some nonzero $r$.

To see this, extend $v_1,\ldots,v_m$ with $v_{m+1},\ldots,v_l$ to a
generating set of the $R$-module $M$. Then for each $j=m+1,\ldots,l$
we have, in $K \otimes M$,
\[ 
1 \otimes v_j=\sum_{i=1}^m c_{ij} \otimes v_i 
\]
for certain coefficients $c_{ij} \in K$. This identity means
that there exists a non-zero element $r\in R$ and suitable coefficients
$c_{ij}$'s in $R$ such that 
\[ 
1 \otimes v_j = \sum_{i=1}^m (c'_{ij}/r) \otimes v_i 
\]
holds in $R[1/r] \otimes M$. 
Hence $R[1/r] \otimes M$ is generated
by $1 \otimes v_1, \ldots, 1 \otimes v_m$, and these elements do not
have any nontrivial linear relation over $R[1/r]$ since
their images in $K \otimes M$ do not
satisfy any such relation over $K$. It follows that $R[1/r] \otimes M$
is free with basis $1 \otimes v_1,\ldots,1 \otimes v_m$.
Furthermore, $R[1/r] \otimes N$ contains the
$R[1/r]$-module spanned by $1 \otimes v_1,\ldots,1 \otimes
v_n$; and conversely, if $v \in R[1/r] \otimes M$ is an element of $R[1/r] \otimes N$, then it
cannot have a nonzero coefficient on any of the last $m-n$ basis elements,
because in $K \otimes M$ the image of $v$ is a linear
combination of the first $m$ basis elements and the basis
elements do not satisfy any linear relation there. Hence $R[1/r]
\otimes N \subseteq R[1/r] \otimes M$ is free with basis $1 \otimes v_1,\ldots,1
\otimes v_n$.
\end{proof}

\section{Polynomial laws and the coordinate ring of a module}
\label{sec:Polynomial laws}

\subsection{Polynomial laws}
We follow \cite[Chapter 1]{roby}. Let $M,N$ be $R$-modules. Denote by $\AlgR$ the category of $R$-algebras.

\begin{de}
A {\em polynomial law} $\phi\colon M \to N$ is a collection of
maps 
\[
(\phi_A:A \otimes M \to A \otimes N)_{A\in\AlgR}
\]
such that for every $R$-algebra homomorphism $\alpha\colon A \to B$ the following
diagram commutes:
\[ 
\xymatrix{ A \otimes M \ar[r]^{\phi_A} \ar[d]_{\alpha
\otimes \id_M} & A \otimes N
\ar[d]^{\alpha \otimes \id_N} \\
B \otimes M \ar[r]^{\phi_B} \ar[r] & B \otimes N.}
\]
\end{de}

\begin{ex}
Suppose that $M$ and $N$ are the free modules $R^2$ and $R$,
respectively, so that $A \otimes M$ and $A \otimes N$ are canonically
identified with $A^2$ and $A$. Then the collection $(\phi_A)_A$
defined by $\phi_A(x,y)=xy+y^2$ for $x,y \in A$ is a polynomial law $M
\to N$,
and indeed one that is {\em homogeneous} of degree $2$ in the sense of Definition
\ref{de:Homogeneous} below.
\end{ex}

More generally, the name polynomial law derives from the following fact.

\begin{lm} \label{lm:Injection}
Consider two $R$-modules $M$ and $N$. Suppose that $M$ is finitely generated and let $\{v_1,\ldots,v_n\}$ be a set of generators. Let $\phi\colon M\to N$ be a polynomial law. Then $\phi$ is completely determined by the element:
\[ 
\iota(\phi):=\phi_{R[x_1,\ldots,x_n]}(x_1 \otimes v_1 + \cdots + x_n \otimes
v_n)  \in R[x_1,\ldots,x_n] \otimes N. 
\]
This gives an injective map $\iota$ from the collection of polynomial laws from $M$ to $N$ to the module $R[x_1,\ldots,x_n] \otimes N$.
In the case where $M$ is free with basis $v_1,\ldots,v_n$,
this injective map is a bijection.
\end{lm}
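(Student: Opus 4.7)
The plan is to prove injectivity by using naturality applied to a universal specialization map out of $R[x_1,\ldots,x_n]$, and then, in the free case, invert this construction to produce a polynomial law from any prescribed $F \in R[x_1,\ldots,x_n]\otimes N$.

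First I would prove injectivity. Fix an arbitrary $R$-algebra $A$ and any element $w \in A \otimes M$. Because $v_1,\ldots,v_n$ generate $M$ as an $R$-module, the elements $1\otimes v_i$ generate $A \otimes M$ as an $A$-module, so I can write $w = \sum_{i=1}^n a_i\otimes v_i$ for some $a_i \in A$. Consider the $R$-algebra homomorphism $\alpha\colon R[x_1,\ldots,x_n] \to A$ determined by $x_i \mapsto a_i$. Applying the naturality square of $\phi$ to $\alpha$ and to the element $x_1\otimes v_1+\cdots+x_n\otimes v_n$ yields
\[
\phi_A(w) \;=\; (\alpha\otimes\id_N)\bigl(\iota(\phi)\bigr).
\]
Thus $\iota(\phi)$ determines $\phi_A(w)$ for every $A$ and every $w$, which is exactly the asserted injectivity.

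Next I would prove surjectivity in the case that $M$ is free on $v_1,\ldots,v_n$. Given any $F \in R[x_1,\ldots,x_n]\otimes N$, I define candidate maps $\phi_A\colon A\otimes M \to A\otimes N$ by the same recipe: for $w \in A\otimes M$, write $w=\sum a_i\otimes v_i$ (now uniquely, using $A\otimes M \cong A^n$), let $\alpha\colon R[x_1,\ldots,x_n]\to A$ send $x_i \mapsto a_i$, and put $\phi_A(w):=(\alpha\otimes\id_N)(F)$. Well-definedness uses freeness in a crucial way. Naturality follows by observing that for any $R$-algebra homomorphism $\beta\colon A\to B$, the element $(\beta\otimes\id_M)(w)$ has coefficients $\beta(a_i)$, so its associated algebra map is $\beta\circ\alpha$; functoriality of $\otimes$ then gives $\phi_B\circ(\beta\otimes\id_M)=(\beta\otimes\id_N)\circ\phi_A$. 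Taking $A=R[x_1,\ldots,x_n]$ and $w=\sum x_i\otimes v_i$ makes $\alpha=\id$, so $\iota(\phi)=F$.

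There is no real obstacle here; the argument is essentially a Yoneda-style observation that $\sum x_i\otimes v_i$ is a universal element. The only subtlety worth highlighting is why surjectivity fails without freeness: when $v_1,\ldots,v_n$ merely generate $M$, the decomposition $w=\sum a_i\otimes v_i$ is not unique, so the formula $\phi_A(w)=(\alpha\otimes\id_N)(F)$ is not well-defined for every $F$. The image of $\iota$ then consists of those $F$ compatible with the relations among the $v_i$ upon specialization, but this refinement is not needed for the statement of the lemma.
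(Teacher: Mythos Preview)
Your proof is correct and follows essentially the same approach as the paper: both use the specialization homomorphism $\alpha\colon R[x_1,\ldots,x_n]\to A$, $x_i\mapsto a_i$, together with naturality to establish injectivity, and then in the free case define $\phi_A$ by the same formula to obtain surjectivity. Your version is slightly more explicit in verifying naturality of the constructed $\phi$ and in explaining why freeness is needed, but the underlying argument is identical.
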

\begin{proof}
Let $A$ be an $R$-algebra, let $a_1,\ldots,a_n\in A$ be elements and let $\alpha\colon R[x_1,\ldots,x_n]\to A$ be the $R$-algebra homomorphism sending $x_i\mapsto a_i$. Then the diagram associated to $\alpha$ shows that $\phi_A(a_1\otimes v_1+\cdots+a_n\otimes v_n)=(\alpha\otimes\id_N)\iota(\phi)$ and hence $\iota$ is injective. If $M$ is free with basis $v_1,\ldots,v_n$, then $\phi_A(a_1\otimes v_1+\cdots+a_n\otimes v_n)=\sum_jf_j(a_1,\ldots,a_n)\otimes w_j$ defines a polynomial law $\phi\colon M\to N$ for every $\sum_j f_j\otimes w_j\in R[x_1,\ldots,x_n]\otimes N$.
\end{proof}

\begin{ex} 
If $R$ is an infinite field, then a polynomial law $\phi$ from $M=R^n$
to $N=R^m$ is in fact uniquely determined by $\phi_R$, which is required to be
a polynomial map, i.e., a map all of whose $m$ coordinate functions are
polynomials in the $n$ coordinates on $M$. So then the set of polynomial
laws from $M$ to $N$ is precisely the set of polynomial maps from the
vector space $M$ to the vector space $N$.

For a general ring $R$, we denote by $\AA_R^n$ the affine scheme
$\Spec(R[x_1, \cdots, x_n])$. The set of polynomial laws from $R^n$ to $R^m$
is the set of morphisms $\AA_R^n \to \AA_R^m$ defined over $R$.
Of
course, such a morphism need not be determined by its map $\phi_R\colon R^n
\to R^m$, but it {\em is} determined by the maps $\phi_A\colon A^n \to A^m$
for all $R$-algebras $A$. This motivates the definition of
polynomial laws.
\end{ex}

\begin{de} \label{de:Homogeneous}
A polynomial law $\phi\colon M \to N$ is {\em homogeneous of degree $d$} 
if for each $R$-algebra $A$ and all $a \in A, m \in A \otimes M$, we have
$\phi_A(am)=a^d \phi_A(m)$. 
\end{de}

Writing $R[x_1,\ldots,x_n]_d$ for the set of homogeneous polynomials
of degree $d$, we see that the injection from Lemma~\ref{lm:Injection}
maps a homogeneous polynomial law $M \to N$ of degree $d$ to an element
of $R[x_1,\ldots,x_n]_d \otimes N$.

\begin{prop}\label{prop:composition}
Let $M_1,\ldots,M_d,N$ be $R$-modules and let $\phi\colon M_1\times\cdots\times M_d\to N$ be a multilinear map. Then $\phi$ extends to a homogeneous polynomial law of degree $d$ (also denoted $\phi$). After identifying $A\otimes(M_1\times\cdots\times M_d)\cong A\otimes M_1\times\cdots\times A\otimes M_d$, we have
$$
\phi_A\left(\sum_{i_1}a_{i_1}\otimes m_{i_1},\ldots,\sum_{i_d}a_{i_d}\otimes m_{i_d}\right)=\sum_{i_1,\ldots,i_d}a_{i_1}\cdots a_{i_d}\otimes\phi(m_{i_1},\ldots,m_{i_d})
$$
for all $R$-algebras $A$, $a_{i_1},\ldots,a_{i_d}\in A$ and $m_{i_1}\in M_1,\ldots,m_{i_d}\in M_d$.
\end{prop}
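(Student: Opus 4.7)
The plan is to construct $\phi_A$ intrinsically from the universal property of tensor products and then read off the stated formula together with the two polynomial-law axioms from the construction. Throughout I would use the canonical identification $A\otimes(M_1\times\cdots\times M_d)\cong(A\otimes M_1)\times\cdots\times(A\otimes M_d)$ coming from the fact that tensor product commutes with finite direct sums, so that $\phi_A$ may equivalently be regarded as a set-map from either side.

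First, multilinearity of $\phi$ yields a unique $R$-linear map $\bar\phi\colon M_1\otimes_R\cdots\otimes_R M_d\to N$ with $\bar\phi(m_1\otimes\cdots\otimes m_d)=\phi(m_1,\ldots,m_d)$. For each $R$-algebra $A$, I would define $\phi_A$ as the composition
\[
(A\otimes M_1)\times\cdots\times(A\otimes M_d)\xrightarrow{\mu}(A\otimes M_1)\otimes_A\cdots\otimes_A(A\otimes M_d)\xrightarrow{\sim}A\otimes(M_1\otimes\cdots\otimes M_d)\xrightarrow{\id_A\otimes\bar\phi}A\otimes N,
\]
where $\mu$ is the universal $A$-multilinear map, the middle arrow is the canonical isomorphism sending $(a_1\otimes m_1)\otimes\cdots\otimes(a_d\otimes m_d)$ to $a_1\cdots a_d\otimes m_1\otimes\cdots\otimes m_d$, and the last arrow is base change of $\bar\phi$. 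Tracking $(\sum_{i_1}a_{i_1}\otimes m_{i_1},\ldots,\sum_{i_d}a_{i_d}\otimes m_{i_d})$ through the composition recovers exactly the formula in the statement.

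Naturality in $A$ is then automatic: for an $R$-algebra homomorphism $\alpha\colon A\to B$, each of the three constituent arrows commutes with $\alpha\otimes\id$, because the universal $\mu$, the iso of iterated tensors, and the base change of $\bar\phi$ are all natural in the base algebra, and composing the three naturality squares gives the required square. Homogeneity of degree $d$ is equally quick: the $A$-multilinear map $\mu$ absorbs a scalar $a\in A$ out of each of its $d$ inputs into a single overall factor $a^d$, and the remaining two arrows are $A$-linear, so $\phi_A(a(x_1,\ldots,x_d))=a^d\phi_A(x_1,\ldots,x_d)$.

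The one point that genuinely requires care is well-definedness of the displayed formula: a direct attack would demand checking that the right-hand side is independent of the chosen expansions as finite sums of simple tensors and is $A$-linear in each argument. Routing through $\otimes_A$ sidesteps this bookkeeping entirely, since the universal property of the $A$-tensor product builds in precisely the relations needed. Beyond this implicit well-definedness, there is no substantive obstacle.
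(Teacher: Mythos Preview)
Your argument is correct. The paper's proof is a two-line affair that takes a slightly more direct route to well-definedness: it observes that the map $A^d\times M_1\times\cdots\times M_d\to A\otimes N$, $(a_1,\ldots,a_d,m_1,\ldots,m_d)\mapsto a_1\cdots a_d\otimes\phi(m_1,\ldots,m_d)$ is $R$-multilinear, so in each slot $(a_j,m_j)$ it descends to $A\otimes_R M_j$; this is exactly the ``direct attack'' you describe, only with the bookkeeping absorbed into one appeal to $R$-bilinearity per slot. Your detour through $(A\otimes M_1)\otimes_A\cdots\otimes_A(A\otimes M_d)\cong A\otimes(M_1\otimes\cdots\otimes M_d)$ and the base-changed $\bar\phi$ is equally valid and has the advantage that naturality in $A$ and degree-$d$ homogeneity become visibly structural rather than merely asserted, as they are in the paper. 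Either way the content is the same universal-property bookkeeping; neither approach has a real edge here.
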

\begin{proof}
The maps $\phi_A$ are well-defined as the maps $A^d\times M_1\times\cdots\times M_d\to A \otimes N$ sending $(a_1,\ldots,a_d,m_1,\ldots,m_d)\mapsto a_1\cdots a_d\phi(m_1\cdots m_d)$ are multilinear. The collection $(\phi_A)_A$ is a homogeneous polynomial law of degree $d$ and $\phi_R=\phi$.
\end{proof}

\begin{re}\label{rem:composition}
Composition of $R$-module homomorphisms is a bilinear map. By the proposition, we can thus view this operation as a polynomial law.
\end{re}

A homogeneous polynomial law $\phi\colon M \to N$ of degree $0$ is the same
thing as an element of $N$ (namely, the element $\phi_R(0)$, which equals
$\phi_A(m)$ for any $R$-algebra $A$ and any element $m \in A \otimes M$);
we call these polynomial laws {\em constant}. A homogeneous polynomial law
$M \to N$ of degree $1$ is the extension of an $R$-module homomorphism
$M \to N$ as in the proposition above (namely, the map $\phi_R\colon M \to N$, which 
in this case is $R$-linear and uniquely determines
$\phi_A$ for all $A \in \AlgR$); we call these polynomial laws {\em linear}. 

The following proposition says that, in many ways, polynomial laws behave
like ordinary polynomial maps between vector spaces. For proofs we refer
to \cite{roby}.

\begin{prop} \label{prop:PolyLaws}
Let $\phi,\psi\colon M \to N$, $\gamma\colon N \to O$ be polynomial laws between
$R$-modules.
\begin{enumerate}
\item The collection $\phi+\psi:=(\phi_A+\psi_A)_A$ is a polynomial law $M
\to N$, homogeneous of degree $d$ if $\phi,\psi$ are. 

\item We have $\phi=\sum_{d=0}^\infty \phi_d$ for unique polynomial laws $\phi_d\colon M
\to N$ of degree $d$, where for each $R$-algebra $A$ and each $m \in A
\otimes M$ we have $\phi_{d,A}(m)=0$ for all but finitely many $d$'s ($\phi_d$
is called the {\em homogeneous component} of $\phi$ of
degree $d$); moreover, if $M$ is finitely generated, then
only finitely many of the $\phi_d$ are nonzero. 

\item The collection $\gamma \circ \phi:=(\gamma_A \circ \phi_A)_A$ is a
polynomial law $M \to O$, homogeneous of degree $d \cdot e$ if
$\phi,\psi$ are homogeneous of degrees $d,e$, respectively. 

\item If $N=R$, then $\phi \cdot \psi=(m \mapsto \phi_A(m)
\psi_A(m))_A$ is a polynomial law $M \to R$, homogeneous of
degree $d+e$ if $\phi,\psi$ are homogeneous of degrees
$d,e$, respectively.
\end{enumerate}
\end{prop}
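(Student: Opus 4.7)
The plan is to verify each of the four parts directly from the defining naturality square of a polynomial law; parts (1), (3), and (4) are purely formal, while (2) requires the standard ``introduce a formal variable'' trick to extract the homogeneous components.

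For parts (1), (3), and (4), I would observe that each construction gives, for every $R$-algebra $A$, a set map into the correct target, and then check naturality in $A$. In each case naturality reduces to the fact that for any $R$-algebra homomorphism $\alpha \colon A \to B$ the induced map $\alpha \otimes \id$ is additive (and, when $N = R$, a ring homomorphism), so it commutes with pointwise addition, composition, and pointwise multiplication respectively; combined with the naturality of $\phi$, $\psi$, $\gamma$, this yields the required commutative square. The homogeneity claims are one-line substitutions using the hypothesis $\phi_A(am) = a^d \phi_A(m)$ (and its analogues for $\psi$ and $\gamma$), noting that in part (3) the stated hypothesis should refer to $\phi$ and $\gamma$ rather than $\phi$ and $\psi$.

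Part (2) is the substantive statement. The plan is as follows: for every $R$-algebra $A$ and every $m \in A \otimes M$, apply $\phi$ to the element $tm \in A[t] \otimes M$, identify $A[t] \otimes N$ with $(A \otimes N)[t]$, and write the resulting element uniquely as $\sum_{d \geq 0} t^d n_d$ with $n_d \in A \otimes N$ and all but finitely many $n_d$ equal to zero. Define $\phi_{d,A}(m) := n_d$. There are then three things to verify. First, that each collection $(\phi_{d,A})_A$ is itself a polynomial law: this follows by applying $\phi$'s naturality square to the $R$-algebra homomorphism $A[t] \to B[t]$ induced by any $\alpha \colon A \to B$, which preserves $t$-degree, so that coefficient extraction commutes with $\alpha \otimes \id_N$. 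Second, that $\phi_d$ is homogeneous of degree $d$: substitute $t \mapsto a$ for $a \in A$ in the $A[t]$-identity and compare coefficients. Third, that $\phi_A(m) = \sum_d \phi_{d,A}(m)$ with only finitely many nonzero summands: substitute $t \mapsto 1$. Uniqueness follows from uniqueness of polynomial expansion in $t$. Finally, when $M$ is finitely generated, Lemma~\ref{lm:Injection} identifies $\phi$ with a single element of $R[x_1,\ldots,x_n] \otimes N$, which has only finitely many nonzero total-degree components; hence only finitely many $\phi_d$ are nonzero.

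The step requiring the most care is the naturality of the extracted $\phi_d$: one must track how base change $\alpha \colon A \to B$ interacts with the auxiliary variable $t$, and this is the point at which the whole argument would collapse if $\phi$ were merely a compatible family of set maps rather than a polynomial law. As indicated in the remark preceding the proposition, all four parts are carried out in detail in \cite{roby}, so in the final write-up I would sketch the constructions above and refer to that treatise for the routine verifications.
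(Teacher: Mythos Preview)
Your proposal is correct and matches the paper's treatment, which simply refers the reader to \cite{roby} for all four parts without giving any argument; your sketch therefore already goes beyond what the paper provides. One small wording issue in part (2): to verify that $\phi_d$ is homogeneous of degree $d$ you should use the endomorphism $t \mapsto at$ of $A[t]$ (or work over $A[s,t]$ and then specialise $s$) rather than ``substitute $t \mapsto a$ and compare coefficients'', since after specialising $t$ to an element of $A$ there are no $t$-coefficients left to compare.
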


\begin{prop}
Let $\phi\colon M\oplus M'\to N$ be a polynomial law between $R$-modules. Then $\phi$ has a unique decomposition $\phi=\sum_{i,j=0}^\infty\phi_{(i,j)}$ such that $\phi_{(i,j)}\colon M\oplus M'\to N$ is a bihomogeneous polynomial law of degree $(i,j)$, i.e., after identifying $A\otimes(M\oplus M')\cong A\otimes M\oplus A\otimes M'$, we have $\phi_{(i,j),A}(am,bm')=a^ib^j\phi_{(i,j),A}(m,m')$ for all $R$-algebras $A$, $a,b\in A$, $m\in A\otimes M$ and $m'\in A\otimes M'$. Moreover, if $\phi$ is homogeneous of degree $d$, then $\phi_{(i,j)}=0$ for all $i+j\neq d$.
\end{prop}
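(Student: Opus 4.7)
The plan is to imitate the one-variable device used for the homogeneous decomposition in Proposition~\ref{prop:PolyLaws}(2), but with two auxiliary variables $s$ and $t$. For each $R$-algebra $A$ and each element of $A \otimes (M \oplus M') \cong (A \otimes M) \oplus (A \otimes M')$ corresponding to a pair $(m,m')$, evaluate $\phi$ on $sm + tm' \in A[s,t] \otimes (M \oplus M')$. Since the monomials $s^i t^j$ form an $A$-basis of $A[s,t]$, the resulting element of $A[s,t] \otimes N \cong \bigoplus_{i,j \geq 0} s^i t^j \otimes (A \otimes N)$ has a unique finite expansion $\sum_{i,j} s^i t^j \otimes n_{ij}(m,m')$, and I would set $\phi_{(i,j),A}(m,m') := n_{ij}(m,m')$.

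To check that $(\phi_{(i,j),A})_A$ is a polynomial law I would apply the naturality of $\phi$ along the induced homomorphism $\alpha[s,t]\colon A[s,t] \to B[s,t]$ for each $R$-algebra map $\alpha\colon A \to B$; since $\alpha[s,t]$ fixes $s$ and $t$, comparing coefficients of $s^i t^j$ transports naturality from $\phi$ to each $\phi_{(i,j)}$. Bihomogeneity follows from naturality along the substitution $s \mapsto as$, $t \mapsto bt$ in $A[s,t]$: this yields $\phi_{A[s,t]}(a s m + b t m') = \sum s^i t^j \otimes a^i b^j n_{ij}(m,m')$, which must also equal $\sum s^i t^j \otimes n_{ij}(am, bm')$ by the definition of $\phi_{(i,j)}$ applied to $(am, bm')$. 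Finally, naturality along the evaluation $A[s,t] \to A$, $s,t \mapsto 1$, gives $\phi_A(m,m') = \sum n_{ij}(m,m')$, which establishes that $\phi = \sum_{i,j} \phi_{(i,j)}$ as polynomial laws (well-defined pointwise because only finitely many terms are nonzero).

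Uniqueness works via the same comparison-of-coefficients trick: given any bihomogeneous decomposition $\phi = \sum \psi_{(i,j)}$, bihomogeneity forces $\psi_{(i,j),A[s,t]}(sm, tm') = s^i t^j \otimes \psi_{(i,j),A}(m,m')$, and freeness of $A[s,t]$ over $A$ pins down each $\psi_{(i,j),A}(m,m')$ as the corresponding coefficient of $\phi_{A[s,t]}(sm+tm')$, hence $\psi_{(i,j)} = \phi_{(i,j)}$. For the last claim, if $\phi$ is homogeneous of degree $d$, I would extend scalars further to $A[s,t,u]$: on one hand, naturality along $s \mapsto us$, $t \mapsto ut$ gives $\phi_{A[s,t,u]}(usm + utm') = \sum u^{i+j} s^i t^j \otimes n_{ij}(m,m')$, while on the other hand homogeneity of $\phi$ gives $\phi_{A[s,t,u]}(u(sm+tm')) = u^d \sum s^i t^j \otimes n_{ij}(m,m')$; since the monomials $u^k s^i t^j$ form a free $A$-basis, this forces $n_{ij}(m,m') = 0$ whenever $i+j \neq d$.

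I do not foresee a substantial obstacle, as the entire argument is parallel to the one-variable decomposition already stated as Proposition~\ref{prop:PolyLaws}(2). The only place demanding real care is the bookkeeping: one must consistently distinguish a polynomial law from its evaluations on algebras, and channel every statement about $\phi_{(i,j)}$ through an appropriate $R$-algebra homomorphism into or out of polynomial rings over $A$, so that uniqueness of coefficient expansions in $A[s,t] \otimes N$ (and $A[s,t,u] \otimes N$) can be invoked cleanly.
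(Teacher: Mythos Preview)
Your proposal is correct and follows essentially the same approach as the paper: define $\phi_{(i,j),A}(m,m')$ as the coefficient of $s^it^j$ in $\phi_{A[s,t]}(sm,tm')$, and read off uniqueness, existence, and bihomogeneity from this formula. The only minor difference is in the final claim: the paper sets $s=t$ and uses homogeneity directly in $A[t]$, whereas you introduce a third variable $u$; both arguments are straightforward and yield the same conclusion.
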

\begin{proof}
Suppose that such a decomposition exists and let $A$ be an $R$-algebra. Then we have
\[
\phi_{A[s,t]}(sm,tm')=\sum_{i,j}\phi_{(i,j),A[s,t]}(sm,tm')=\sum_{i,j}s^it^j\phi_{(i,j),A}(m,m')\in\bigoplus_{i,j=0}^\infty s^it^jA\otimes N
\]
for all $m\in A\otimes M$ and $m'\in A\otimes M'$. This shows that the $\phi_{(i,j)}$ are unique. If $\phi$ is homogeneous of degree $d$, setting $s=t$, we see that $\phi=\sum_{i+j=d}\phi_{(i,j)}$ and hence $\phi_{(i,j)}=0$ for $i+j\neq d$. What remains to show the existence of the decomposition. In fact, defining $\phi_{(i,j),A}(m,m')$ to be the coefficient of $s^it^j$ in $\phi_{A[s,t]}(sm,tm')$, it is easy to show that the $\phi_{(i,j)}$ are bihomogeneous polynomial laws of degree $(i,j)$ adding up to $\phi$. 
\end{proof}

The class of $R$-modules, in addition to its structure of Abelian
category with $R$-module homomorphisms as morphisms, has the structure of
a (non-Abelian) category with polynomial laws as morphisms. Both structures
will be important to us, but we reserve the notation $\ModR$ for the
category in which the morphisms are $R$-module homomorphisms (i.e.,
homogeneous polynomial laws of degree $1$).

\begin{de}[Base change] \label{de:BaseChangePolyLaw}
If $B$ is an $R$-algebra, then the tensor product functor $\ModR
\to \ModB$, which sends {\em linear} polynomial laws over $R$ to
linear polynomial laws over $B$, can be extended to a functor from the
category of $R$-modules with polynomial laws over $R$ to the category
of $B$-modules with polynomial laws over~$B$: on objects, the functor
is just $M \mapsto B \otimes M$, and a polynomial
law $(\phi_A)_{A \in \AlgR}\colon M \to N$ is mapped to
$(\phi_A)_{A \in \AlgB}$ where, for a $B$-algebra $A$, the
map $\phi_A$ is interpreted as a map $A \otimes_B (B
\otimes_R M) \cong A \otimes_R M \to A \otimes_R N \cong A \otimes_B
(B \otimes_R N)$.
\end{de}

\subsection{The coordinate ring of a module}
\label{ssec:RM}

Let $M$ be a finitely generated $R$-module. 

\begin{de} \label{de:RM}
We write $R[M]$ for the set of polynomial laws $M\to R$ and $R[M]_d\subseteq R[M]$ for the subset of homogeneous polynomial laws of degree $d$. The addition and multiplication from Proposition~\ref{prop:PolyLaws}, the grading from Definition~\ref{de:Homogeneous} and the identification $R[M]_0=R$ give $R[M]=\bigoplus_{d=0}^\infty R[M]_d$ the structure of a $\ZZ_{\geq0}$-graded commutative $R$-algebra. We call this $R$-algebra the {\em
coordinate ring} of $M$.
\end{de}

\begin{re}
In \cite[Chapitre III]{roby}, various algebras associated to an $R$-module~$M$ 
are introduced, but they are different from our $R$-algebra
$R[M]$. One important difference is that for us, the elements of $M$
play the role of geometric objects, whereas there, the algebras consist
of elements in divided or symmetric powers of~$M$.
\end{re}

As usual with coordinate rings, the association $M \mapsto R[M]$ is a
contravariant functor from the category of $R$-modules with polynomial
laws to the category of $R$-algebras: a polynomial law $\phi\colon M \to N$
has a pull-back map $\phi^*\colon R[N] \to R[M]$ sending $f \mapsto f \circ \phi$. If $\phi$
is linear, then $\phi^*$ is a graded homomorphism.

If $M$ is generated by $v_1,\ldots,v_n$, then the injection $\iota\colon R[M]\to R[x_1,\ldots,x_n]$ of
Lemma~\ref{lm:Injection} is a graded ring homomorphism. The following lemma says
precisely which subalgebra its image is. 

\begin{lm} \label{lm:GradedSubring}
Let $\psi\colon N \to M$ be a surjective $R$-module homomorphism. Then the map $\psi^*$ is a graded
isomorphism from $R[M]$ to the graded $R$-subalgebra of
$R[N]$ whose degree-$d$ part equals
\[ 
\{f \in R[N]_d \mid \forall u \in \ker(\psi): f \circ t_u = f \} 
\]
where $t_u:N \to N$ (called {\em translation by $u$}) is the affine-linear polynomial law $v \mapsto v+u$.
\end{lm}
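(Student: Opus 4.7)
The pullback map $\psi^*\colon R[M] \to R[N]$, $g \mapsto g \circ \psi$, is a graded $R$-algebra homomorphism: it preserves sums and products by Proposition~\ref{prop:PolyLaws} and preserves degrees since $\psi$ is a polynomial law of degree one. I would establish the lemma by proving (i) $\psi^*$ is injective, (ii) the image lies in the translation-invariant subalgebra, and (iii) every translation-invariant element lies in the image.

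\emph{Injectivity.} If $g \circ \psi = 0$, then $g_A \circ \psi_A = 0$ on $A \otimes N$ for every $R$-algebra $A$; right-exactness of $A \otimes_R (-)$ makes $\psi_A$ surjective, forcing $g_A = 0$, and thus $g = 0$.

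\emph{Image inside the invariants.} For $u \in \ker(\psi)$ and any $A$, $v$ one computes
\[
(\psi \circ t_u)_A(v) = \psi_A(v + 1 \otimes u) = \psi_A(v) + 1 \otimes \psi(u) = \psi_A(v),
\]
so $\psi \circ t_u = \psi$ as polynomial laws $N \to M$, and hence $(\psi^* g) \circ t_u = g \circ \psi \circ t_u = g \circ \psi = \psi^* g$.

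\emph{Invariants inside the image.} Given a translation-invariant $f \in R[N]_d$, I would construct $g \in R[M]_d$ by choosing, for every $R$-algebra $A$ and every $m \in A \otimes M$, some preimage $\tilde m \in A \otimes N$ under the surjection $\psi_A$, and setting $g_A(m) := f_A(\tilde m)$. Two preimages $\tilde m, \tilde m'$ differ by an element of $\ker(\psi_A)$, which by right-exactness of the tensor product coincides with the image of $A \otimes \ker(\psi) \to A \otimes N$. Reading the translation-invariance condition in the natural functor-of-points sense---equivalently, as the single polynomial-law identity $f \circ T = f \circ \pi_N$, where $T,\pi_N\colon \ker(\psi) \oplus N \to N$ are the addition and projection polynomial laws---then yields $f_A(\tilde m) = f_A(\tilde m')$, so $g_A(m)$ is well defined. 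The remaining checks are routine: compatibility of $(g_A)_A$ with $R$-algebra morphisms $\alpha\colon A \to B$ follows because $\alpha_*(\tilde m)$ lifts $\alpha_*(m)$ and $f$ is a polynomial law; degree-$d$ homogeneity follows from $a \tilde m$ lifting $am$ and $f_A(a \tilde m) = a^d f_A(\tilde m)$; and $\psi^* g = f$ because $v$ itself lifts $\psi_A(v)$.

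\emph{Main obstacle.} The only delicate point is the well-definedness of $g_A$, which requires that $f$ be invariant under translation by every element of the image of $A \otimes \ker(\psi) \to A \otimes N$, not merely by elements of the form $1 \otimes u$ with $u \in \ker(\psi)$. Reading the ``$\forall u \in \ker(\psi)$'' in the polynomial-law / functor-of-points sense (equivalently, repackaging the hypothesis as the polynomial-law identity $f \circ T = f \circ \pi_N$ above) supplies exactly what is needed for the lift-and-pullback construction to go through.
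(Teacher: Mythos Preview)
Your argument for injectivity and for the inclusion of the image in the translation-invariant subalgebra is fine and matches the paper. The gap is in the surjectivity step, and it is exactly the point you flag as the ``main obstacle'' and then sidestep.

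The hypothesis in the lemma is that $f \circ t_u = f$ for each fixed $u \in \ker(\psi)$, where $t_u$ is the polynomial law with $(t_u)_A(v) = v + 1 \otimes u$. This only gives you $f_A(n + 1 \otimes u) = f_A(n)$; it does \emph{not} directly give $f_A(n + w) = f_A(n)$ for general $w \in A \otimes \ker(\psi)$. Your proposed resolution---``reading $\forall u \in \ker(\psi)$ in the functor-of-points sense,'' i.e.\ replacing the hypothesis by the polynomial-law identity $f \circ T = f \circ \pi_N$ on $\ker(\psi) \oplus N$---is a strictly stronger statement than what is assumed, and it is precisely the thing that needs to be proved. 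The paper does not use this convention (cf.\ the explicit formula $t_{u,A}(m) = m + 1 \otimes u$ in the proof, and the later application in Proposition~\ref{prop:Localisation}, which only verifies invariance for $u$ in an $R$-module).

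The paper closes this gap with a genuine argument: set $h := f \circ \bigl((n,n') \mapsto n+n'\bigr)$ on $N \oplus N$ and decompose $h = \sum_{i+j=d} h_{(i,j)}$ into bihomogeneous pieces. The hypothesis gives $h_A(n, 1 \otimes u) = h_A(n,0)$ for all $u \in \ker(\psi)$, forcing $h_{(i,j),A}(n, 1 \otimes u) = 0$ for $j > 0$. Bihomogeneity then yields $h_{(i,j),A}(n, a \otimes u) = a^j h_{(i,j),A}(n, 1 \otimes u) = 0$, and summing gives $f_A(n + a \otimes u) = f_A(n)$ for every $a \in A$. Iterating over a finite sum $\sum_i a_i \otimes u_i$ handles all of $A \otimes \ker(\psi)$. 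This bihomogeneous-decomposition step is the missing idea in your write-up.
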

\begin{proof}
Let $g\in R[M]_d$ and write $f=\psi^*(g)=g\circ \psi$. To see that $\psi^*$ is injective, note that $f_A=g_A\circ(\id_A\otimes\,\psi)$ for all $R$-algebras $A$. So if $f_A=0$, then $g_A=0$ as $\id_A\otimes\,\psi$ is surjective. To see that the image is contained in the subalgebra, it is enough to note that $\psi_A=\id_A\otimes\,\psi$ and $t_{u,A}(m)=m+1\otimes u$ and so $\psi\circ t_u=\psi$ as polynomial laws. Now, let $f \in R[N]_d$ be a polynomial law such that $f\circ t_u = f$ for all $u\in \ker(\psi)$. It remains to show that $f=g\circ\psi$ for some $g\in R[M]_d$. As $\id_A\otimes\,\psi$ is surjective, we set $g_A(m):=f_A(n)$ for any $n\in A\otimes N$ mapping to $m$. To do this, we need to show that $f_A(n)=f_A(n')$ whenever $n-n'\in\ker(\id_A\otimes\,\psi)$. Since the functor $A \otimes-$ from $R$-modules to $A$-modules is right-exact, we have $\ker(\id_A\otimes\,\psi)=A \otimes \ker(\psi)$. Take $h=f\circ((n,n')\mapsto n+n')$. Then we see that 
\[
h_A(n,1\otimes u)=f_A(n+1\otimes u)=(f\circ t_u)_A(n)=f_A(n)=h_A(n,0)
\]
for all $R$-algebras $A$, $n\in A\otimes N$ and $u\in\ker(\psi)$. It follows that $h_{(i,j),A}(n,1\otimes u)=0$ whenever $j>0$. And, we have $h_{(d,0),A}(n,n')=f_A(n)$. So
\[
\begin{array}{rll}
f_A(n+a\otimes u)&=h_A(n,a\otimes u)&\\
&=h_{(d,0),A}(n,a\otimes u)+\sum_{i=1}^dh_{(d-i,i),A}(b,a\otimes u)&\\
&=f_A(n)+\sum_{i=1}^da^ih_{(d-i,i),A}(b,1\otimes u)\\
&=f_A(n)
\end{array}
\]
for all $n\in A\otimes N$, $a\in A$ and $u\in\ker(\psi)$. So if $n-n'\in\ker(\id_A\otimes\,\psi)$, then $f_A(n)=f_A(n')$. This shows $g_A$ is well-defined. It is straightforward to check that $g=(g_A)_A$ is a homogeneous polynomial law of degree $d$. 
\end{proof}

\begin{ex}
When $R$ is an infinite field and both $M$ and $N$ are finite-dimensional
vector spaces over $R$, $R[M]$ is just the subring of $R[N]$
consisting of all polynomials that are constant on fibres of
the projection $N \to M$.  
\end{ex}

The following example shows that, even when $R$ is Noetherian and $M$
is finitely generated, $R[M]$ need not be Noetherian.

\begin{ex} \label{ex:NonNoetherian}
Let $R:=K[t]/(t^2)$ where $K$ is a field of characteristic zero, and
let $M:=K[t]/(t)$. Then $M=R/(t)$ is an $R$-module generated by a single
element $v:=1+(t)$ and $R[M]$ is the subring of $R[x]$ spanned by all
homogeneous polynomials $f=c x^d$ such that $f(x+at)=f(x)$ for all
$a \in K$. Now $c(x+at)^d=cx^d + cdatx^{d-1}$ and hence we need that $c \in
(t)$ whenever $d \geq 1$. Hence $R[M]$ is the vector space over $K$ spanned by
$1,t,tx,tx^2,\ldots$ with the multiplication $(t^ix)(t^ix)=0$. Observe
that $R[M]$ is not Noetherian, since the ideal $\mathrm{span}\{t,tx,tx^2,\ldots
\}$ is not finitely generated. On the other hand, the
quotient $R[M]^{\red}$ of $R[M]$ by its ideal of nilpotent elements is $K$.
\end{ex}

However, we will see later that if $\Spec(R)$ is Noetherian and $M$ is
finitely generated, then a certain topological space $\AA_M$ defined
using $R[M]$ is also Noetherian. In
Example~\ref{ex:NonNoetherian}, this is a
consequence of the fact that $\Spec(R[M])=\Spec(K)$ is Noetherian. See
also Remark~\ref{re:SpecRM}.

\begin{ex} \label{ex:CharTwo}
Now consider a field $K$ of characteristic $2$ and set
$R:=K[t]/(t^2)$. The same computation as above shows that $cx^i$
with {\em odd} $i$ can only be in $R[M] \subseteq R[x]$ if $c$ is in
$(t)$. But for {\em even} $i$, $cx^i$ is in $R[M]$ regardless of $c
\in R$. Hence $R[M]$ is the $K$-vector space with basis
\[ 
1,t,tx,x^2,tx^2,tx^3,x^4,tx^4,\ldots
 \]
and $R[M]^{\red} \cong K[x^2]$ as a graded algebra. 
\end{ex}

If $B$ is an $R$-algebra, then the base change functor from
Definition~\ref{de:BaseChangePolyLaw} sends polynomial laws $M \to
R$ to polynomial laws $B \otimes M \to B$. This yields an $R$-algebra
homomorphism $R[M] \to B[B\otimes M]$ and hence a $B$-algebra homomorphism
$B \otimes R[M] \to B[B \otimes M]$. The following example shows that
this needs not be an isomorphism.

\begin{ex}
Let $R=\ZZ$ and $M=\ZZ/2\ZZ$, generated by a single element
$v=1+2\ZZ$. Then by Lemma~\ref{lm:GradedSubring}, $R[M]$ is the
subring of $R[x]$ spanned by all homogeneous univariate polynomials $f$
such that $f(x+2a)=f(x)$ for all $a \in \ZZ$. Only the constant
polynomials have that property, so $R[M]=R$. Now take the $\ZZ$-algebra
$B=\ZZ/2\ZZ=:\FF_2$, which is a field, and $B \otimes M$ is the
one-dimensional vector space over that field, so $B[B \otimes M] \cong
\FF_2[x]$. 
\end{ex}

However, when $B$ is a localisation of a domain $R$, then the
map {\em is} an isomorphism:

\begin{prop}  \label{prop:Localisation}
Suppose that $R$ is a domain. Let $M$ be a finitely generated $R$-module and let $S$ be
a multiplicative subset of $R$ not containing $0$. Set $R':=S^{-1} R$. Then
\[ 
R'\otimes R[M] \cong S^{-1} R[M] \cong R'[R' \otimes M]\cong R'[S^{-1}M]. 
\]
\end{prop}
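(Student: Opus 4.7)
The isomorphisms $R' \otimes R[M] \cong S^{-1}R[M]$ and $R' \otimes M \cong S^{-1}M$ are standard properties of localisation, and the latter gives $R'[R' \otimes M] \cong R'[S^{-1}M]$ automatically. The substantive claim is therefore that the base-change map $R' \otimes R[M] \to R'[R' \otimes M]$ from Definition~\ref{de:BaseChangePolyLaw} is an isomorphism.

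My plan is to realise both sides as subrings of the polynomial ring $R'[x_1,\ldots,x_n]$ and compare them. Pick generators $v_1,\ldots,v_n$ of $M$, so that $1\otimes v_1,\ldots,1\otimes v_n$ generate $R'\otimes M$. Lemma~\ref{lm:Injection} yields injections $\iota\colon R[M]\hookrightarrow R[x_1,\ldots,x_n]$ and $\iota'\colon R'[R'\otimes M]\hookrightarrow R'[x_1,\ldots,x_n]$. Since $R'=S^{-1}R$ is flat over $R$, tensoring $\iota$ with $R'$ gives an injection $R'\otimes\iota\colon R'\otimes R[M]\hookrightarrow R'[x_1,\ldots,x_n]$, and unwinding the definition of base change shows that this equals the composition of the base-change map with $\iota'$. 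Thus it suffices to verify that $\im(R'\otimes\iota)$ and $\im(\iota')$ coincide inside $R'[x_1,\ldots,x_n]$.

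To identify those images I would invoke Lemma~\ref{lm:GradedSubring}: $\im(\iota)$ consists of polynomials $f$ satisfying $f\circ t_u=f$ for every $u\in K:=\ker(R^n\to M)$, while $\im(\iota')$ consists of $f'$ satisfying $f'\circ t_{u'}=f'$ for every $u'\in K':=\ker((R')^n\to R'\otimes M)$. Flatness gives $K'=S^{-1}K$, and a short homogeneity/scaling argument (using that $R'$ is a domain) reduces $t_{u'}$-invariance for $u'\in K'$ to $t_u$-invariance for $u\in K$. The forward inclusion $\im(R'\otimes\iota)\subseteq\im(\iota')$ is then immediate.

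The main obstacle is the reverse inclusion. Given $f'=f/s\in R'[x_1,\ldots,x_n]$ with $f\in R[x_1,\ldots,x_n]$, $s\in S$, and $f'\circ t_u=f'$ for all $u\in K$, I would like to conclude that $f$ itself lies in $\im(\iota)$. The given relation unfolds to $s_u\,(f\circ t_u-f)=0$ in $R[x_1,\ldots,x_n]$ for some $s_u\in S$ depending on $u$, which is awkward since $K$ need not be finitely generated and no single denominator clears all $u$ at once. The key point is that the hypothesis ``$R$ is a domain'' forces $R[x_1,\ldots,x_n]$ to be a domain, so each relation $s_u\,(f\circ t_u-f)=0$ with $s_u\neq 0$ permits cancellation and gives $f\circ t_u=f$ identically. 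Hence $f\in\im(\iota)$, completing the proof.
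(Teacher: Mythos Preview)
Your proof is correct and follows essentially the same approach as the paper: embed both $R'\otimes R[M]$ and $R'[R'\otimes M]$ into $R'[x_1,\ldots,x_n]$ via Lemma~\ref{lm:Injection}, identify the images using the translation-invariance description of Lemma~\ref{lm:GradedSubring} together with $\ker((R')^n\to R'\otimes M)=S^{-1}\ker(R^n\to M)$, and for the nontrivial inclusion clear a denominator and invoke that $R$ (hence $R[x_1,\ldots,x_n]$) is a domain to cancel. The paper phrases the last step slightly more directly---an identity in $R'[x_1,\ldots,x_n]$ between polynomials with $R$-coefficients already holds in $R[x_1,\ldots,x_n]$ since $R\hookrightarrow R'$---but this is the same use of the domain hypothesis as your cancellation of $s_u$.
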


\begin{proof}
The first and last isomorphisms are standard. For the middle isomorphism,
we choose generators $m_1,\ldots,m_n$ of $M$ and embed $R[M]$ as a
graded $R$-subalgebra~$A$ of $R[x_1,\ldots,x_n]$. Since localisation
is exact, $S^{-1}R[M]$ is then isomorphic to the $R'$-algebra $S^{-1}
A \subseteq R'[x_1,\ldots,x_n]$. On the other hand, using the generators
$1 \otimes m_1,\ldots,1 \otimes m_n$, the $R'$-algebra $R'[R' \otimes M]$ also embeds as a
graded $R'$-subalgebra~$B$ of $R'[x_1,\ldots,x_n]$. The canonical map $R'
\otimes R[M] \to R'[R' \otimes M]$ translates into an inclusion $S^{-1}
A \subseteq B$, so it remains to show that $B \subseteq S^{-1} A$. For
this, let $O$ be the kernel of the $R$-module homomorphism $R^n \to M$
given by the generators $m_1,\ldots,m_n$. Again since localisation is
exact, $S^{-1}O \cong R' \otimes O$ is the kernel of the corresponding
$R'$-module homomorphism $(R')^n \to R' \otimes M$. Let $f \in B$ and
let $s \in S$ be such that $g:=sf \in R[x_1,\ldots,x_n]$. Then, since
$f \in B$, one has that $f \circ t_u = f$ for all $u \in S^{-1}O \subseteq (R')^n$,
by Lemma~\ref{lm:GradedSubring} applied to the $R'$-module $R' \otimes
M$. In particular, the multiplication by $s$ gives $g \circ t_u = g$ over $R'$ 
for all $u \in O \subseteq R^n$. Since $R$ is a domain, the same holds over $R$ 
and hence $g \in A$, again by Lemma~\ref{lm:GradedSubring} but now applied
to the $R$-module $M$. Hence $f=s^{-1} g \in S^{-1} A$, as desired.
\end{proof}

Like in ordinary algebraic geometry, the coordinate ring of
a direct sum is the tensor product of the coordinate rings. 

\begin{prop} \label{prop:Products}
Let $M,N$ be finitely generated $R$-modules. 
Then 
\[
R[M \oplus N] \cong R[M]\otimes R[N].
\]
\end{prop}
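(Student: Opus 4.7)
The plan is to construct a natural graded $R$-algebra homomorphism $\Phi\colon R[M]\otimes_R R[N]\to R[M\oplus N]$ and to show it is bijective by reducing to bihomogeneous components and invoking translation invariance.

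First I would construct $\Phi$ using multiplication in $R$. For $f\in R[M]$ and $g\in R[N]$, I set $\Phi(f\otimes g)_A(m,n):=f_A(m)\cdot g_A(n)$ for each $R$-algebra $A$ and $(m,n)\in A\otimes M\oplus A\otimes N=A\otimes(M\oplus N)$. Naturality in $A$ is inherited from $f$ and $g$, and the fact that multiplication $R\times R\to R$ extends to a polynomial law by Proposition~\ref{prop:composition} makes this construction well-defined. Extending $R$-bilinearly yields a graded $R$-algebra homomorphism sending $R[M]_i\otimes R[N]_j$ into the bihomogeneous component $R[M\oplus N]_{(i,j)}$.

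Next, I would reduce to bihomogeneous parts by invoking the immediately preceding proposition: since $R[M\oplus N]=\bigoplus_{(i,j)}R[M\oplus N]_{(i,j)}$, it suffices to show that each restriction $\Phi_{(i,j)}\colon R[M]_i\otimes R[N]_j\to R[M\oplus N]_{(i,j)}$ is a bijection. Choosing generators $v_1,\ldots,v_n$ of $M$ and $w_1,\ldots,w_m$ of $N$, and letting $K_M\subseteq R^n$, $K_N\subseteq R^m$ denote the respective kernels of the induced surjections, Lemmas~\ref{lm:Injection} and~\ref{lm:GradedSubring} identify $R[M]_i$, $R[N]_j$, and $R[M\oplus N]_{(i,j)}$ with the translation-invariant subspaces inside $R[x_1,\ldots,x_n]_i$, $R[y_1,\ldots,y_m]_j$, and $R[x,y]_{(i,j)}\cong R[x]_i\otimes_R R[y]_j$ under the subgroups $K_M$, $K_N$, and $K_M\oplus K_N$ respectively. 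In the free case $K_M=K_N=0$ this reduces to the classical identification $R[x]\otimes R[y]\cong R[x,y]$.

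For the general case, I would analyse an arbitrary bihomogeneous element $\phi\in R[M\oplus N]_{(i,j)}$ through its expansion $\phi=\sum_{\alpha,\beta}c_{\alpha,\beta}x^\alpha y^\beta$ in the monomial basis of $R[x,y]_{(i,j)}$. The $K_M$-invariance in the $x$-variables forces $\sum_\alpha c_{\alpha,\beta}x^\alpha\in R[M]_i$ for each $\beta$, and symmetrically the $K_N$-invariance in $y$ forces $\sum_\beta c_{\alpha,\beta}y^\beta\in R[N]_j$ for each $\alpha$. From these two conditions I would extract an explicit preimage of $\phi$ in $R[M]_i\otimes R[N]_j$ and verify both surjectivity and injectivity of $\Phi_{(i,j)}$.

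The hard part will be making this last step rigorous in the absence of flatness of $R[M]$ and $R[N]$ over $R$; indeed, Example~\ref{ex:NonNoetherian} shows that $R[M]$ need not even be Noetherian. Freeness of $R[y]_j$ yields $(R[x]_i\otimes R[y]_j)^{K_M\oplus 0}=R[M]_i\otimes R[y]_j$ by a kernel argument, and symmetrically $(R[x]_i\otimes R[y]_j)^{0\oplus K_N}=R[x]_i\otimes R[N]_j$, so that $R[M\oplus N]_{(i,j)}$ identifies with the intersection of these two subspaces inside $R[x]_i\otimes R[y]_j$. Matching this intersection with the image of the natural multiplication map $R[M]_i\otimes R[N]_j\to R[x]_i\otimes R[y]_j$ will require careful explicit manipulation of the bihomogeneous expansion rather than a formal flatness argument.
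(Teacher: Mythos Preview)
Your approach mirrors the paper's exactly: construct the multiplication map $\Phi$, reduce to bihomogeneous components, and identify both sides with translation-invariant subspaces of the free polynomial ring via Lemma~\ref{lm:GradedSubring}. The paper simply asserts the final isomorphism; you are right to flag it as the hard part.

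Unfortunately, that step cannot be completed: the map $\Phi$ is not an isomorphism in general. Take $R=\ZZ/4\ZZ$ and $M=N=R/2R$. Then $R[M]_1=R[N]_1=2R\cong\ZZ/2\ZZ$ (the only linear form invariant under $x\mapsto x+2$ is $2x$), while $R[M\oplus N]_{(1,1)}=2R\cdot xy\cong\ZZ/2\ZZ$. But $\Phi((2x)\otimes(2y))=4xy=0$, so $\Phi_{(1,1)}$ is the zero map between two nonzero modules and is neither injective nor surjective. In your language, the intersection $(R[M]_1\otimes R[y]_1)\cap(R[x]_1\otimes R[N]_1)$ inside $R[x]_1\otimes R[y]_1\cong R$ equals $2R$, whereas the image of $R[M]_1\otimes R[N]_1$ there is $\{0\}$; these do not match. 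The underlying obstruction is precisely the one you anticipated: when neither $R[M]_i$ nor $R[N]_j$ is flat, the natural map $R[M]_i\otimes R[N]_j\to R[x]_i\otimes R[y]_j$ can have a kernel, and no amount of explicit manipulation of bihomogeneous expansions will repair this.

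The argument does go through when at least one of $M,N$ is free (then one tensor factor is free, hence flat, and your kernel computations become exact). This covers the paper's actual uses of the proposition---the Schur-algebra comultiplication in \S\ref{ssec:FriedlanderSuslin}, where $\End(U)$ is free, and the decomposition $R[P(U)]\cong R[M(U)]\otimes R[P'(U)]$ in \S\ref{ssec:InnerInduction}, which is invoked only after \S\ref{ssec:SplittingOff} has arranged both summands to be free.
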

\begin{proof}
Elements of $R[M]$ and $R[N]$ induce elements of $R[M\oplus N]$ via composition with the projections $M\oplus N\to M$ and $M\oplus N\to N$, respectively. The product of such induced polynomial laws $M\oplus N\to R$ gives a bilinear map $R[M]\times R[N]\to R[M\oplus N]$. This induces an $R$-linear map $R[M]\otimes R[N]\to R[M\oplus N]$, which is in fact a homomorphism of $R$-algebras. Denote by $R[M\oplus N]_{(d,e)}$ the $R$-submodule of $R[M\oplus N]$ consisting of all bihomogeneous polynomial laws of degree $(d,e)$. It suffices to show that $R[M\oplus N]_{(d,e)}\cong R[M]_d\otimes R[N]_e$. To see this, first suppose that $M,N$ are free. In this case, we get $R[x_1,\ldots,x_n,y_1,\ldots,y_m]_{(d,e)}\cong R[x_1,\ldots,x_n]_d\otimes R[y_1,\ldots,y_m]_e$ when $x_i,y_j$ have degrees $(1,0),(0,1)$, respectively. In general, let $\phi\colon M'\to M$ and $\psi\colon N'\to N$ be surjective $R$-linear maps from finitely generated free $R$-modules. Then we see that
\[
\{f \in R[M'\oplus N']_{(d,e)} \mid \forall u_1 \in \ker(\phi)\forall u_1\in\ker(\psi) : f \circ t_{(u_1,u_2)} = f \} \cong
\]
\[
\{f \in R[M']_d \mid \forall u_1 \in \ker(\phi): f \circ t_{u_1} = f \}\otimes
\{g \in R[N']_e \mid \forall u_1\in\ker(\psi) : g \circ t_{u_2} = g\} 
\]
and hence $R[M\oplus N]_{(d,e)}\cong R[M]_d\otimes R[N]_e$.
\end{proof}

Example~\ref{ex:NonNoetherian} shows that the coordinate ring of
a module is quite a subtle notion. However, we will see that in
the proof of our Theorem~\ref{thm:Main}, by a localisation we
can always pass to a case where the module $M$ is free. In that
case, by Lemma~\ref{lm:GradedSubring}, $R[M]$ is just a
polynomial ring over $R$. 

\section{The topological space $\AA_M$}
\label{sec:The topological space AM}

\subsection{The space $\AA_M$}

We now construct the topological space $\AA_M$ for $M$ a finitely generated
$R$-module. To be precise,
$\AA_M$ is a topological space over the category $\DomR$ of $R$-domains with 
$R$-algebra monomorphisms, in the sense of the following definition. 

\begin{de} \label{de:TopSpace}
Let $F\colon\mathbf{C}\to\mathbf{D}$ be a functor and suppose that the
objects of $\mathbf{D}$ are sets and the morphisms are maps (i.e, we
have a forgetful functor $\Forget\colon\mathbf{D}\to\Set$). An {\em
element} of $F$ is an element of $F(C)$ for some $C\in\mathbf{C}$. A
{\em subset} of $F$ is a subfunctor of $\Forget\circ F$, i.e., a rule
$X$ that assigns to each $C\in\mathbf{C}$ a subset $X(C)\subseteq
F(C)$ in such a manner that $F_{C,D}(\phi)$ maps $X(C)$ into $X(D)$
for all morphisms $\phi\colon C\to D$. A {\em topological space over
$\mathbf{C}$} is a pair $(F,\mathcal{T})$ where $F$ is a functor as
above and $\mathcal{T}$ is a collection of subsets of $F$ including
the subsets $\emptyset,F$ that is closed under taking arbitrary intersections and finite unions.
\end{de}

\begin{re}
We note that all definitions that can be stated in terms of elements and
(closed) subsets of a topological space carry over to topological spaces
over $\mathbf{C}$. We also note that a topological space $(F,\mathcal{T})$
gives rise to a functor from $\mathbf{C}$ to the category of topological
spaces, which sends $C$ to the set $F(C)$ with the collection $\{X(C) \mid
X \in \mathcal{T}\}$ of closed subsets.  Clearly, not every functor from
$\mathbf{C}$ to the category of topological spaces arises in this manner.
\end{re}

In what follows, we use the term \textit{``injections"}
to refer to $R$-algebra monomorphisms. 

\begin{de} \label{de:SubsetAM}
Define $\AA_M$ to be the rule assigning to each $D \in \DomR$ the set $D \otimes M$. 
A subset of $\AA_M$ is a rule $X$ that assigns to each $D \in \DomR$ a subset $X(D)$ of $D \otimes M$ in such a manner that $\iota\otimes\id_M$ maps $X(D)$ into $X(E)$ for all injections $\iota\colon D\to E$. 
For every subset $S \subseteq R[M]$, the rule $\cV(S)$ assigning 
\[
D\mapsto\cV(S)(D):=\{m \in D \otimes M \mid \forall f \in S:f_D(m)=0\}
\]
is a subset of $\AA_M$. We say that $X\subseteq\AA_M$ is {\em closed}
if $X=\cV(S)$ for some $S\subseteq R[M]$. This collection of closed
sets makes $\AA_M$ into a topological space over $\DomR$ in the sense
of Definition~\ref{de:TopSpace}. 
\end{de}

\begin{re}
If $D$ is an $R$-domain, then we can make $D \otimes M$ into an
topological space by defining the closed subsets to be
$\cV(S)(D)$ for $S\subseteq R[M]$; we will call this the Zariski
topology (over $R$) on $D \otimes M$.  To see that these sets are preserved
under finite unions, one uses $\cV(S)(D) \cup \cV(T)(D)= \cV(S \cdot
T)(D)$, which holds since $D$ is a domain. For any $R$-algebra homomorphism $D \to E$ between
$R$-domains (not necessarily injective), the induced map $D \otimes
M \to E \otimes M$ sends $\cV(S)(D)$ into $\cV(S)(E)$.  Furthermore,
if $D \to E$ is injective, then that induced map is continuous with
respect to the topologies on $D \otimes M$ and $E \otimes M$. So
$\AA_M$ induces a functor from $\DomR$ to $\Top$ and the $\cV(S)$ are
closed subfunctors. In this paper, however, we will not consider closed subsets of $D\otimes M$ on their own. 
\end{re}

\begin{re}
We think of $\AA_M$ as the ``affine space'' corresponding to $M$.
Note that in the definition of closed subsets of $\AA_M$ we require $S$ to
be independent of~$D$, i.e., not every rule assigning to $D\in\DomR$ a subset of the form $\cV(S)(D)$ is a closed subset of $\AA_M$. To see that this is desirable, consider $R=\ZZ$, $M=R$ and let $X_n$ be the rule such that $X_n(D)=\{0\}=\cV(\{x\})(D)$ when $0<\cha D\leq n$ and $X_n(D)=D=\cV(\emptyset)(D)$ otherwise. Then $X_1\supseteq X_2\supseteq X_3\supseteq\ldots$ is a descending chain of rules and $X_{p-1}(\FF_p)=\FF_p\neq \{0\}=X_p(\FF_p)$ for every prime number $p>0$.
\end{re}

\begin{de}[Base change]
If $B$ is an $R$-algebra, and $D$ is a $B$-domain, then $D \otimes
M \cong D \otimes_B (B \otimes M)$ also carries a Zariski topology
over $B$, coming from closed sets defined by subsets of $B[B \otimes
M]$. This refines the Zariski topology on $D \otimes M$ over~$R$. If $X$
is a closed subset of $\AA_M$, then we write $X_B$ for the closed subset
of~$\AA_{B \otimes M}$ that maps a $B$-domain $D$ to $X(D)$.
\end{de}

Let $X$ be a subset of $\AA_M$. Then we define the {\em ideal} of $X$ to be 
\[
\cI_X:=\{ f\in R[M]\mid\forall D\in\DomR\forall x\in X(D): f_D(x)=0\}.
\]
As $f_D$ maps elements into a domain, we see that $\cI_X$ is a radical ideal of $R[M]$. We define the {\em closure} of $X$ in $\AA_M$ to be the closed subset $\overline{X}:=\cV(\cI_X)$ of $\AA_M$.

Let $\phi\colon M\to N$ be a polynomial law between finitely generated $R$-modules. Then the maps $(\phi_D)_{D\in\DomR}$ define a continuous map $\AA_M\to\AA_N$, i.e., for every injection $\iota\colon D\to E$, the diagram 
\[ 
\xymatrix{ \AA_M(D) \ar[r]^{\phi_D} \ar[d]_{\iota
\otimes \id_M} & \AA_N(D)
\ar[d]^{\iota \otimes \id_N} \\
\AA_M(E) \ar[r]^{\phi_E} \ar[r] &\AA_N(E)}
\]
commutes, so $\phi(X)=(D\mapsto \phi_D(X(D)))$ is a subset of $\AA_N$ for each subset $X$ of $\AA_M$, and for every subset $S\subseteq R[N]$, the subset 
\[
\phi^{-1}(\cV(S))=(D\mapsto\phi_D^{-1}(\cV(S)(D)))_D
\]
of $\AA_M$ is closed (as $\phi_D^{-1}(\cV(S)(D))=\cV(\phi^*S)(D)$ holds). As usual, we have
\[
\phi(\overline{X})\subseteq\overline{\phi(X)}
\]
for all subsets $X$ of $\AA_M$.

When $M$ is free and finitely generated, we have the usual correspondence between closed subsets and radical ideals. 

\begin{prop}
Let $M$ be a finitely generated free $R$-module of rank $n$.
Then the rule sending an element $x\in D\otimes M$ of $\AA_M$ to
$\fq_x:=\{f\in R[M]\mid f_D(x)=0\}\in\AA^n_R:=\Spec(R[M])$ is
surjective and maps closed subsets of $\AA_M$ to closed subsets
of $\AA^n_R$. Moreover, that map from closed subsets
of~$\AA_M$ to closed subsets of $\AA^n_R$ is a bijection. In particular, we have $\cI_{\cV(S)}=\rad(S)$ for any subset $S\subseteq R[M]$. 
\end{prop}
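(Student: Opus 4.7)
The plan is to reduce everything to classical affine $n$-space over $R$ using Lemma~\ref{lm:Injection}. Fixing a basis of $M$, that lemma identifies $R[M]$ with the polynomial ring $R[x_1,\ldots,x_n]$ and $\AA_M(D) = D \otimes M$ with $D^n$ in such a way that the value $f_D(x)$ is ordinary polynomial evaluation. Under these identifications, $\fq_x$ is the kernel of the evaluation homomorphism $R[M] \to D$ sending $f \mapsto f_D(x)$, which is a prime ideal since $D$ is a domain, so the rule is well defined.

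For surjectivity, given $\fq \in \Spec(R[M])$, set $D := \Frac(R[M]/\fq)$; this is an $R$-field, hence an $R$-domain. Let $\bar x_i \in D$ denote the image of the $i$-th coordinate variable, and put $\bar x=(\bar x_1,\ldots,\bar x_n) \in D^n$. The evaluation map at $\bar x$ factors as $R[M] \twoheadrightarrow R[M]/\fq \hookrightarrow D$, whose kernel is $\fq$; thus $\fq_{\bar x}=\fq$. Now for a closed subset $X=\cV(S)$ of $\AA_M$, every $x \in X(D)$ satisfies $S \subseteq \fq_x$, so the image of $X$ under the rule is contained in the closed set $V(S)\subseteq\Spec(R[M])$. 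Conversely, applying the surjectivity construction to any $\fq \in V(S)$ produces a point realising $\fq$ that lies in $X$, so the image is exactly $V(S)$, a closed subset of $\AA^n_R$.

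For the bijection between closed subsets of $\AA_M$ and of $\AA^n_R$, since closed subsets of $\Spec(R[M])$ correspond bijectively to radical ideals, it is enough to establish the ``in particular'' statement $\cI_{\cV(S)}=\rad(S)$: then $\cV(S)$ is recovered from its image $V(S)=V(\cI_{\cV(S)})$ as the vanishing of its ideal. The inclusion $\rad(S) \subseteq \cI_{\cV(S)}$ is immediate: if $g^k=\sum h_i f_i$ with $f_i \in S$, then for $x \in \cV(S)(D)$ we have $g_D(x)^k=0$, and then $g_D(x)=0$ because $D$ is a domain. For the reverse inclusion, suppose $g \notin \rad(S)$; then there is a prime $\fq \supseteq S$ with $g \notin \fq$, and the surjectivity construction yields $x \in D^n$ with $\fq_x=\fq$, so $x \in \cV(S)(D)$ while $g_D(x) \neq 0$, showing $g \notin \cI_{\cV(S)}$.

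There is no serious obstacle here: every step is a functorial translation of classical commutative algebra, and the only real ingredient is the construction $D=\Frac(R[M]/\fq)$, which plays the role of the residue field at the generic point of $V(\fq)$ and is what upgrades the classical bijection between $\Spec$ and radical ideals to a statement about the functor $\AA_M$ over $\DomR$.
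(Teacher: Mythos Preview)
Your proof is correct and follows essentially the same route as the paper's: both identify $R[M]$ with a polynomial ring, realise each prime $\fq$ as $\fq_x$ for a point with coordinates in (the fraction field of) $R[M]/\fq$, and deduce $\cI_{\cV(S)}=\rad(S)$ from the characterisation of the radical as the intersection of the primes containing $S$. The only cosmetic difference is that the paper takes $D=R[M]/\fq$ rather than its fraction field, and packages the bijection argument as a single chain of equalities $\cI_{\cV(S)}=\bigcap_x \fq_x=\bigcap_{\fq\supseteq S}\fq=\rad(S)$.
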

\begin{proof}
Note that for every $R$-domain $D$ and element $x\in D\otimes
M$, the set $\fq_x\subseteq R[M]$ is a prime ideal. Let
$\fq\subseteq R[M]=R[x_1,\ldots,x_n]$ be a prime ideal. Then we
have $\fq=\fq_x$ for $x=(x_1+\fq,\ldots,x_n+\fq)\in
(R[M]/\fq)\otimes M$. Next, let $S\subseteq R[M]$ be a set. Then
we see that $\{\fq_x\mid x\in \cV(S)(D),
D\in\DomR\}=\{\fq\in\Spec(R[M])\mid \fq\supseteq S\}$. So closed
subsets of $\AA_M$ are mapped to closed subsets of $\AA^n_R$.
Clearly, every closed subset arises from a closed subset of $\AA_M$. To see that this map is injective, we note that
$$
\cI_{\cV(S)}=\bigcap_{\substack{x\in \cV(S)(D)\\D\in\DomR}}\fq_x=\bigcap_{\substack{\fq\in\Spec(R[M])\\ \fq\supseteq S}}\fq=\rad(S)\mbox{ and }\cV(S)=\cV(\rad(S)). 
$$
Hence $\cV(S)$ is uniquely determined by its associated subset of $\AA^n_R$.
\end{proof}

While we have defined closed subsets of $\AA_M$ by looking at all
$R$-domains $D$, it actually suffices to look at algebraic closures
$\overline{K_\fp}$ where $\fp \in \Spec(R)$. For $\fp \in \Spec(R)$, we write $K_\fp:=\Frac(R/\fp)$ for the fraction field of $R/\fp$. 

\begin{prop}\label{prop:ideal_of_closure}
Let $X$ be a subset of $\AA_M$. Then 
\[
\cI_X=\bigcap_{\fp\in\Spec(R)}\left\{f\in R[M]\,\middle|\, f_{\kpbar}\in\cI_{X(\kpbar)}\right\}.
\]
\end{prop}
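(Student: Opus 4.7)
The plan is to handle the two inclusions separately. The inclusion $\cI_X \subseteq \bigcap_{\fp}\{f : f_{\kpbar} \in \cI_{X(\kpbar)}\}$ is immediate from the definitions: each $\kpbar$ is itself an $R$-domain, so any $f \in \cI_X$ vanishes in particular on $X(\kpbar)$, whence $f_{\kpbar} \in \cI_{X(\kpbar)}$.

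For the converse I would fix $f$ in the right-hand intersection, together with an $R$-domain $D$ and a point $x \in X(D)$, and show that $f_D(x) = 0$. The first move is to pass from $D$ to the algebraic closure $E := \overline{\Frac(D)}$, which is an algebraically closed $R$-field. Since $D \hookrightarrow E$ is an $R$-algebra monomorphism, functoriality of the subset $X$ gives a point $x_E \in X(E)$, and the naturality square for the polynomial law $f$ under the inclusion $D\hookrightarrow E$ shows that $f_D(x) = 0$ if and only if $f_E(x_E) = 0$, thereby reducing the problem to~$E$.

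The second move is to notice that, setting $\fp := \ker(R \to D) = \ker(R \to E)$, the algebraically closed field $E$ contains $K_\fp$ (via $R/\fp \hookrightarrow \Frac(D) \hookrightarrow E$) and hence a copy of $\kpbar$, realised as the algebraic closure of $K_\fp$ inside $E$. Fixing any such embedding $\kpbar \hookrightarrow E$ yields a $\kpbar$-algebra structure on $E$ compatible with the original $R$-algebra structure (both composites $R \to E$ factor through $R/\fp$ and agree on the image of $R$). Thus $E$ is a $\kpbar$-domain and $x_E$ is a $\kpbar$-domain valued point of $X$; the hypothesis $f_{\kpbar} \in \cI_{X(\kpbar)}$ then yields $(f_{\kpbar})_E(x_E) = 0$, and the transitivity of base change $(f_{\kpbar})_E = f_E$ gives $f_E(x_E) = 0$, as required.

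The main subtlety lies in the reading of $\cI_{X(\kpbar)}$: the argument above uses that this ideal captures vanishing at every $\kpbar$-domain valued point of $X$, not merely at $\kpbar$-points. Either this is the intended definition, or one invokes the Nullstellensatz applied to $\kpbar[\kpbar \otimes M]$ (a polynomial ring over the algebraically closed field $\kpbar$ in $\dim_{\kpbar}(\kpbar \otimes M)$ variables) to propagate the vanishing condition from $\kpbar$-points to all $\kpbar$-domain points. The geometric core of the proof is simply that every $R$-domain point of $X$ can be enlarged to sit inside an algebraically closed $R$-field which contains a copy of $\kpbar$ for the appropriate $\fp$, so the global hypothesis collapses to a local one at that prime.
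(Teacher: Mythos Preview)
Your proof is correct and follows essentially the same route as the paper's: both handle the easy inclusion trivially, and for the hard inclusion both pick a point $x\in X(D)$, pass to an algebraically closed field containing both $\Frac(D)$ and $\kpbar$ (you construct it explicitly as $\overline{\Frac(D)}$, the paper just asserts the existence of such an $L$), and then invoke the Nullstellensatz to transfer the vanishing of $f$ from $\kpbar$-points to that larger field. Your explicit discussion of how $\cI_{X(\kpbar)}$ should be read and why the Nullstellensatz is needed is in fact more careful than the paper's one-line appeal to it.
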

\begin{proof}
Clearly, the inclusion $\subseteq$ holds. Let $f\in R[M]$ be such that $f_{\kpbar}\in\cI_{X(\kpbar)}$ for all $\fp\in\Spec(R)$. Let $D$ be an $R$-domain and let $\fp$ be the kernel of the homomorphism
$R \to D$. Then there exists a field $L$ containing $\Frac(D)$ and
$\overline{K_{\fp}}$. By the Nullstellensatz, the fact that $f_{\kpbar}\in\cI_{X(\kpbar)}$ implies that $f_L\in\cI_{X(L)}$. It follows that $f_D$ vanishes on~$X(D)$.
\end{proof}

\begin{cor}\label{cor:Kpbarpoints}
A closed subset $X$ of $\AA_M$ is uniquely determined by its values
$X(\overline{K_\fp})$ where~$\fp$ runs over $\Spec(R)$.
\end{cor}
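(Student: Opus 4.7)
The plan is to deduce this corollary directly from Proposition~\ref{prop:ideal_of_closure} by observing two things: first, that any closed subset $X\subseteq\AA_M$ is recovered from its ideal as $X=\cV(\cI_X)$; and second, that $\cI_X$ itself depends only on the values $X(\kpbar)$ for $\fp\in\Spec(R)$, which is precisely what Proposition~\ref{prop:ideal_of_closure} asserts. Composing these two facts gives the desired reconstruction of $X$ from its $\kpbar$-points.

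First I would verify the reconstruction $X=\cV(\cI_X)$. By definition of a closed subset, $X=\cV(S)$ for some $S\subseteq R[M]$; since every element of $S$ vanishes on all of $X$, we have $S\subseteq\cI_X$ and therefore $\cV(\cI_X)\subseteq\cV(S)=X$. Conversely, for any $R$-domain $D$ and any $x\in X(D)$, every $f\in\cI_X$ satisfies $f_D(x)=0$ by the very definition of $\cI_X$, so $x\in\cV(\cI_X)(D)$. Hence $X=\cV(\cI_X)$, as claimed.

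Next I would invoke Proposition~\ref{prop:ideal_of_closure}, which gives the identity
\[
\cI_X=\bigcap_{\fp\in\Spec(R)}\bigl\{f\in R[M]\,\bigm|\,f_{\kpbar}\in\cI_{X(\kpbar)}\bigr\}.
\]
The right-hand side depends on $X$ only through the sets $\cI_{X(\kpbar)}\subseteq R[M]\otimes_R\kpbar$, which in turn depend only on the subsets $X(\kpbar)\subseteq\kpbar\otimes M$. Consequently, if $X$ and $Y$ are two closed subsets of $\AA_M$ with $X(\kpbar)=Y(\kpbar)$ for every $\fp\in\Spec(R)$, then $\cI_X=\cI_Y$, whence $X=\cV(\cI_X)=\cV(\cI_Y)=Y$.

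There is no real obstacle here: once Proposition~\ref{prop:ideal_of_closure} is in hand, the corollary is formal, and the only step one has to check is the tautological identity $X=\cV(\cI_X)$ for closed subsets. The content of the corollary lies entirely in the Nullstellensatz-based argument already used to prove Proposition~\ref{prop:ideal_of_closure}.
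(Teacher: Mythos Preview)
Your proof is correct and follows exactly the same approach as the paper: invoke Proposition~\ref{prop:ideal_of_closure} together with the identity $X=\cV(\cI_X)$ for closed subsets. The paper's own proof is the single sentence ``This follows from the previous proposition since $X=\cV(\cI_X)$,'' and you have simply spelled out the details.
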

\begin{proof}
This follows from the previous proposition since $X=\cV(\cI_X)$.
\end{proof}

The proof of Theorem~\ref{thm:Main} in Section~\ref{sec:Proof} follows
a divide-and-conquer strategy in which the following two lemmas
and their generalisations to closed subsets of polynomial functors
(Lemmas~\ref{lm:Branching}
and~\ref{lm:integralextensions_polyfunctor}),
play a crucial role.

\begin{lm} \label{lm:Branching1}
Let $R$ be a ring with Noetherian spectrum and $r$ an element of $R$. Let
$\fp_1,\ldots,\fp_k$ be the minimal primes of $R/(r)$. Then
two closed subsets $X,Y\subseteq\AA_M$ are equal if and only if
$X_{R[1/r]}=Y_{R[1/r]}$ and $X_{R/\fp_i}=Y_{R/\fp_i}$ for
all $i=1,\ldots,k$.
\end{lm}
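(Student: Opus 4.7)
The forward implication is automatic, since $X_{R[1/r]}$ and $X_{R/\fp_i}$ are defined by restricting $X$ to base changes. The substance is in the converse, and I would attack it via Corollary~\ref{cor:Kpbarpoints}: it suffices to verify $X(\kpbar)=Y(\kpbar)$ for every $\fp\in\Spec(R)$, since this determines a closed subset uniquely.

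The plan is to split $\Spec(R)$ into the distinguished open $D(r)$ and the closed complement $V(r)$, and to show that each prime $\fp$ is ``covered'' by either the localisation $R[1/r]$ or one of the quotients $R/\fp_i$, so that the value $X(\kpbar)$ is recovered from the corresponding base change. Concretely, fix $\fp\in\Spec(R)$ and consider the canonical map $R\to R/\fp\to K_\fp\hookrightarrow\kpbar$. If $r\notin\fp$, then the image of $r$ in $\kpbar$ is invertible, so this map factors through $R[1/r]$; viewing $\kpbar$ as an $R[1/r]$-domain, the definition of base change gives
\[
X(\kpbar)=X_{R[1/r]}(\kpbar)=Y_{R[1/r]}(\kpbar)=Y(\kpbar).
\]
If instead $r\in\fp$, then $\fp$ contains the ideal $(r)$, hence contains some minimal prime of $(r)$; that is, $\fp\supseteq\fp_i$ for some $i\in\{1,\ldots,k\}$. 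Here I use that for any ring, every prime containing a given ideal contains a minimal prime over that ideal (a routine Zorn argument), while the Noetherianity of $\Spec(R)$ is what guarantees finiteness of the list $\fp_1,\ldots,\fp_k$. Then $\kpbar$ is naturally an $R/\fp_i$-domain, and
\[
X(\kpbar)=X_{R/\fp_i}(\kpbar)=Y_{R/\fp_i}(\kpbar)=Y(\kpbar),
\]
using the hypothesis $X_{R/\fp_i}=Y_{R/\fp_i}$.

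Thus $X(\kpbar)=Y(\kpbar)$ in both cases, which by Corollary~\ref{cor:Kpbarpoints} yields $X=Y$. There is no real obstacle here; the only point worth being careful about is the interaction between base change and evaluation at $\kpbar$, namely that viewing an $R$-domain as a $B$-domain (when it admits a compatible $B$-structure) does not change the set of points of a closed subset—this is immediate from Definition~\ref{de:SubsetAM} since the subset-of-$\AA_M$ rule is determined pointwise and base change is just reinterpretation of the same underlying tensor product.
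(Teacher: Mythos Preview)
Your proof is correct and follows essentially the same approach as the paper: both invoke Corollary~\ref{cor:Kpbarpoints} and split into cases according to whether $r$ lies in the prime $\fp$ (equivalently, whether the image of $r$ in $K_\fp$ vanishes), then factor through $R[1/r]$ or some $R/\fp_i$ accordingly. Your version is slightly more explicit about why $\fp\supseteq\fp_i$ for some $i$ when $r\in\fp$, which the paper leaves implicit.
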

\begin{proof}
Suppose that $X_{R[1/r]}=Y_{R[1/r]}$ and $X_{R/\fp_i}=Y_{R/\fp_i}$ for
all $i=1,\ldots,k$. Let~$K$ be an $R$-field and let $R\to K$ be the corresponding homomorphism. If the image of $r$ in $K$ is zero, then $R \to K$ factors via $R/\fp_i$ for some $i =1, \ldots, k$ and hence $K$ is a $(R/\fp_i)$-domain. In this case, we have $X(K) = X_{R/\fp_i}(K)= Y_{R/\fp_i}(K)=Y(K)$. If the image of $r$ in $K$ is nonzero, then $K$ naturally is an $R[1/r]$-field. In this case, we have $X(K) = X_{R[1/r]}(K)= Y_{R[1/r]}(K)=Y(K)$. So $X=Y$ by Corollary~\ref{cor:Kpbarpoints}.
\end{proof}

\begin{lm}\label{lm:integralextensions_module}
Let $R\subseteq R'$ be a finite extension of domains and let $X,Y\subseteq\AA_M$ be closed subsets. Then $X=Y$ if and only if $X_{R'}=Y_{R'}$.
\end{lm}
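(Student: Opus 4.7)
The plan is to reduce equality of the two closed subsets to equality at the algebraic closures $\overline{K_{\fp}}$ via Corollary~\ref{cor:Kpbarpoints}. The ``only if'' direction is immediate from the definition of base change. So I focus on the ``if'' direction: assuming $X_{R'}=Y_{R'}$, I must show that $X(\kpbar)=Y(\kpbar)$ for every $\fp\in\Spec(R)$.

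Fix such a $\fp$. Since $R\subseteq R'$ is a finite (hence integral) extension of domains, lying-over gives a prime $\fp'\in\Spec(R')$ with $\fp'\cap R=\fp$. The induced injection $R/\fp\hookrightarrow R'/\fp'$ extends to a finite, in particular algebraic, field extension $K_\fp\subseteq K_{\fp'}$. Consequently $\overline{K_{\fp'}}$ is itself an algebraic closure of $K_\fp$, and after choosing a $K_\fp$-embedding we obtain an injection of $R$-domains $\iota\colon\kpbar\hookrightarrow\overline{K_{\fp'}}$.

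Next, I claim that for any closed subset $Z$ of $\AA_M$ one has
\[
Z(\kpbar)=(\iota\otimes\id_M)^{-1}\bigl(Z(\overline{K_{\fp'}})\bigr).
\]
The inclusion $\subseteq$ is functoriality. For the reverse inclusion, write $Z=\cV(S)$; for any $f\in S$, the polynomial-law compatibility diagram associated with $\iota$ gives $f_{\overline{K_{\fp'}}}\bigl((\iota\otimes\id_M)(m)\bigr)=\iota\bigl(f_{\kpbar}(m)\bigr)$ for all $m\in\kpbar\otimes M$. Since $\iota$ is injective, the vanishing of the left-hand side (i.e.\ $(\iota\otimes\id_M)(m)\in Z(\overline{K_{\fp'}})$) forces the vanishing of the right-hand side, giving $m\in Z(\kpbar)$.

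Finally, since $\overline{K_{\fp'}}$ is an $R'$-domain, the hypothesis yields $X(\overline{K_{\fp'}})=X_{R'}(\overline{K_{\fp'}})=Y_{R'}(\overline{K_{\fp'}})=Y(\overline{K_{\fp'}})$. Applying the preimage identity to $X$ and to $Y$ produces $X(\kpbar)=Y(\kpbar)$, and Corollary~\ref{cor:Kpbarpoints} finishes the proof. The only nontrivial ingredient is the availability of a prime of $R'$ above $\fp$, which is exactly what integrality of the extension guarantees; the rest is bookkeeping with the functorial nature of closed subsets.
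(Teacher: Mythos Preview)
Your proof is correct and follows essentially the same approach as the paper: both invoke lying-over for the finite extension $R\subseteq R'$ and then appeal to Corollary~\ref{cor:Kpbarpoints}. The paper's proof is a terse one-liner, while you have carefully spelled out the bookkeeping (the embedding $\iota\colon\kpbar\hookrightarrow\overline{K_{\fp'}}$ and the preimage identity $Z(\kpbar)=(\iota\otimes\id_M)^{-1}(Z(\overline{K_{\fp'}}))$) that the paper leaves implicit.
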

\begin{proof}
The extension $R\subseteq R'$ satisfies lying over, i.e., for every prime $\fp\in\Spec(R)$ there is a prime $\fq\in\Spec(R')$ with $\fp=\fq\cap R$. The lemma follows by Corollary~\ref{cor:Kpbarpoints}.
\end{proof}

\subsection{Noetherianity of $\AA_M$} \label{ssec:TopNoeth}
We now prove Proposition~\ref{prop:TopHilbert}. Thus let $R$
be a ring. 

\begin{lm}
If $\Spec(R)$ is Noetherian, then so is $\Spec(R[x])$. 
\end{lm}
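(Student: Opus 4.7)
The plan is Noetherian induction on the Noetherian space $\Spec(R)$: I will prove that $\Spec((R/J)[x])$ is Noetherian for every radical ideal $J \subseteq R$, assuming the analogous assertion for all strictly larger radical ideals $J' \supsetneq J$. After replacing $R$ by $R/J$, and using $\Spec(R)=\Spec(R_{\red})$, I may assume $R$ is reduced and it suffices to establish the statement for $R$ knowing it for $R/J''$ for every nonzero radical ideal $J'' \subseteq R$. By Noetherianity of $\Spec(R)$, the reduced ring $R$ has only finitely many minimal primes $\fp_1,\ldots,\fp_k$. Every prime of $R[x]$ contracts to a prime of $R$ containing some $\fp_i$, and hence contains $\fp_i R[x]$, so $\Spec(R[x]) = \bigcup_{i=1}^{k} V(\fp_i R[x])$ with $V(\fp_i R[x]) \cong \Spec((R/\fp_i)[x])$. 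If $k\geq 2$, each $\fp_i$ is nonzero and the inductive hypothesis implies each $V(\fp_i R[x])$ is Noetherian; a finite union of Noetherian closed subspaces is Noetherian. This reduces the problem to the case where $R$ is a domain.

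For the domain case, let $K:=\Frac(R)$, and consider a descending chain $Y_1 \supseteq Y_2 \supseteq \cdots$ of closed subsets of $\Spec(R[x])$, corresponding to an ascending chain $I_1\subseteq I_2\subseteq\cdots$ of radical ideals of $R[x]$. Since $K[x]$ is a PID, the extended chain $I_nK[x]$ stabilises at some $N$: $I_nK[x]=I_NK[x]=:(g)$ for $n\geq N$. Clearing denominators I may take $g\in R[x]$, and I choose $r\in R\setminus\{0\}$ with $rg\in I_N$. Setting $a:=r\cdot\mathrm{lc}(g)\in R\setminus\{0\}$, the polynomial $g$ has unit leading coefficient in $R[1/a][x]$, so polynomial division by $g$ is available there. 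For $f\in I_nR[1/a][x]$ with $n\geq N$, writing $f=qg+s$ with $q,s\in R[1/a][x]$ and $\deg s<\deg g$, the fact that $s=f-qg\in(g)K[x]$ forces $s=0$; combined with $g\in I_NR[1/a][x]$ (as $r$ is a unit in $R[1/a]$), this gives $I_nR[1/a][x]=I_NR[1/a][x]$ for all $n\geq N$.

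To conclude, observe that $\Spec(R[x])$ is the union of the open subset $\Spec(R[1/a][x])$ and the closed subset $\Spec((R/(a))[x])$. The previous paragraph shows that the chain stabilises on the open piece, and on the closed piece the inductive hypothesis applies because $a\neq 0$ in the domain $R$ makes $V(a)\subsetneq\Spec(R)$ a proper closed subset, so the chain stabilises there too. Since every closed subset of $\Spec(R[x])$ is determined by its intersections with the two pieces of this cover, the original chain stabilises. The main technical obstacle is producing the single element $a$ witnessing stabilisation over the open stratum: Noetherianity of $K[x]$ only supplies an equality of ideals after passing to the fraction field, and inverting both a clearing factor $r$ for the generator and the leading coefficient of $g$ is precisely what makes ordinary polynomial division recover that equality already over $R[1/a][x]$.
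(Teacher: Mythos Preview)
Your argument is correct, with one small omission: you should dispose of the case $g=0$ separately, since then $\mathrm{lc}(g)$ is undefined; but $I_N K[x]=0$ together with the injectivity of $R[x]\hookrightarrow K[x]$ (as $R$ is a domain) forces all $I_n=0$, so the chain is constant and there is nothing to prove.

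The paper's own proof is simply a one-line citation of \cite[Theorem~1.1]{erman-sam-snowden4} (an equivariant Hilbert basis theorem) specialised to the trivial group, so your approach is genuinely different and entirely self-contained. Your Noetherian induction on $\Spec(R)$---reducing to the domain case via the finitely many minimal primes, and then exploiting the Euclidean structure of $K[x]$ to manufacture a single element $a\in R$ over whose localisation the chain stabilises---is precisely the elementary argument one would want here; it makes transparent exactly where the hypothesis on $\Spec(R)$ is used and avoids any external reference. The trade-off is length: a one-line citation versus a half page. Both are valid, and yours has the advantage of being readable without tracking down another paper.
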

\begin{proof}
This is an application of \cite[Theorem 1.1]{erman-sam-snowden4} with trivial group.
\end{proof}

\begin{lm} \label{lm:FreeCase}
Assume that $\Spec(R)$ is Noetherian and set $N:=R^n$. Then $\AA_{N}$
is Noetherian, i.e., any chain $X_1 \supseteq X_2 \supseteq \cdots $
of closed subsets of $\AA_{N}$ stabilises eventually.
\end{lm}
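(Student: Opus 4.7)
The plan is to reduce to Noetherianity of $\Spec(R[x_1,\ldots,x_n])$ via the bijection between closed subsets of $\AA_N$ and closed subsets of $\Spec(R[N])$ that was established earlier in this section (the proposition identifying $\cV(S)$ with $\{\fq \supseteq S\}$ and showing the correspondence is a bijection). Since $N = R^n$ is free of rank $n$, the coordinate ring $R[N]$ equals the polynomial ring $R[x_1,\ldots,x_n]$, so $\AA^n_R = \Spec R[x_1,\ldots,x_n]$ classifies the closed subsets of $\AA_N$.

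First I would observe that by the preceding lemma, if $\Spec(R)$ is Noetherian, then so is $\Spec(R[x])$. Applying this iteratively $n$ times to $R, R[x_1], R[x_1,x_2], \ldots$ shows that $\Spec(R[x_1,\ldots,x_n])$ is Noetherian; that is, any descending chain of closed subsets there stabilises. A straightforward induction on $n$ makes this precise.

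Now given a descending chain $X_1 \supseteq X_2 \supseteq \cdots$ of closed subsets of $\AA_N$, the bijection in the proposition sends it to a descending chain $Z_1 \supseteq Z_2 \supseteq \cdots$ of closed subsets of $\AA^n_R$, which must stabilise by the previous paragraph. Since the bijection is order-preserving (indeed the map $X \mapsto \{\fq_x \mid x \in X(D), D \in \DomR\}$ obviously preserves inclusions, and its inverse does too because $\cV$ is inclusion-reversing while radicalisation preserves containment), the stabilisation of the $Z_i$ forces the original chain $X_i$ to stabilise. There is no real obstacle here: the lemma is essentially a repackaging of Hilbert basis at the level of Noetherian spectra, the hard work having been done in the proposition identifying $\AA_N$ with $\AA^n_R$ in the free case and in the topological form of Hilbert's basis theorem cited just above.
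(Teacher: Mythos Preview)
Your proof is correct and follows essentially the same route as the paper: both invoke the preceding lemma iteratively to get that $\Spec(R[x_1,\ldots,x_n])$ is Noetherian, and both use the correspondence between closed subsets of $\AA_N$ and closed subsets of $\Spec(R[N])$ (the paper phrases it via the ascending chain of radical ideals $\cI_{X_i}$ and the identity $X_i=\cV(\cI_{X_i})$, which is exactly your bijection unwound).
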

\begin{proof}
Consider the chain $\cI_{X_1}\subseteq\cI_{X_2}\subseteq\cdots$ of radical ideals in $R[N] \cong
R[x_1,\ldots,x_n]$. Since the latter ring has a topological spectrum, this chain stabilises. 
Since $X_i=\cV(\cI_{X_i})$, so does the chain $X_1 \subseteq X_2 \subseteq \cdots$.
\end{proof}

\begin{proof}[Proof of Proposition~\ref{prop:TopHilbert}.]
Let $R$ be a ring with Noetherian spectrum, let $M$ be a finitely
generated $R$-module, and let $X_1 \supseteq X_2 \supseteq \cdots$ be
a chain of closed subsets of $\AA_M$. Since $M$ is finitely generated,
there exists a surjective $R$-module homomorphism $\phi\colon N:=R^n \to M$
for some $n$. This defines a (linear) polynomial law $N \to M$ and so a
continuous map $\AA_N \to \AA_M$. Set $Y_i:= \phi^{-1}(X_i)$, which is
the closed subset of $\AA_N$ such that $Y_i(D) = (1 \otimes \phi)^{-1}
(X_i(D))$ for all $R$-domains $D$. By Lemma~\ref{lm:FreeCase}, the chain $Y_1 \supseteq Y_2
\supseteq \cdots$ stabilises, i.e., $Y_n=Y_{n+1}$ for all $n \gg 0$. So,
since $1 \otimes \phi\colon D \otimes N \to D \otimes M$ is surjective for
every $R$-domain $D$, we have $X_i(D)=(1 \otimes\phi)(Y_i(D))$ for every $i$ and~$D$, and therefore $X_n=X_{n+1}$ for all $n\gg 0$.
\end{proof}

\begin{re} \label{re:SpecRM}
If two ideals $I$ and $J$ in $R[M]$ define the same closed subset in
$\Spec(R[M])$, then they have the same radical and hence define the
same closed subset in $\AA_M$. But it could possibly happen that two ideals
that define the same closed subset in $\AA_M$ do {\em not} define the
same closed subset in $\Spec(R[M])$. In particular, the proof above does
not show that $\Spec(R[M])$ is a Noetherian topological space. Indeed,
we don't know whether this is the case.
\end{re}

\begin{que}
Suppose that $\Spec(R)$ is Noetherian and let $M$ be a finitely generated
$R$-module. Is $\Spec(R[M])$ Noetherian? Is the map from
radical ideals of $R[M]$ to closed subsets of $\AA_M$ a
bijection?
\end{que}

\subsection{Dimension}

\begin{prop} \label{prop:DimensionConstructible}
Let $R$ be a domain, let $M$ be a finitely generated $R$-module and
let $X$ be a closed subset of $\AA_M$. Then the function 
\begin{align*}
\Spec(R) &\to\ZZ_{\geq -1}\\
\fp &\mapsto \dim_{\overline{K_\fp}}( X(\overline{K_\fp}))
\end{align*}
is constant in some open dense subset $\Spec(R[1/r])$ of $\Spec(R)$.
\end{prop}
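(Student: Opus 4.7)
The plan is to reduce to the case where $M$ is free using Lemma~\ref{lm:GenericFreeness}, perform a Noether normalization of the generic fiber, spread that structure over a dense open subset of $\Spec R$, and finally invoke Lemma~\ref{lm:GenericFreeness} once more to transport the dimension count to every residue field in that open set. Using Lemma~\ref{lm:GenericFreeness} first on $M$, after a localization of $R$ I may assume $M=R^n$ is free, so that $R[M]=R[x_1,\ldots,x_n]$. Set $A:=R[x]/\cI_X$ and $K:=\Frac R$; then for each $\fp\in\Spec R$ the closed subset $X(\kpbar)\subseteq\kpbar^{\,n}$ is cut out by $\cI_X$ and its dimension equals $\dim(\kpbar\otimes_R A)$. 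Let $d:=\dim X(\overline K)=\dim(K\otimes A)$. (The edge case $d=-1$, i.e.\ $K\otimes A=0$, produces after clearing denominators some nonzero $r\in R\cap\cI_X$, giving $X(\kpbar)=\emptyset$ for every $\fp\in\Spec R[1/r]$; so assume henceforth $d\geq 0$.)

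Next, I would apply Noether normalization to the finitely generated $K$-algebra $K\otimes A$, producing elements $y_1,\ldots,y_d\in R[x]$ whose images are algebraically independent in $K\otimes A$ over $K$, together with elements $w_1,\ldots,w_m\in R[x]$ whose images generate $K\otimes A$ as a module over $K[y_1,\ldots,y_d]$. The finitely many witnessing relations---expressing $1$ and each $x_iw_j$ as $K[y]$-linear combinations of the $w_k$'s modulo $\cI_X K[x]$---involve only finitely many elements of $\cI_X$, so after one more localization of $R$ those identities hold in $A$ with coefficients in the polynomial subring $R[y]:=R[y_1,\ldots,y_d]$. Consequently $A$ becomes a finitely generated $R[y]$-module, and $R[y]\hookrightarrow A$ is injective since any nontrivial $R$-polynomial relation would survive over $K$ and contradict algebraic independence there.

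I would then apply Lemma~\ref{lm:GenericFreeness} a second time, this time to $A$ viewed as a finitely generated module over the domain $R[y]$: there exists a nonzero $s\in R[y]$ such that $A[1/s]$ is a free $R[y][1/s]$-module of some rank $N$, with $N\geq 1$ because the localized injection $R[y][1/s]\hookrightarrow A[1/s]$ prevents the target from being zero. Picking any nonzero coefficient $r'\in R$ of $s$ when $s$ is expanded as a polynomial in $y_1,\ldots,y_d$, I absorb $r'$ into a cumulative localizer $r\in R$ incorporating the previous localizations as well.

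For $\fp\in\Spec R[1/r]$ the reduction $\bar s\in\kpbar[y]$ is then nonzero; since localization commutes with base change, $(\kpbar\otimes_R A)[1/\bar s]$ is a free $\kpbar[y][1/\bar s]$-module of rank $N\geq 1$, hence faithfully flat over $\kpbar[y][1/\bar s]$, and the map $\Spec(\kpbar\otimes_R A)[1/\bar s]\to\Spec\kpbar[y][1/\bar s]$ is surjective. Therefore the image of the finite---hence closed---morphism $\Spec(\kpbar\otimes_R A)\to\AA^d_{\kpbar}$ contains the non-empty open locus $\{\bar s\neq 0\}$ and so equals all of $\AA^d_{\kpbar}$, giving $\dim(\kpbar\otimes_R A)\geq d$. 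Combining this with the opposite inequality $\dim(\kpbar\otimes_R A)\leq d$ from the finite module structure yields $\dim X(\kpbar)=d$ on all of $\Spec R[1/r]$. The main obstacle will be the careful two-fold use of Lemma~\ref{lm:GenericFreeness}: the first application removes the module-theoretic complexity of $M$ so that the coordinate ring becomes an honest polynomial ring, while the second produces a ``generic freeness in the $y$-direction'' whose coefficients provide the single element $r'\in R$ whose nonvanishing modulo $\fp$ transports the Noether normalization structure to the fiber over $\fp$.
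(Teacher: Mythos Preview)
Your argument is correct, and it takes a genuinely different route from the paper. After the common first step of reducing to free $M$ via Lemma~\ref{lm:GenericFreeness}, the paper proceeds by Gr\"obner bases: it fixes a monomial order, uses Dickson's lemma to find an $r$ for which the upper ideal of leading monomials of monic elements of $R[1/r]\otimes I$ is maximal, checks that the resulting generators form a Gr\"obner basis over $R[1/r]$ and hence over every $K_\fp$ with $\fp\in\Spec(R[1/r])$, and reads off the dimension from the initial ideal. Your approach instead Noether-normalises the generic fibre, spreads the integrality relations over $R$ by clearing finitely many denominators, and then invokes Lemma~\ref{lm:GenericFreeness} a second time over the auxiliary domain $R[y_1,\ldots,y_d]$ to force freeness of positive rank on a principal open; a nonzero coefficient of the resulting $s$ gives the localiser in $R$, and on each fibre the finite surjection onto $\AA^d_{\kpbar}$ pins down the dimension.

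Both arguments are classical in spirit. The paper's Gr\"obner approach is more elementary and algorithmic---it needs only Dickson's lemma and Buchberger's criterion, with no appeal to finite-morphism theory---and it has the side benefit of producing explicit generators whose reductions form Gr\"obner bases fibrewise. Your approach is more geometric and reuses Lemma~\ref{lm:GenericFreeness} in a pleasing way, but it imports slightly more commutative algebra (Noether normalisation and the dimension behaviour of finite surjective morphisms). Either is perfectly acceptable here.
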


\begin{proof}
By Lemma~\ref{lm:GenericFreeness}, there exists a nonzero $r\in R$
such that $R[1/r] \otimes M$ is free. It suffices to prove the
statement for the domain $R[1/r]$, the $R[1/r]$-module $R[1/r]
\otimes M$ and the closed subset $X_{R[1/r]}$ of
$\AA_{R[1/r]\otimes M}$. So we may assume that $M$ is free, say
of rank $m$, and so $X$ is a closed subset of $\AA^m_R$; let $I \subseteq R[x_1,\ldots,x_m]$ be its vanishing ideal. Choose an arbitrary monomial order on monomials in $x_1,\ldots,x_m$. For each nonzero $r \in
R$, let $M_r$ be the set of leading monomials of {\em monic}
polynomials in $R[1/r] \otimes I$; this is an upper ideal in the monoid of
monomials. By Dickson's lemma, there exists an $r$ such that $M_r$
is inclusion-wise maximal. Choose monic polynomials $f_1,\ldots,f_k
\in R[1/r][x_1,\ldots,x_n]$ whose leading
monomials generate the upper ideal~$M_r$. Then $f_1,\ldots,f_k$
generate the ideal $R[1/r] \otimes I$---indeed, otherwise there
would be some element $f$ in the latter ideal whose leading monomial is
not divisible by any of the leading monomials of the $f_i$; and letting
$r'$ be the leading coefficient of $f$ we would find that $M_{rr'}$
strictly contains $M_r$, a contradiction. Moreover,
again by maximality of $M_r$, the $f_i$ 
satisfy Buchberger's criterion: every S-polynomial of them
reduces to zero modulo $f_1,\ldots,f_k$ when working over
$R[1/r][x_1,\ldots,x_m]$. Then for each $\fp \in
\Spec(R[1/r])$, the images of the $f_i$ generate the ideal $K_\fp
\otimes I=K_\fp \otimes_{R[1/r]} (R[1/r]
\otimes I)$; and still satisfy Buchberger's criterion. Hence
these images form a Gr\"obner basis, and since the dimension
of $X(\overline{K_\fp})$ can be read of from the set of
leading monomials, that dimension is constant for $\fp \in
\Spec(R[1/r])$. 
\end{proof}

\begin{prop} \label{prop:Vanishing}
Let $R$ be a domain, $M$ a finitely generated $R$-module, and $X$ a closed
subset of $\AA_M$. Then there exists a nonzero $r \in R$ such that the
following holds: for any $f \in R[M]$, if $f$ vanishes identically on
$X(\overline{K})$, then $f$ vanishes identically on $X(\overline{K_\fp})$
for all $\fp \in \Spec(R[1/r])$.
\end{prop}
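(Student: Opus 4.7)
The plan is to piggyback on the Gr\"obner basis construction from the proof of Proposition~\ref{prop:DimensionConstructible}. First I would apply Lemma~\ref{lm:GenericFreeness} together with Proposition~\ref{prop:Localisation} to localise at some nonzero $r_0\in R$ so that $R[1/r_0]\otimes M$ is free of rank $m$ and $R[1/r_0]\otimes R[M] \cong R[1/r_0][x_1,\ldots,x_m]$; the closed subset $X_{R[1/r_0]}$ then corresponds to a radical ideal $I$ in this polynomial ring. Rerunning the Dickson-plus-Buchberger argument from Proposition~\ref{prop:DimensionConstructible}, I obtain a further localisation $R[1/r]$ (with $r_0\mid r$) together with \emph{monic} polynomials $f_1,\ldots,f_k \in R[1/r][x_1,\ldots,x_m]$ that generate $R[1/r]\otimes I$ and satisfy Buchberger's criterion.

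Because the $f_i$ are monic, the division algorithm produces a well-defined normal form for any polynomial in $A[x_1,\ldots,x_m]$ for any $R[1/r]$-algebra $A$, and this normal form commutes with base change along $R[1/r]\to A$. Now fix $f\in R[M]$ vanishing on $X(\kbar)$, and write $\tilde f$ for its image in $K[x_1,\ldots,x_m]$ under the identifications above. Then $\tilde f$ vanishes on the common $\kbar$-zeros of $I$, so Hilbert's Nullstellensatz over $\kbar$ combined with the faithful flatness of $\kbar/K$ yields $\tilde f^N \in K\otimes I$ for some $N\geq 1$. Let $\overline{f^N}\in R[1/r][x_1,\ldots,x_m]$ denote the normal form of $f^N$ modulo $f_1,\ldots,f_k$; its image in $K[x_1,\ldots,x_m]$ is the normal form of $\tilde f^N$, which is zero. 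Since the map $R[1/r][x_1,\ldots,x_m] \to K[x_1,\ldots,x_m]$ is injective, $\overline{f^N}=0$ already over $R[1/r]$, and therefore $f^N \in R[1/r]\otimes I$.

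The conclusion is then immediate: for each $\fp\in\Spec(R[1/r])$ the ring $\kpbar$ is an $R[1/r]$-domain, so $f^N_{\kpbar}$ vanishes on $X(\kpbar)$, and since $\kpbar$ is a field this forces $f_{\kpbar}$ to vanish on $X(\kpbar)$ as well. The main obstacle I anticipate is verifying that Gr\"obner-style division really does commute with the base change $R[1/r]\hookrightarrow K$: this is precisely where the monicness of the $f_i$ (rather than the mere fact that they generate $R[1/r]\otimes I$) is indispensable, since otherwise denominators appearing when reducing over $K$ could destroy the descent of the normal form back to $R[1/r]$.
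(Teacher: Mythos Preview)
Your proof is correct and follows essentially the same route as the paper's. The only variation is that you reuse verbatim the monic Gr\"obner basis of $K\otimes I$ from Proposition~\ref{prop:DimensionConstructible}, whereas the paper modifies that construction to obtain a monic Gr\"obner basis of $\sqrt{K\otimes I}$ instead (by letting $M_r$ be the set of leading monomials of monic polynomials some \emph{power} of which lies in $R[1/r]\otimes I$); since both arguments ultimately reduce a power $f^N$ to zero via monic division over $R[1/r]$ and then pass modulo each $\fp$, this difference is purely cosmetic, and your direct reuse of Proposition~\ref{prop:DimensionConstructible} is arguably the cleaner option.
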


\begin{proof}
As in the previous proof, it suffices to prove the statement in the case that 
$M$ is free of rank $m$. Let $I \subseteq R[x_1,\ldots,x_m]$
be the vanishing ideal of $X$. This time, for each nonzero $r\in R$,
let $M_r$ be the set of leading monomials of {\em monic} polynomials in
$R[1/r][x_1,\ldots,x_m]$ {\em some power of which} lies in
$R[1/r] \otimes I$. Choose $r$ such that $M_r$ is maximal,
and $f_1,\ldots,f_k \in R[1/r][x_1,\ldots,x_m]$ monic, whose
powers lie in $R[1/r] \otimes I$, and whose leading monomials
generate the upper ideal $M_r$. Then the images of $f_1,\ldots,f_k$
form a Gr\"obner basis of the radical ideal of $K \otimes I$. Now
assume that $f \in R[M]$ vanishes identically on $X(\overline{K})$,
and let $g$ be the image of $f$ in $R[1/r][x_1,\ldots,x_m]$. Then
by the Nullstellensatz, some power of $g$ reduces to zero modulo
$f_1,\ldots,f_k$. But then that reduction holds modulo $\fp$ for every
$\fp \in \Spec(R[1/r])$, so $g$ vanishes identically on $X(\overline{K_{\fp}})$
for all such $\fp$.
\end{proof}

\section{Polynomial functors and their properties}
\label{sec:Polynomial functors over a ring}

\subsection{Polynomial functors over a ring}
For reasons that will become clear later, we will only be interested in
polynomial functors from the category $\fgfModR$ of finitely generated
free $R$-modules into either $\ModR$ or $\fgModR$.

\begin{de}
A polynomial functor $P\colon\fgfModR\to\ModR$ consists 
of an object $P(U) \in \ModR$ for each object $U\in\fgfModR$ and a polynomial law
\[
P_{U,V}\colon\Hom(U,V) \to \Hom(P(U),P(V))
\]
for each $U,V \in \fgfModR$ such that the diagram
\begin{center}
\begin{tikzcd}
\Hom(V, W) \oplus \Hom (U, V) \arrow[rr, "-\circ-"] \arrow[d,"P_{V,W}\oplus P_{U,V}"] && \Hom(U, W) \arrow[d,"P_{U,W}"] \\
\Hom(P(V), P(W)) \oplus \Hom(P(U), P(V)) \arrow [rr, "-\circ-"] && \Hom(P(U), P(W))
\end{tikzcd}
\end{center}
commutes for every $U,V,W \in \fgfModR$. Here the bilinear horizontal polynomial laws are given as in Remark \ref{rem:composition}. 
Moreover, for every $U \in \fgfModR$, we require that $P_{U,U}(\id_U) = \id_{P(U)}$ and we require that $P$ has {\em finite degree}, i.e., there is a uniform bound $d \in \ZZ_{\geq 0}$ such
that for all $U,V$ the polynomial law $P_{U,V}$ has degree at most $d$.
\end{de}

Polynomial functors $\fgfModR \to \ModR$ form an Abelian category $\PFR$ in
which a morphism $\alpha\colon Q \to P$ is given by an $R$-linear map $\alpha_U\colon Q(U) \to P(U)$ for each $U \in \fgfModR$ such
that the diagram of polynomial laws 
\begin{center}
\begin{tikzcd}
\Hom(U, V)\arrow[rr, "Q_{U,V}"] \arrow[d,"P_{U,V}"] &&\Hom(Q(U),Q(V)) \arrow[d,"\alpha_V\circ-"] \\
\Hom(P(U), P(V))\arrow [rr, "-\circ\alpha_U"] && \Hom(Q(U), P(V))
\end{tikzcd}
\end{center}
commutes for all $U,V$. Note that post-composing with $\alpha_V$ and pre-composing with~$\alpha_U$
are $R$-linear maps and hence, indeed, (linear) polynomial laws.

For every $R$-algebra $A$ and $R$-modules $U,V,W$, let $-\circ_A-$ be the $A$-bilinear extension of the $R$-bilinear composition maps $-\circ-\colon \Hom(V,W)\times\Hom(U,V)\to\Hom(U,W)$. So $(-\circ_A-)_A$ is the polynomial law extending $-\circ-$. Then the
diagram above says that
\begin{equation} \label{eq:PUVA}
P_{U,V,A}(\phi) \circ_A(1\otimes \alpha_U) =(1 \otimes \alpha_V )\circ_A Q_{U,V,A}(\phi)
\end{equation}
for all $R$-algebras $A$ and $\phi\in A\otimes\Hom(U,V)$. Note that to check that the diagram commutes, it suffices to check that this equality holds for $A=R[x_1,\ldots,x_n]$ and $\phi=x_1\otimes \phi_1+\cdots+x_n\otimes \phi_n$ where $\phi_1,\ldots,\phi_n$ is a basis of $\Hom(U,V)$.

Recall that for all $R$-modules $U,V$, there is a natural $A$-linear map 
\[
A \otimes \Hom(U,V) \to \Hom_A(A \otimes U,A \otimes V).
\]
For $U,V \in\fgfModR$, this map is an isomorphism. Thus an element $\phi$ of $A
\otimes \Hom(U,V)$ can be thought of as an ``element of $\Hom(U,V)$ with
coordinates in~$A$''. Viewing $Q_{U,V,A}(\phi),P_{U,V,A}(\phi)$
as maps, \eqref{eq:PUVA} implies that the diagram 
\begin{center}
\begin{tikzcd}
A\otimes Q(U)\arrow[rr, "\alpha_{U,A}"] \arrow[d,"Q_{U,V,A}(\phi)"] &&A\otimes P(U) \arrow[d,"P_{U,V,A}(\phi)"] \\
A\otimes Q(V)\arrow [rr, "\alpha_{V,A}"] && A\otimes P(V)
\end{tikzcd}
\end{center}
commutes; here $\alpha_{U,A}$ is the $A$-linear extension of $\alpha_U$. When $A$ is a polynomial ring over $R$,  the map 
\[
A \otimes \Hom(Q(U),P(V)) \to \Hom_A(A \otimes Q(U),A \otimes P(V))
\]
is injective and so the reverse implication also holds. So the family
$(\alpha_U)_U$ is a morphism of polynomial functors if and only if
the last diagram above commutes for all $A,U,V\phi$. This is
closer to the definition of polynomial functors over infinite
fields, and generalises as follows. 

\begin{de} \label{de:PolyTrans}
Let $P,Q$ be polynomial functors. We define a {\em polynomial
transformation} $\alpha\colon Q\to P$ be a rule assigning to
every $U\in \fgfModR$ a polynomial law $\alpha_U\colon Q(U) \to
P(U)$ such that the last diagram above commutes for all $R$-algebras $A$ and $\phi\in A\otimes\Hom(U,V)$.
\end{de}

Just like polynomial laws generalise $R$-module homomorphisms,
and the latter are precisely the linear polynomial laws, polynomial
transformations generalise morphisms of polynomial laws, and the latter
are precisely the linear polynomial transformations.

\begin{re}
If $R$ is an infinite field, then a polynomial functor 
\[
P\colon \fgfModR \to\fgModR=\fgfModR
\]
is a the same thing as a functor from the category
of finite-dimensional $R$-vector spaces to itself such that for all
$U,V \in \fgfModR$ the map 
\[
P_{U,V}\colon\Hom(U,V) \to \Hom(P(U),P(V))
\]
is a polynomial map. This is the set-up in \cite{draisma}. 
If $R$ is a field but not necessarily infinite, then a polynomial functor
$\fgfModR \to \fgfModR$ is a strict polynomial functor in the sense
of Friedlander-Suslin \cite{friedlander-suslin}.
\end{re}

Many of our proofs will involve passing to the case of (infinite) fields
and invoking arguments from \cite{draisma}. This is facilitated by
the following construction.

\begin{de}[Base change]
Let $B$ be an $R$-algebra and let $P\colon\fgfModR\to\ModR$ be a polynomial functor. Then $P$ induces a polynomial functor $P_B$ from $\fgfModB$ to $\ModB$ as follows: first, for each finitely generated free
$B$-module $U$ fix a $B$-module isomorphism $\psi_U\colon U \to B \otimes U_R$,
where $U_R$ is a free $R$-module of the same $R$-rank as the $B$-rank
of $U$.  Then, set $P_B(U):=B \otimes P(U_R)$. Next, for each
$B$-algebra $A$, we need to assign to every $\phi \in A \otimes_B \Hom_B(U,V)$ an image in $A \otimes \Hom_B(P_B(U),P_B(V))$. For this, note that
\[\begin{array}{rll}
A \otimes_B \Hom_B(U,V) &\cong A \otimes_B \Hom_B(B \otimes
U_R,B \otimes V_R) &\\
&\cong A \otimes_B (B \otimes
\Hom(U_R,V_R))\\ &\cong A \otimes \Hom(U_R,V_R), 
\end{array}\]
where the isomorphism in the first step is $1_A \otimes_B (\psi_V \circ-\circ \psi_U^{-1})$ and the second isomorphism follows from the freeness of $U_R$ and $V_R$. Via these isomorphisms, $\phi$ is mapped
to an element of $A \otimes \Hom(U_R,V_R)$. Applying $P_{U_R,V_R,A}$
to this element yields an element of $A \otimes \Hom(P(U_R),P(V_R)) \cong
A \otimes_B (B \otimes \Hom(P(U_R),P(V_R)))$, and applying the natural map
$B \otimes \Hom(P(U_R),P(V_R)) \to \Hom_B(B \otimes P(U_R),B \otimes
P(V_R))$ in the second factor (which may not be an isomorphism since
$P(U_R),P(V_R)$ need not be free) yields an element of $A \otimes_B
\Hom_B(P_B(U),P_B(V))$. It is straightforward to check that
$P_B$ thus defined is a polynomial functor from $\fgfModB$
to $\ModB$. A different choice of isomorphisms $\psi_U$
yields a different but isomorphic polynomial functor $P_B$.
\end{de}

\begin{re}
In this construction we have made use of the fact that $P$ is
a polynomial functor from finitely generated {\em free} $R$-modules
to $R$-modules. The choice of $\psi_U$'s could have been avoided as
follows: instead of working with $\fgfModR$, we could have worked with
the category whose objects are finite sets and whose morphisms $J \to I$
are given by $I \times J$ matrices with entries in $R$. Then $P_{J,I}$
would have been a polynomial law from the module of $I \times J$ matrices
to $\Hom(P(J),P(I))$. However, the set-up we chose stresses better that
we are interested in phenomena that do {\em not} depend on a choice of
basis in our free modules.
\end{re}

\begin{de}
A polynomial functor $P\colon\fgfModR\to\ModR$ is called
homogeneous of degree $d$ if the polynomial law $P_{U,V}$ is homogeneous of
degree $d$ for each $U,V\in\fgfModR$. 
\end{de}

Every polynomial functor $P\colon\fgfModR \to \ModR$ is a direct sum $P_0
\oplus \cdots \oplus P_d$, where $P_i\colon\fgfModR \to \ModR$
is the homogeneous polynomial functor of degree~$i$ given on
objects by $P_i(V)=\{v\in P(V)\mid
P_{V,V,R[t]}(t\otimes\id_V)(v)=t^i\otimes v\}$; and $P_{i,U,V}$
is the restriction of the degree-$i$ component of the polynomial
law $P_{U,V}$ to
$P_i(U)$. Here we identify $R[t]\otimes\Hom(P(V),P(V))$ with $\Hom(P(V),R[t]\otimes P(V))$.

\subsection{Duality} \label{ssec:Duality}

\begin{de} \label{de:Duality}
Let $P\colon\fgfModR \to \ModR$ be a polynomial functor over $R$.  Then we
obtain another polynomial functor $P^*\colon\fgfModR \to \ModR$ by setting,
for each $V \in \fgfModR$, $P^*(V):=P(V^*)^*=\Hom(P(V^*),R)$ and for
each $\phi \in A \otimes \Hom(U,V)$, 
\[ P^*_{U,V,A}(\phi):=P_{V^*,U^*,A}(\phi^*)^*, \]
where $\phi^*$ is the image of $\phi$ under the natural
isomorphism 
\[
A \otimes \Hom(U,V) \cong A \otimes \Hom(V^*,U^*)
\]
(here we use that $U,V$ are free) and the outermost $*$ again represents
a dual. 
\end{de}

The dual functor $P^*$ of $P$ has the same degree as $P$ and will play
a role in \S\ref{ssec:Condition2}. To avoid having too many stars,
we will there think of it as the functor that sends $V^*$ to $P(V)^*$.
If $P$ takes values in $\fgfModR$, then $(P^*)^*$ is canonically
isomorphic to $P$.   

\subsection{Shifting}
\label{ssec:Shifting}

Let $U$ be a finitely generated free $R$-module.

\begin{de}
We define the shift functor $\Sh_U\colon\fgfModR\to\fgfModR$ that sends $V\mapsto U \oplus V$ and $\phi\mapsto\id_U\oplus\,\phi$. For a polynomial functor $P\colon \fgfModR \to \fgModR$ we set $\Sh_U(P):=P \circ \Sh_U$, called the {\em shift of $P$ by $U$}.
\end{de}

\begin{lm} \label{lm:Shift}
The composition $\Sh_U(P)$ is again a polynomial functor $\fgfModR
\to \fgModR$, the projection $U \oplus V \to V$ yields a surjection of
polynomial functors $\Sh_U(P) \to P$ and inclusion the $V \to U \oplus
V$ yields a section $P \to \Sh_U(P)$ to that surjection. In particular,
$\Sh_U(P) \cong P \oplus (\Sh_U(P)/P)$. Furthermore, $\Sh_U(P)/P$ has degree
strictly smaller than the degree of $P$.
\end{lm}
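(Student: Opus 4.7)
The first claim---that $\Sh_U(P)$ is a polynomial functor $\fgfModR\to\fgModR$---follows because $\Sh_U\colon\fgfModR\to\fgfModR$, defined by $V\mapsto U\oplus V$ and $\phi\mapsto\id_U\oplus\phi$, acts on morphisms as the sum of a constant and a linear map, hence as a polynomial law of degree $\leq 1$ from $\Hom(V,W)$ to $\Hom(U\oplus V,U\oplus W)$. Composing with $P_{U\oplus V,U\oplus W}$ via Proposition~\ref{prop:PolyLaws}(3) yields a polynomial law of degree $\leq d:=\deg P$, so $\Sh_U(P)$ is a polynomial functor of degree $\leq d$ landing in $\fgModR$. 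Write $p_V\colon U\oplus V\to V$ and $j_V\colon V\to U\oplus V$ for the canonical projection and inclusion, and set $\pi_V:=P(p_V)$, $\iota_V:=P(j_V)$; by functoriality $\pi_V\iota_V=P(\id_V)=\id_{P(V)}$. To see that $\pi$ and $\iota$ are polynomial transformations in the sense of Definition~\ref{de:PolyTrans}, I would apply $P$'s compatibility with composition to the commutative squares $\phi\circ p_V=p_W\circ(\id_U\oplus\phi)$ and $(\id_U\oplus\phi)\circ j_V=j_W\circ\phi$, tensored with any $R$-algebra $A$. Since $\PFR$ is Abelian, $\iota$ splits $\pi$ and we obtain $\Sh_U(P)\cong P\oplus(\Sh_U(P)/P)$; the quotient lands in $\fgModR$ as a direct summand of $P(U\oplus V)$.

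The heart of the proof is the strict degree drop. One may reduce to $P$ homogeneous of degree $d$, since both $\Sh_U(-)$ and the above splitting respect the homogeneous decomposition. Writing also $p_U\colon U\oplus V\to U$ and $j_U\colon U\to U\oplus V$, let $\sigma_U:=j_Up_U$ and $\sigma_V:=j_Vp_V$; these are orthogonal idempotents in $\End(U\oplus V)$ with $\sigma_U+\sigma_V=\id$. The polynomial law $R\oplus R\to\End(P(U\oplus V))$ sending $(s,t)\mapsto P(s\sigma_U+t\sigma_V)$ is homogeneous of degree $d$, so by the bihomogeneous decomposition of polynomial laws on direct sums (from \S\ref{sec:Polynomial laws}) it takes the form $\sum_{i+j=d}s^it^je_{i,j}$ for unique $e_{i,j}\in\End(P(U\oplus V))$. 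Applying $P(\alpha)P(\beta)=P(\alpha\beta)$ to $(s_1\sigma_U+t_1\sigma_V)(s_2\sigma_U+t_2\sigma_V)=s_1s_2\sigma_U+t_1t_2\sigma_V$ together with $P(\id)=\id$, comparison of coefficients forces the $e_{i,j}$ to be mutually orthogonal idempotents summing to $\id$. Hence $P(U\oplus V)=\bigoplus_{i+j=d}P^{(i,j)}(V)$, and the summand $P^{(0,d)}(V)=P(\sigma_V)(P(U\oplus V))$ coincides with $\iota_V(P(V))$.

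To upgrade this to a decomposition of polynomial functors with sharp degree bounds, I would apply the bihomogeneous decomposition a second time, to the polynomial law sending $(s,t,\phi)\in R\oplus R\oplus\Hom(V,W)$ to $P$ applied to the morphism $U\oplus V\to U\oplus W$ that is $s\cdot\id_U$ on the $U$-component and $t\phi$ on the $V$-component. This law is homogeneous of degree $d$ and its $t$-component is linear in $\phi$, so its bidegree-$(d-j,j)$ piece in $(s,t)$ has degree $j$ in $\phi$; setting $s=t=1$ yields $P(\id_U\oplus\phi)=\sum_{i+j=d}F_{i,j}(\phi)$ with $F_{i,j}$ of degree $j$ in $\phi$. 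Pre-composing with $P(s\sigma_U+t\sigma_V)$ on $U\oplus V$ and post-composing with the analogous idempotent decomposition on $U\oplus W$, and once more invoking $P(\alpha)P(\beta)=P(\alpha\beta)$ on the corresponding product identities, diagonalizes the situation: $F_{i,j}(\phi)$ carries $P^{(i,j)}(V)$ into $P^{(i,j)}(W)$ and kills every other summand. Consequently each $P^{(i,j)}$ is a polynomial subfunctor of $\Sh_U(P)$ of degree $\leq j$, and $\Sh_U(P)/P\cong\bigoplus_{i+j=d,\,j<d}P^{(i,j)}$ has degree $\leq d-1<d$. The principal obstacle is coordinating the two bihomogeneous decompositions so that the idempotents produced by the first simultaneously block-diagonalize the polynomial law handled by the second.
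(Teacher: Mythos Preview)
Your proof is correct and is precisely the argument the paper has in mind: the paper's own proof is a one-line reference to \cite[Lemma~14]{draisma}, stating that the infinite-field argument there carries over, and what you have written is exactly that carrying-over, with the weight-space decomposition under $(s\id_U,t\id_V)$ replaced by the bihomogeneous decomposition of polynomial laws so that it works over an arbitrary base ring. The identification $F_{i,j}(\phi)=P(\id_U\oplus\phi)\,e_{i,j}=e^W_{i,j}\,P(\id_U\oplus\phi)$, obtained from the two factorisations $s\id_U\oplus t\phi=(\id_U\oplus\phi)(s\sigma_U+t\sigma_V)=(s\sigma_U^W+t\sigma_W)(\id_U\oplus\phi)$, makes the final ``diagonalisation'' step immediate and removes the only point you flagged as a potential obstacle.
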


\begin{proof}
The proof in \cite[Lemma 14]{draisma} (in the case where $R$ is an infinite
field) carries over to the current more general setting.
\end{proof}

\subsection{Dimension functions of polynomial functors} 

Let $P\colon\fgfModR \to \fgModR$ be a polynomial functor. For $\fp \in
\Spec(R)$, set $f_\fp(n):=\dim_{K_{\fp}}(K_\fp \otimes P(R^n))$. It
turns out that these functions are polynomials in $n$, and depend
semicontinuously on $\fp$. To formalise this semicontinuity, we order
polynomials in $\ZZ[x]$ by $f \geq g$ if $f(n) \geq g(n)$ for all $n
\gg 0$; this is the lexicographic order on coefficients.

\begin{prop} \label{prop:DimensionFunction}
For each $\fp \in \Spec(R)$ the function $f_\fp\colon\ZZ_{\geq 0} \to
\ZZ_{\geq 0}$ is a polynomial with integral coefficients of degree
at most the degree of $P$. Furthermore, the map $\fp \mapsto f_\fp$
is upper semicontinuous on $\Spec(R)$ in a strong sense: both the sets
$\{\fp \mid f_\fp \geq f \}$ and $\{\fp \mid f_\fp > f\}$ are closed
for all $f \in \ZZ[x]$.
\end{prop}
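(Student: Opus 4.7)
I would induct on the degree $d$ of $P$. The base case $d=0$ says that $P$ is constant, valued in a single finitely generated $R$-module $M:=P(0)$ (with identity transition maps). Then $f_\fp(n)=\mu(M_\fp)$ is a constant integer polynomial, and the standard Fitting-ideal upper semicontinuity of $\fp\mapsto\mu(M_\fp)$ shows that $\{\mu(M_\fp)\geq c\}$ is closed for every $c\in\ZZ$; this handles every comparison $f_\fp\geq f$ and $f_\fp>f$ for $f\in\ZZ[x]$ (trivially when $\deg f>0$ the sets are either $\emptyset$ or $\Spec(R)$, and for constant $f$ they reduce to the closed sets above).

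For the inductive step, Lemma~\ref{lm:Shift} applied to $U=R$ splits $\Sh_R(P)\cong P\oplus Q$ with $Q:=\Sh_R(P)/P$ a polynomial functor $\fgfModR\to\fgModR$ of degree $\leq d-1$. Evaluating at $V=R^n$ gives $P(R^{n+1})\cong P(R^n)\oplus Q(R^n)$ as $R$-modules, and hence
\[
f_\fp(n+1)-f_\fp(n)=g_\fp(n),\qquad g_\fp(n):=\dim_{K_\fp}(K_\fp\otimes Q(R^n)).
\]
By induction $g_\fp=\sum_{i=0}^{d-1}\beta_i(\fp)\binom{n}{i}$ is an integer-valued polynomial of degree $\leq d-1$, and $\fp\mapsto g_\fp$ satisfies the strong semicontinuity. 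The hockey-stick identity $\sum_{k=0}^{n-1}\binom{k}{i}=\binom{n}{i+1}$ then produces
\[
f_\fp(n)=f_\fp(0)+\sum_{i=0}^{d-1}\beta_i(\fp)\binom{n}{i+1},
\]
an integer-valued polynomial of degree $\leq d$. Reading off coefficients in the basis $\bigl(\binom{n}{d},\ldots,\binom{n}{1},1\bigr)$, the coefficient vector of $f_\fp$ is $(\beta_{d-1}(\fp),\ldots,\beta_0(\fp),f_\fp(0))$; since this basis is related to $(n^d,\ldots,n,1)$ by a triangular change with positive diagonal, the lex order on coefficient vectors matches the order on $\ZZ[x]$ defined in the proposition.

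For the semicontinuity, given $f\in\ZZ[x]$ I would set $g(n):=f(n+1)-f(n)$, so that $f_\fp\geq f$ holds iff either $g_\fp>g$, or $g_\fp=g$ and $f_\fp(0)\geq f(0)$. The elementary set-theoretic identity $B\cup(B^c\cap A\cap C)=B\cup(A\cap C)$, applied with $B=\{g_\fp>g\}$, $A=\{g_\fp\geq g\}$ and $C=\{f_\fp(0)\geq f(0)\}$, yields
\[
\{\fp:f_\fp\geq f\}=\{\fp:g_\fp>g\}\cup\bigl(\{\fp:g_\fp\geq g\}\cap\{\fp:f_\fp(0)\geq f(0)\}\bigr),
\]
and an entirely parallel identity (with $f_\fp(0)\geq f(0)+1$ replacing $f_\fp(0)\geq f(0)$) handles $\{\fp:f_\fp>f\}$. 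Each term on the right is closed: the $g$-pieces by the induction hypothesis applied to $Q$, and the $f_\fp(0)$-pieces by the base case applied to the finitely generated $R$-module $P(0)$. Hence both sets are closed as finite unions of closed sets.

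The main obstacle is this last set-theoretic manoeuvre: the natural expression $\{g_\fp>g\}\cup(\{g_\fp=g\}\cap C)$ involves the locus $\{g_\fp=g\}$, which is only locally closed in general, and one needs to observe that enlarging it to the closed set $\{g_\fp\geq g\}\cap C$ does not alter the union with the closed set $\{g_\fp>g\}$. Once this identity is isolated, the rest of the induction is mechanical, combining Lemma~\ref{lm:Shift}, the hockey-stick identity, and the Fitting-ideal semicontinuity of $\mu(P(0)_\fp)$.
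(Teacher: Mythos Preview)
Your proposal is correct and follows essentially the same route as the paper's proof: induction on $\deg P$, the shift decomposition $\Sh_R(P)\cong P\oplus Q$ from Lemma~\ref{lm:Shift} to obtain $\Delta f_\fp=g_\fp$, and the reduction of the semicontinuity for $f_\fp$ to that for $g_\fp$ together with the degree-zero case for $f_\fp(0)$. Your explicit set-theoretic identity $\{g_\fp>g\}\cup(\{g_\fp\geq g\}\cap C)$ is exactly what the paper invokes when it writes ``either $g_\fp>\Delta f$, or else $g_\fp\geq\Delta f$ and $f_\fp(0)\geq f(0)$''; you have simply spelled out why the locally closed set $\{g_\fp=g\}$ can be replaced by the closed set $\{g_\fp\geq g\}$ without changing the union.
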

\begin{proof}
We proceed by induction on the degree of $P$. If $P$ has degree $0$,
then $P(R^n)$ is a fixed $R$-module $U$, and $f_\fp$ is the constant
polynomial that maps $n$ to $\dim_{K_\fp}(K_\fp \otimes U)$.  In this case,
if $f \in \ZZ[x]$ has positive degree, then $f_\fp > f$ and $f_\fp \geq
f$ are either both trivially true for all $\fp$ or both trivially false
for $\fp$ (depending on the sign of the leading coefficient of $f$),
so we need only look at constant $f$.

In this case, the result is classical; we recall the argument. Let $R^n
\to U$ be a surjective $R$-module homomorphism, and let $N$ be its
kernel. Since tensoring with $K_\fp$ is right-exact, $1 \otimes N$ spans
the kernel of the surjection $K_\fp^n \to K_\fp \otimes U$ for each $\fp$.

The statement that $\dim_{K_\fp}(K_\fp \otimes U)$ is upper semicontinuous
is therefore equivalent to the statement that dimension of the span of
$1 \otimes N$ in $K_\fp^n$ is lower semicontinuous.  And indeed, the
locus where this dimension is less than $k$ is defined by the vanishing
of all $k \times k$ subdeterminants of all $k \times n$ matrices (with
entries in $R$) whose rows are $k$ elements of $N$.

For the induction step, assume that the proposition is true for all
polynomial functors of degree $<d$ and assume that $P$ has degree $d \geq
1$. Then consider the functor $\Sh_R(P)$, which by Lemma~\ref{lm:Shift}
is isomorphic to $P \oplus Q$ for $Q:=\Sh_R(P)/P$ of degree $<d$.

By the induction hypothesis, the proposition holds for~$Q$: the function $g_\fp(n):=
\dim_{K_\fp}(K_\fp \otimes Q(R^n))$ equals a polynomial with integral
coefficients for all $n \geq 0$, and $\fp \mapsto g_\fp$ is
semicontinuous. Now we have
\[
\begin{array}{rll}
f_\fp(n+1)&= \dim_{K_\fp}(K_\fp \otimes P(R^1 \oplus R^n))&\\
&=\dim_{K_\fp} (K_\fp \otimes P(R^n)) + \dim_{K_\fp} (K_\fp \otimes Q(R^n))&=f_\fp(n) + g_\fp(n). 
\end{array} 
\]
This means that $f_\fp(n)$ is the unique polynomial
with $(\Delta f_\fp)(n):=f_\fp(n+1)-f_\fp(n)=g_\fp(n)$ for $n \geq 0$
and $f_{\fp}(0)=\dim_{K_\fp}(K_\fp \otimes P(0))$; this $f_\fp$ has integral coefficients and
degree at most $d$.

For the semi-continuity statement, note that $f_\fp \geq f$ is equivalent
to either $g_\fp=\Delta f_\fp > \Delta f$, or else $g_\fp \geq \Delta
f$ and moreover $f_\fp(0) \geq f(0)$. Both possibilities are closed
conditions on $\fp$. Similarly, $f_\fp > f$ is equivalent to either
$g_\fp > \Delta f$ or else $g_\fp \geq \Delta f$ and
$f_{\fp}(0)> f(0)$, which, again, are closed conditions.
\end{proof}

\subsection{Local freeness}

We now generalise Lemma~\ref{lm:GenericFreeness} to polynomial functors.

\begin{prop} \label{prop:GenericFreeness}
Let $R$ be a domain, $P\colon\fgfModR \to \fgModR$ a polynomial functor
and $S$ a subobject of $P$ in the larger category of polynomial
functors $\fgfModR \to \ModR$. Then there exists a nonzero $r \in R$
such that $R[1/r] \otimes S(U)$ and $R[1/r] \otimes P(U)$ are finitely
generated free $R[1/r]$-modules for all $U \in \fgfModR$, and the latter
is a direct sum of the former and another free $R[1/r]$-module.
\end{prop}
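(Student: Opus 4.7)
My plan is to induct on the degree $d$ of $P$, using the shift construction of \S\ref{ssec:Shifting} to reduce both to lower degree and to a single module pair.

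In the base case $d=0$ the functor $P$ is constant with value $M:=P(0)\in\fgModR$ and $S$ is constant with value a submodule $N:=S(0)\subseteq M$; Lemma~\ref{lm:GenericFreeness} applied to $N\subseteq M$ produces a nonzero $r\in R$ that works uniformly in $U$.

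For the inductive step, assume the proposition in degrees $<d$ and let $P$ have degree $\leq d$. By Lemma~\ref{lm:Shift} the polynomial functor $Q:=\Sh_R(P)/P\colon\fgfModR\to\fgModR$ has degree strictly less than $d$. Setting $S':=\Sh_R(S)/S$, I would first verify that $S'\hookrightarrow Q$ is a subobject in the (Abelian) category of polynomial functors $\fgfModR\to\ModR$; this follows from functoriality of $\Sh_R$ applied to the inclusion $S\hookrightarrow P$ together with naturality of the section (from $V\to R\oplus V$) and projection (from $R\oplus V\to V$). The inductive hypothesis applied to $(Q,S')$ produces a nonzero $r_1\in R$, the base case applied to the module pair $(P(0),S(0))$ produces $r_2$, and I set $r:=r_1r_2$.

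For $U\in\fgfModR$ of rank $n$, iterating the shift decomposition $P(R^k)\cong P(R^{k-1})\oplus Q(R^{k-1})$ (and analogously for $S$) gives
\[
P(U)\cong P(0)\oplus\bigoplus_{i=0}^{n-1}Q(R^i),\qquad S(U)\cong S(0)\oplus\bigoplus_{i=0}^{n-1}S'(R^i),
\]
with the inclusion $S(U)\hookrightarrow P(U)$ respecting the decompositions summand by summand, again by naturality of the splittings. After inverting $r$, each $P$-summand is a finitely generated free $R[1/r]$-module and contains its $S$-summand as a free direct summand with free complement, so the direct sum inherits these properties.

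The main obstacle, or rather the key insight, is the iterated shift decomposition: it expresses $P(U)$ for arbitrary $U$ as a direct sum of a single fixed module $P(0)$ together with values $Q(R^i)$ of a lower-degree polynomial functor, so a single localization suffices once the induction is set up. What needs care is the compatibility of these decompositions with the subobject $S\hookrightarrow P$; after this is pinned down via naturality of the shift, the rest of the argument is bookkeeping on direct summands.
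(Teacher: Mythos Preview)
Your proof is correct and follows essentially the same approach as the paper's: induction on the degree of $P$, with the base case handled by Lemma~\ref{lm:GenericFreeness} and the inductive step via the iterated shift decomposition $P(R^n)\cong P(0)\oplus\bigoplus_{i=0}^{n-1}Q(R^i)$ (and likewise for $S$), combining an $r_1$ from the induction hypothesis for $(Q,\Sh_R(S)/S)$ with an $r_2$ from Lemma~\ref{lm:GenericFreeness} for $(P(0),S(0))$. Your explicit remark that the inclusion $S(U)\hookrightarrow P(U)$ respects the summand decomposition is a point the paper leaves implicit.
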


Note that we do not claim that the complement is itself the evaluation
of another subobject; i.e., $S_{R[1/r]}$ needs not be a summand of
$P_{R[1/r]}$ in the category of polynomial functors over $R[1/r]$.

\begin{proof}
Again, we proceed by induction on the degree of $P$. If $P$
has degree $0$, then so does $S$ and then the statement is just
Lemma~\ref{lm:GenericFreeness}. Suppose that the degree of $P$ is $d>0$
and that the proposition holds for all polynomial functors of degree
less than~$d$.

By Lemma~\ref{lm:Shift}, for each $n$ we have
\[ 
P(R^{n+1})=P(R^n) \oplus Q(R^n) 
\]
where $Q=\Sh_R(P)/ P$ has degree $<d$. 
Similarly, we have
\[
S(R^{n+1})=S(R^n) \oplus N(R^n) 
\]
where $N=\Sh_R(S) / S \subseteq Q$. It follows that 
\begin{align*} 
P(R^n)&=P(0) \oplus Q(0) \oplus Q(R^1) \oplus \cdots \oplus Q(R^{n-1}) \text{ and}\\
S(R^n)&=S(0) \oplus N(0)\oplus N(R^1) \oplus \cdots \oplus N(R^{n-1}).
\end{align*}
Now by Lemma~\ref{lm:GenericFreeness} there exists a nonzero $r_0$ such
that $R[1/r_0] \otimes P(0)$ is the direct sum of a free $R[1/r_0]$-module
and $R[1/r_0] \otimes S(0)$, which is also free. And by the induction
hypothesis there exists a nonzero $r_1  \in R$ such that for each $m$,
$R[1/r_1] \otimes Q(R^m)$ is a direct sum of two free
$R[1/r_1]$-modules, one of which is $R[1/r_1] \otimes N(R^m)$. Then $r:=r_0
\cdot r_1$ does the trick for the pair $P,S$.
\end{proof}

\subsection{The Friedlander-Suslin lemma}
\label{ssec:FriedlanderSuslin}

The Friedlander-Suslin lemma relates polynomial functors of bounded
degree to representations of certain associative algebras called {\em
Schur Algebras}. To introduce these, let $U \in \fgfModR$ and let $d\geq1$
be an integer. The bilinear polynomial law 
\[
- \circ -\colon  \End(U) \times \End(U) \to \End(U) 
\]
given by composition yields an algebra homomorphism  
\[ 
R[\End(U)] \to R[\End(U) \times \End(U)] \cong R[\End(U)] \otimes R[\End(U)] 
\]
which maps the part $R[\End(U)]_{\leq d}$ of degree $\leq d$ into 
\[
 \sum_{\substack{a,b \geq 0\\ a+b \leq d}} R[\End(U)]_a \otimes
R[\End(U)]_b \subseteq R[\End(U)]_{\leq d} \otimes
R[\End(U)]_{\leq d}. 
\]
Taking the dual $R$-modules, we obtain a map 
\[ 
R[\End(U)]_{\leq d}^* \otimes R[\End(U)]_{\leq d}^* \to 
(R[\End(U)]_{\leq d} \otimes R[\End(U)]_{\leq d})^* \to
R[\End(U)]_{\leq d}^*.
\]
We set $S_{\leq d}(U):=R[\End(U)]_{\leq d}^*$.  The first map is, in
fact, an isomorphism due to the fact that $S_{\leq d}(U)$ is finitely
generated and free as an
$R$-module. Indeed, if $U$ is free with basis $u_1,\ldots,u_n$, then
$\End(U)$ is free with basis $(E_{ij})_{i,j=1}^n$, where $E_{ij}u_k
=\delta_{jk} u_i$, and $R[\End(U)]_{\leq d}$ is free with basis the
monomials $x^{\alpha}$ of degree $\leq d$ in the coordinates $x_{ij}$
dual to the~$E_{ij}$, and hence $R[\End(U)]_{\leq d}^*$ is free with the
dual basis $(s_\alpha)_\alpha$, where $\alpha$ runs over all multi-indices
in $\ZZ_{\geq 0}^{n\times n}$ such that $|\alpha|:=\sum_{i,j} \alpha_{i,j}
\leq d$. We let $-*-\colon S_{\leq d}(U)\times S_{\leq d}(U) \to S_{\leq d}(U)$
be the bilinear map associated to the map above.

\begin{de}
The $R$-module $S_{\leq d}(U)$ with the bilinear map $-*-$ is called the {\em
Schur algebra of degree $\leq d$ on $U$}, and (given a basis of $U$),
the basis $(s_{\alpha})_\alpha$ is called its {\em distinguished basis}.
\end{de}

The Schur algebra is associative (but not commutative unless $n=1$); this
follows from the associativity of composition in $\End(U)$. Explicitly,
the coefficient of $s_{\gamma}$ in the product $s_\alpha * s_{\beta}$
is computed as follows: First, expand the composition $(\sum_{ij} x_{ij}
E_{ij}) \circ (\sum_{kl} y_{kl} E_{kl})$, where the $x_{ij}$ and $y_{kl}$
are variables, as $\sum_{i,l} (\sum_j x_{ij} y_{jl}) E_{il}=:\sum_{il}
z_{il} E_{il}$. Then expand $z^\gamma$ as a polynomial in the $x_{ij}$ and
the $y_{kl}$, and take the coefficient of the monomial $x^\alpha y^\beta$.

The map $\End(U) \to S_{\leq d}(U)$ that sends $\phi$ to the
$R$-linear evaluation map 
\[ 
R[\End(U)]_{\leq d} \to R,\quad f \mapsto f_R(\phi) 
\] 
is an injective homomorphism of associative $R$-algebras, so
$S_{\leq d}(U)$-modules $M$ are, in particular, representations of the
$R$-algebra $\End(U)$. In fact, they are precisely the {\em polynomial}
$\End(U)$-representations of degree $\leq d$, i.e., those
for which the map
$\End(U) \to \End(M)$ is not just a homomorphism of
(noncommutative) $R$-algebras but also
a polynomial law making certain diagrams commute. Since we will not
need this interpretation, we skip the details. 

Now suppose that $P$ is a polynomial functor $\fgfModR \to \ModR$
of degree $\leq d$. Then $P(U)$ naturally carries the structure of an
$S_{\leq d}(U)$-module as follows: the polynomial law
\[
P_{U,U}\colon\End(U) \to \End(P(U)) 
\]
has degree $\leq d$ and therefore we have
\[
P_{U,U,R[x_{11},x_{12},\ldots,x_{nn}]} \left(\sum_{i,j=1}^nx_{ij}\otimes E_{ij}\right) 
= \sum_{|\alpha| \leq d} x^\alpha\otimes \phi_\alpha 
\]
for certain endomorphisms $\phi_\alpha \in \End(P(U))$. Now the basis element $s_\alpha$
of $S_{\leq d}(U)$ acts on $P(U)$ via $\phi_\alpha$; it can
be shown that this construction is independent of the choice
of basis of $U$.

\begin{thm}[Friedlander-Suslin lemma, {\cite[Th\'eor\`eme 7.2]{touze}}
and {\cite[Theorem 3.2]{friedlander-suslin}}]
\label{thm:FriedlanderSuslin}
Let $U \in \fgfModR$ have rank $\geq d$. Then the association
$P \mapsto P(U)$ is an equivalence of Abelian categories from the full
subcategory of $\PFR$ consisting of polynomial functors $\fgfModR \to
\ModR$ of degree~$\leq d$ to the category of $S_{\leq d}(U)$-modules.
\end{thm}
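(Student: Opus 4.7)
The plan is to construct a quasi-inverse $G$ to the evaluation functor $F\colon P\mapsto P(U)$, so that $F\circ G\cong\id$ and $G\circ F\cong\id$ naturally; since both functors are manifestly $R$-linear (hence additive) and both source and target are abelian, this suffices for an equivalence of abelian categories.

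For the construction, I would use that the bilinear composition maps
\[
\End(V)\times\Hom(U,V)\to\Hom(U,V)\quad\text{and}\quad\Hom(U,V)\times\End(U)\to\Hom(U,V)
\]
each have bidegree $(1,1)$, so their pull-backs send $R[\Hom(U,V)]_{\leq d}$ into $R[\End(V)]_{\leq d}\otimes R[\Hom(U,V)]_{\leq d}$ and into $R[\Hom(U,V)]_{\leq d}\otimes R[\End(U)]_{\leq d}$, respectively. Dualising---harmless since all modules involved are finitely generated and free in each degree---equips $R[\Hom(U,V)]_{\leq d}^{\,*}$ with the structure of an $(S_{\leq d}(V),S_{\leq d}(U))$-bimodule. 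Given an $S_{\leq d}(U)$-module $M$, I set
\[
G(M)(V) := R[\Hom(U,V)]_{\leq d}^{\,*}\otimes_{S_{\leq d}(U)} M,
\]
with the action on $\phi\in\Hom(V,W)$ given by pre-composition on the Hom-space factor; the bilinearity of $\Hom(V,W)\times\Hom(U,V)\to\Hom(U,W)$ makes $\Hom(V,W)\to\Hom(G(M)(V),G(M)(W))$ a polynomial law of degree $\leq d$, so $G(M)$ is a polynomial functor of the desired type. Specialising to $V=U$ gives $G(M)(U)=S_{\leq d}(U)\otimes_{S_{\leq d}(U)} M\cong M$, which (after checking compatibility with the $S_{\leq d}(U)$-action induced by $G(M)_{U,U}$) yields $F\circ G\cong\id$.

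For the other direction, the natural comparison $\alpha_V\colon G(P(U))(V)\to P(V)$ is built from the polynomial law $P_{U,V}\colon\Hom(U,V)\to\Hom(P(U),P(V))$: by Lemma~\ref{lm:Injection} applied to the free module $\Hom(U,V)$, its degree-$\leq d$ part corresponds to an element of $R[\Hom(U,V)]_{\leq d}\otimes\Hom(P(U),P(V))$, equivalently to an $R$-linear map $R[\Hom(U,V)]_{\leq d}^{\,*}\otimes P(U)\to P(V)$; and the commutativity of the composition diagram in the definition of a polynomial functor (applied to $\Hom(U,V)\times\End(U)\to\Hom(U,V)$) forces this map to descend over $S_{\leq d}(U)$, producing $\alpha_V$, which is natural in both $P$ and $V$.

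The main obstacle is showing that $\alpha_V$ is an isomorphism for every $V$, and this is precisely where the hypothesis $\rk U\geq d$ enters. For $\rk V\leq\rk U$ one splits $U\cong V\oplus V'$, lets $e\in\End(U)$ be the resulting projector onto $V$, and observes that the element of $S_{\leq d}(U)$ corresponding to $e$ is idempotent and cuts out a canonical copy of $P(V)$ inside $P(U)=G(P(U))(U)$ which matches $G(P(U))(V)$ under $\alpha_V$. For $\rk V>\rk U$ the key observation is that any polynomial functor of degree $\leq d$ is determined by its restriction to free modules of rank $\leq d$ via the polynomial law structure, so the iso propagates from the previous case. Making this propagation rigorous over a general commutative ring, where one has neither semisimplicity nor Schur--Weyl duality to fall back on, is the technical heart of Friedlander--Suslin's original argument and its base-change-friendly reformulation by Touz\'e.
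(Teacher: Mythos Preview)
The paper does not supply a proof of this theorem: it is stated with attribution to \cite[Th\'eor\`eme~7.2]{touze} and \cite[Theorem~3.2]{friedlander-suslin} and then used as a black box, so there is no ``paper's own proof'' to compare against. Your outline is in fact the standard route taken in those references: one builds the quasi-inverse via the $(S_{\leq d}(V),S_{\leq d}(U))$-bimodules $R[\Hom(U,V)]_{\leq d}^{*}$, checks $F\circ G\cong\id$ by specialising to $V=U$, and then constructs the comparison map $\alpha_V$ from $P_{U,V}$ exactly as you describe.

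Your proposal is honest about where the real work lies. The one place to be careful is your handling of $\rk V>\rk U$: the assertion that ``any polynomial functor of degree $\leq d$ is determined by its restriction to free modules of rank $\leq d$'' is essentially equivalent to the full faithfulness and essential surjectivity you are trying to establish, so invoking it as a ``key observation'' is close to circular. In Friedlander--Suslin (over a field) and in Touz\'e's base-ring version, this step is handled by an explicit combinatorial analysis of the bimodules $R[\Hom(U,V)]_{\leq d}^{*}$---showing directly, via multi-indices and the distinguished bases, that $\alpha_V$ is an isomorphism for all $V$, not by appealing to a prior determination principle. You correctly flag this as the technical heart and defer to the cited sources, which is exactly what the paper itself does.
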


To conclude this section, we observe that Schur algebras behave well
under base change: if $A$ is an $R$-algebra, then we have a
commuting diagram (up to natural isomorphisms):
\[ 
\xymatrix{
(\PFR)_{\leq d} \ar[r] \ar[d]_{P \mapsto P_A}& \{S_{\leq d}(U)\text{-modules}\}
\ar[d]^{M \mapsto A \otimes M}\\
(\PFA)_{\leq d} \ar[r] & \{(A \otimes S_{\leq d}(U))\text{-modules}\} 
}
\] 
where the lower horizontal map is evaluation at $A \otimes U$ and 
the $A$-algebra $A \otimes S_{\leq d}(U)$ is canonically
isomorphic to the Schur algebra $S_{\leq d}(A \otimes U)$ on the free
$A$-module~$A \otimes U$.

\subsection{Irreducibility in an open subset of $\Spec(R)$}
\label{ssec:IrredOpen}

Let $R$ be a domain and let $P\colon\fgfModR\to\fgModR$ be a polynomial functor. As before, for each prime $\fp\in\Spec(R)$ we set $K_\fp:=\Frac(R/\fp)$; in particular, $K:=K_{(0)}$ is the fraction field of $R$. Recall that the
base change functor yields a polynomial functor $P_{K_\fp}$ over the
field $K_\fp$ for each $\fp \in \Spec(R)$, and also a polynomial functor
$P_{\overline{K_{\fp}}}$ over the algebraic closure $\overline{K_{\fp}}$
of $K_{\fp}$. The goal of this section is to transfer certain properties
of $P_{K}$ to $P_{K_\fp}$ for $\fp$ in an open dense subset of
$\Spec(R)$.

\begin{prop} \label{prop:Irred}
Let $\overline{Q}$ be an irreducible subobject of $P_{K}$ in the
Abelian category of polynomial functors over $K$ and assume that
$\overline{Q}_{\overline{K}}$ is still irreducible. Then there exists
a subobject $Q$ of $P$ in the category of polynomial functors
$\fgfModR \to \ModR$ such that $Q_K=\overline{Q}$ and $Q_{\overline{K_\fp}}$ is an irreducible subobject
of $P_{\overline{K_\fp}}$ in the Abelian category of polynomial functors
over $\overline{K_\fp}$ for all primes $\fp$ in a dense open subset $\Spec(R[1/r]) \subseteq
\Spec(R)$.
\end{prop}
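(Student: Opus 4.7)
The plan is to build $Q$ from $\overline{Q}$ using the Friedlander--Suslin equivalence of Theorem~\ref{thm:FriedlanderSuslin}, then apply generic freeness to produce a first open dense subset of $\Spec(R)$ where $Q$ is well-behaved, and finally use a density argument to shrink to an open dense subset on which $Q_{\overline{K_\fp}}$ is irreducible.

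First, I fix $U \in \fgfModR$ of rank at least the degree $d$ of $P$. By Theorem~\ref{thm:FriedlanderSuslin}, subobjects of $P$ correspond to $S_{\leq d}(U)$-submodules of $P(U)$. I would define $Q(U) \subseteq P(U)$ to be the preimage of $\overline{Q}(U) \subseteq K \otimes P(U)$ under the canonical map; this submodule is automatically stable under the $S_{\leq d}(U)$-action and therefore corresponds to a subobject $Q \subseteq P$. A short check (clearing denominators) shows $K \otimes Q(U) = \overline{Q}(U)$, so Friedlander--Suslin applied over $K$ gives $Q_K = \overline{Q}$. Next, I would apply Proposition~\ref{prop:GenericFreeness} to the inclusion $Q \subseteq P$ to find a nonzero $r_1 \in R$ such that $R[1/r_1] \otimes Q(V)$ and $R[1/r_1] \otimes P(V)$ are finitely generated free for every $V \in \fgfModR$, with the former a direct summand of the latter; consequently, for every $\fp \in \Spec(R[1/r_1])$, base change preserves the short exact sequence $0 \to Q \to P \to P/Q \to 0$, so $Q_{\overline{K_\fp}}$ is genuinely a subobject of $P_{\overline{K_\fp}}$.

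For the irreducibility, I invoke Theorem~\ref{thm:FriedlanderSuslin} over $\overline{K_\fp}$: it suffices that $\overline{K_\fp} \otimes Q(U)$ be irreducible as a module over the Schur algebra $S_{\leq d}(U)_{\overline{K_\fp}}$. Since $\overline{K_\fp}$ is algebraically closed and the module is finite-dimensional, by Burnside's theorem this is equivalent to surjectivity of the structure map
\[
\rho_{\overline{K_\fp}}\colon S_{\leq d}(U)_{\overline{K_\fp}} \to \End_{\overline{K_\fp}}\bigl(\overline{K_\fp} \otimes Q(U)\bigr).
\]
Irreducibility of $\overline{Q}_{\overline{K}}$ gives the same surjectivity over $\overline{K}$, and faithful flatness of $\overline{K}$ over $K$ then gives surjectivity of $\rho_K$. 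After inverting $r_1$, both the source and target of $\rho$ are finitely generated free $R[1/r_1]$-modules, so its cokernel is a finitely generated torsion module and is annihilated by some nonzero $r_2 \in R$. Taking $r := r_1 r_2$, the map $\rho$ is surjective over $R[1/r]$, and surjectivity is preserved under base change, yielding irreducibility of $Q_{\overline{K_\fp}}$ for all $\fp \in \Spec(R[1/r])$.

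The main obstacle is the first step: I must handle torsion in $P(U)$ carefully when taking the preimage, and verify that the Friedlander--Suslin equivalence commutes with base change in the precise sense needed to identify $Q_K$ with $\overline{Q}$ and $Q_{\overline{K_\fp}}(U)$ with $\overline{K_\fp} \otimes Q(U)$ as Schur-algebra modules. This compatibility is exactly the commuting diagram at the end of \S\ref{ssec:FriedlanderSuslin}, and once it is in place, the argument reduces to the generic-freeness and cokernel-annihilation bookkeeping described above.
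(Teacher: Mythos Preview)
Your proof is correct and takes a genuinely different route to the generic-irreducibility step. Both arguments begin the same way: reduce via Friedlander--Suslin to a single $S_{\leq d}(U)$-module, take $Q(U)$ to be the preimage of $\overline{Q}(U)$ in $P(U)$, and localise to make it free (the paper uses only Lemma~\ref{lm:GenericFreeness} for the single module $P(U)$ and then lets the equivalence over $\overline{K_\fp}$ produce the subobject; your use of Proposition~\ref{prop:GenericFreeness} for all $V$ is overkill but harmless). The divergence is in showing that $\overline{K_\fp} \otimes Q(U)$ remains irreducible for generic $\fp$. The paper does this via a separate Lemma~\ref{lm:Irred2} with a Grassmannian argument: for each $0 < k < n$ it cuts out, inside $\Gr_R(k,n)$, a constructible locus whose $\overline{K_\fp}$-points are exactly the $k$-dimensional submodules of $\overline{K_\fp} \otimes Q(U)$, observes that this locus misses the generic point of $\Spec(R)$ by hypothesis, and concludes that its image in $\Spec(R)$ avoids a dense open. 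You instead invoke Burnside's theorem to convert irreducibility over an algebraically closed field into surjectivity of the structure map $\rho$, descend surjectivity from $\overline{K}$ to $K$ by faithful flatness, and spread it out by annihilating the finitely generated torsion cokernel of $\rho_{R[1/r_1]}$. Your argument is shorter and purely module-theoretic, bypassing the geometric machinery; the paper's Grassmannian approach is the standard constructibility-of-loci technique and would adapt verbatim to other fibrewise properties of modules, but for this particular statement your shortcut is cleaner.
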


\begin{re}
Note that we don't require that $Q$ is a functor into $\fgModR$; we may
not be able to guarantee this if $R$ is not a Noetherian ring.
\end{re}

In order to prove this proposition, we use the following lemma.

\begin{lm} \label{lm:Irred2}
Let $A$ be a (not necessarily commutative) associative $R$-algebra and $N$ an $A$-module that is, as an $R$-module, finitely generated and free. Suppose that $\overline{K} \otimes N$ is an irreducible
$(\overline{K} \otimes A)$-module. Then there exists a dense open
subset $\Spec(R[1/r]) \subseteq \Spec(R)$ such that $\overline{K_\fp} \otimes N$ is an
irreducible $(\overline{K_\fp} \otimes A)$-module for all $\fp\in\Spec(R[1/r])$.
\end{lm}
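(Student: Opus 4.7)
The plan is to reformulate irreducibility via a Burnside/Jacobson density argument and then transport the resulting surjectivity statement through a suitable localisation. Let $n$ denote the $R$-rank of $N$ and identify $\End_R(N)\cong M_n(R)$ via any $R$-basis of $N$, so that $\End_{K_\fp}(K_\fp\otimes N)\cong M_n(K_\fp)$ for every $\fp\in\Spec(R)$. Let $\rho\colon A\to\End_R(N)=M_n(R)$ denote the structure homomorphism, and write $B\subseteq M_n(R)$ for its image.

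First I would use Burnside's theorem: since $\overline{K}\otimes N$ is a finite-dimensional irreducible module over $\overline{K}\otimes A$, the induced map $\overline{K}\otimes A\to\End_{\overline{K}}(\overline{K}\otimes N)=M_n(\overline{K})$ is surjective. In particular the $\overline{K}$-span of $\rho(A)$ is all of $M_n(\overline{K})$, so $K\otimes B=M_n(K)$, i.e.\ the $K$-vector space spanned by $\rho(A)$ has full dimension $n^2$.

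Next I would pick $b_1,\ldots,b_{n^2}\in A$ whose images $\rho(b_1),\ldots,\rho(b_{n^2})$ form a $K$-basis of $M_n(K)$. Expanding these images in the standard $R$-basis $(E_{ij})_{i,j=1}^n$ of $M_n(R)$ yields an $n^2\times n^2$ matrix $C$ with entries in $R$ whose determinant is a nonzero element $r\in R$ (the nonzeroness holding in $K$, and hence already in $R$). Now for any $\fp\in\Spec(R[1/r])$, the image of $r$ in $K_\fp$ is nonzero, so the matrix $C$ remains invertible over $K_\fp$. This means $\rho(b_1),\ldots,\rho(b_{n^2})$ are still $K_\fp$-linearly independent in $M_n(K_\fp)$, hence form a $K_\fp$-basis. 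Consequently the composition
\[
\overline{K_\fp}\otimes A\;\longrightarrow\;\End_{\overline{K_\fp}}(\overline{K_\fp}\otimes N)=M_n(\overline{K_\fp})
\]
is surjective. By the easy converse to Burnside's theorem (for any nonzero $v\in\overline{K_\fp}\otimes N$, the orbit $M_n(\overline{K_\fp})\cdot v$ equals $\overline{K_\fp}\otimes N$), we conclude that $\overline{K_\fp}\otimes N$ is an irreducible $(\overline{K_\fp}\otimes A)$-module, as required.

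The only mild subtlety is the first step, where one must genuinely invoke the hypothesis that $\overline{K}\otimes N$ is irreducible over the algebraically closed field $\overline{K}$ in order to apply Burnside and deduce that the image $\rho(A)$ has full $K$-rank $n^2$ in $M_n(R)$; without algebraic closure one only gets surjection onto a central simple algebra Morita-equivalent to a division algebra, which is not sharp enough for the determinant argument. Once that is in hand, the rest is straightforward generic-freeness: the single localisation at $r=\det(C)$ simultaneously guarantees density (since $r\neq 0$ in the domain $R$) and preserves the full-rank condition fibrewise at every $\fp\in\Spec(R[1/r])$.
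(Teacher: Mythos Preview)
Your proof is correct, but it takes a genuinely different route from the paper's. The paper argues geometrically: for each $k=1,\ldots,n-1$ it builds a closed subscheme $C_J$ of each affine chart of the Grassmannian $\Gr_R(k,n)$ cut out by the $(k+1)\times(k+1)$ minors of the matrices $\begin{bmatrix} y_{a,m}\\ X_J\end{bmatrix}$, so that the $\overline{K_\fp}$-points of $Z_k=\bigcup_J C_J$ parameterise $k$-dimensional $(\overline{K_\fp}\otimes A)$-submodules of $\overline{K_\fp}\otimes N$; irreducibility over $\overline{K}$ then says each $Z_k\to\Spec(R)$ misses the generic point, and one takes $r$ in the vanishing ideal of the (constructible) image of $\bigcup_k Z_k$.

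Your argument via Burnside's theorem is shorter and more explicit: you produce a single concrete $r=\det(C)\in R$ rather than appealing to non-dominance of a family of morphisms. The trade-off is that your method leans essentially on the algebraic closedness of $\overline{K}$ to get surjectivity onto the full matrix algebra, whereas the Grassmannian approach is agnostic about the structure of the endomorphism algebra and would adapt more readily to related statements (e.g.\ controlling the locus where a given filtration persists). For the lemma as stated, your approach is the cleaner one.
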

\begin{proof}
Let $v_1,\ldots,v_n$ be an $R$-basis of $N$. For each $j \in [n]$ and each
$a \in A$ let $c_{a,i,j} \in R$ be the {\em structure constants} determined by
\[
a v_j = \sum_i c_{a,i,j} v_i. 
\]
For each $k=1,\ldots,n-1$, we will construct a constructible subset
$Z_k$ of the Grassmannian $\Gr_{R}(k,n)$ over $R$ whose set of
$\overline{K_\fp}$-points, for $\fp \in \Spec(R)$, is 
the set of $k$-dimensional $(\overline{K_\fp} \otimes A)$-submodules of $\overline{K_{\fp}} \otimes N$. The construction is as follows: for each $J \subseteq [n]$ of size $k$ consider the $k \times n$ matrix
$X_J$ whose entries on the columns labelled by $J$ are a $k \times k$ identity matrix over $R$ and whose other entries are variables
$x_{ij},i \in [k], j \in [n] \setminus J$. Recall that $\Gr_{R}(k,n)$
has an open cover of affine spaces $\AA_{R,J}^{k \times (n-k)}$ over $R$
on which the coordinates are precisely these $x_{ij}$ with
$j \not \in J$. For $j \in J$
we write $x_{ij} \in \{0,1\}$ for the corresponding entry of
$X_J$. Note that, for each $m=1,\ldots,k$ and each $a \in A$, we have
\[
(1 \otimes a)\left(\sum_{j=1}^n x_{mj} \otimes v_j\right)= \sum_{i=1}^n \sum_{j=1}^n c_{a,i,j} x_{mj} \otimes v_i\in R\left[x_{ij}\,\middle|\, i\in[k],j\in[n]\setminus J\right]\otimes N 
\]
and we define the row vector of coefficients
\[ y_{a,m}:=\left(\sum_{j=1}^n c_{a,i,j} x_{mj}\right)_{i=1}^n \]
with entries in the coordinate ring $R[x_{ij}\mid i\in[k],j\in[n]\setminus J]$ of $\AA_{R,J}^{k \times (n-k)}$. 

Let $C_J$ be the closed subset of $\AA_{R,J}^{k \times (n-k)}$ defined by the vanishing of all
$(k+1) \times (k+1)$-subdeterminants of the matrices
\[
\begin{bmatrix} y_{a,m} \\ X_J \end{bmatrix}
\]
for all choices of $a \in A$ and $m=1,\ldots,k$.  For each prime $\fp \in
\Spec(R)$, the subset $C_J(\overline{K_\fp}) \subseteq \Gr_R(k,n)(\overline{K_\fp})$
parameterises the $k$-dimensional $(\overline{K_{\fp}} \otimes A)$-submodules of $\overline{K_{\fp}} \otimes N\cong\overline{K_{\fp}}^{[n]}$ that map surjectively
to $\overline{K_\fp}^J$.  In particular, by the assumption that $\overline{K} \otimes
N$ is still irreducible, the image of $C_J$ in $\Spec(R)$ does not
contain the prime~$0$, for any $k$ and any $k$-set $J \subseteq [n]$. In
other words, the morphism $C_J \to \Spec(R)$ is not dominant. Set 
$Z_k:=\bigcup_{J \subseteq [n], |J|=k} \overline{C_J}$, a finite union
of locally closed subsets of the Grassmannian.  Then $Z_k \to \Spec(R)$
is still not dominant, and neither is $\left(\bigcup_{k=1}^{n-1}Z_k\right)
\to \Spec(R)$. Hence there exists a nonzero $r \in R$ that lies in the
vanishing ideal of the image; the open dense subset $\Spec(R[1/r]) \subseteq
\Spec R$ then has the desired property. 
\end{proof}

\begin{proof}[Proof of Proposition~\ref{prop:Irred}]
By the Friedlander-Suslin Lemma (Theorem~\ref{thm:FriedlanderSuslin})
and the fact that the Schur algebra behaves well under base change, it
suffices to prove the corresponding statement for all $d \in \ZZ_{\geq
0}$, $U:=R^d$, and all $S_{\leq d}(U)$-modules that are finitely generated
over $R$ (which, of course, is equivalent to being finitely generated
as an $S_{\leq d}(U)$-module).

So let $M$ be a finitely generated $S_{\leq d}(U)$-module and let
$\overline{N}$ be an irreducible $(K \otimes S_{\leq d}(U))$-submodule
of $K \otimes M$ that remains irreducible when tensoring with
$\overline{K}$. Define
\[ N:=\{v \in M \mid 1 \otimes v \in \overline{N} \}. \]
A straightforward computation shows that $N$ is a (not necessarily
finitely generated) $S_{\leq d}(U)$-submodule of $M$.

By Lemma~\ref{lm:GenericFreeness} there exist a nonzero $r \in R$ and
elements $v_1,\ldots,v_n \in N$ such that $R[1/r] \otimes N$ is a free
$R[1/r]$-module with basis $1 \otimes v_1,\ldots,1 \otimes
v_n$. Then Lemma~\ref{lm:Irred2} applied with
$R$ equal to $R[1/r]$ and $A$ equal to $R[1/r] \otimes
S_{\leq d}(U)$ shows that $\overline{K_{\fp}} \otimes N$ is 
an irreducible $(\overline{K_{\fp}} \otimes S_{\leq d}(U))$-submodule of
$\overline{K_{\fp}} \otimes M$ for all $\fp$ in some nonempty open
subset $\Spec R[1/(rs)] \subseteq \Spec(R[1/r]) \subseteq \Spec(R)$. 
\end{proof}

\subsection{Closed subsets of polynomial functors}
\label{ssec:ClosedSubsets}

Closed subsets of a polynomial functors play the role of affine varieties
in finite-dimensional algebraic geometry. In this subsection, $P$ is a
fixed polynomial functor $\fgfModR \to \fgModR$ of finite degree.

For any $U,V \in \fgfModR$ we have a sequence of polynomial laws
\[ 
\xymatrix{
\Hom(U,V) \times P(U) \ar[rr]^-{P_{U,V} \times\, \id} && \Hom(P(U),P(V))
\times P(U) \ar[rr]^-{(\phi,p) \mapsto \phi(p)}
&& P(V), 
}
\] 
whose composition we denote by $\Phi_{U,V}$. We also let $\Pi_{U,V}\colon \Hom(U,V) \times P(U)\to P(U)$ be the linear polynomial law given by projection. Recall that $\Phi_{U,V}$ and $\Pi_{U,V}$ both yield continuous maps from $\AA_{\Hom(U,V) \times P(U)}\to\AA_{P(V)}$.

\begin{de} \label{de:ClosedSubsetPF}
We define $\AA_P$ to be $P$. A {\em subset} of $\AA_P$ is a rule $X$ that assigns to each $U
\in \fgfModR$ a subset $X(U)$ of $\AA_{P(U)}$ (see 
Definition~\ref{de:SubsetAM}) in such a manner
that 
\[
\Phi_{U,V}(\Pi_{U,V}^{-1}(X(U))) \subseteq X(V)
\]
for all $U,V \in \fgfModR$. The subset $X\subseteq\AA_P$ is {\em closed} if $X(U)$ is a closed subset of~$\AA_{P(U)}$ for all $U
\in \fgfModR$. The {\em closure} of $X$ is the closed subset $\overline{X}$ of $\AA_P$ assigning $\overline{X(U)}$ to $U$ for all $U\in \fgfModR$.
\end{de}

It is worth spelling out what this means. Let $U,V$ be finitely generated free $R$-modules, let $D$ be an $R$-domain
and let $\phi \in D \otimes \Hom(U,V)$. Then the condition
is that $P_{U,V,D}(\phi)\in D\otimes \Hom(P(U),P(V))$ maps $X(U)(D)\subseteq D\otimes P(U)$ into $X(V)(D)$.
In the particular case where $V=U$, this condition can be informally
thought of as the condition that $X(U)$ is preserved under the polynomial
action of $\End(U)$. Let $\alpha\colon Q\to P$ be a polynomial transformation and let $X$ be a subset of $Q$. Then $\alpha(X)=(U\mapsto\alpha_U(X(U)))$ is a subset of $P$.

\begin{de}
For $X\subseteq\AA_P$, we define the ideal $\cI_X$ of $X$ to be the rule assigning $\cI_{X(U)} \subseteq R[P(U)]$ to $U$ for all $U\in \fgfModR$. The rule $\cI_X$ is an ideal in the $R$-algebra over the category $\fgfModR$ defined by $U \mapsto R[P(U)]$, i.e., for all $\phi\in\Hom(U,V)$ we have $\cI_X(V)\circ P_{U,V,R}(\phi)\subseteq \cI_X(U)$.
\end{de}

\begin{de}[Base change]\label{de: base change for X}
If $X\subseteq\AA_P$ is a closed subset and $B$ is an $R$-algebra, then we
obtain a closed subset $X_B$ of $\AA_{P_B}$ by letting, for a $U \in \fgfModB$,
$X_B(U)$ be the closed subset $X(U_R)_B$ of $\AA_{P_B(U)}=\AA_{B \otimes
P(U_R)}$, where $U_R$ is the free $R$-module such that $U
\cong B \otimes U_R$ from the definition of $P_B$.
\end{de}

We will use the following lemmas very frequently in our proof
of Theorem~\ref{thm:Main}.

\begin{lm} \label{lm:Branching}
Let $R$ be a ring with Noetherian spectrum and $r$ an element of $R$. Let
$\fp_1,\ldots,\fp_k$ be the minimal primes of $R/(r)$. Then
two closed subsets $X,Y\subseteq\AA_P$ are equal if and only if
$X_{R[1/r]}=Y_{R[1/r]}$ and $X_{R/\fp_i}=Y_{R/\fp_i}$ for
all $i=1,\ldots,k$.
\end{lm}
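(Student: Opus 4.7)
The plan is to reduce Lemma~\ref{lm:Branching} directly to its module-level analogue, Lemma~\ref{lm:Branching1}, essentially pointwise in $U \in \fgfModR$. The forward direction is immediate: if $X = Y$ then $X_B = Y_B$ for every $R$-algebra $B$, straight from Definition~\ref{de: base change for X}.

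For the converse, suppose $X_{R[1/r]} = Y_{R[1/r]}$ and $X_{R/\fp_i} = Y_{R/\fp_i}$ for all $i = 1, \ldots, k$. Unwinding definitions, $X = Y$ as closed subsets of $\AA_P$ amounts to $X(U) = Y(U)$ as closed subsets of $\AA_{P(U)}$ for every $U \in \fgfModR$, so I would fix such a $U$ and verify this equality. The key point is the compatibility of the two base-change operations in play. For any $R$-algebra $B$, the construction of $P_B$ allows one to choose, for the free $B$-module $B \otimes U$, the underlying free $R$-module to be $U$ itself with $\psi$ the identity; different choices give isomorphic data, so this is harmless. With this choice, Definition~\ref{de: base change for X} unwinds to
\[
X_B(B \otimes U) \;=\; X(U)_B
\]
as closed subsets of $\AA_{B \otimes P(U)}$, and likewise for $Y$.

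Taking $B = R[1/r]$ and $B = R/\fp_i$ and invoking the hypotheses then gives $X(U)_{R[1/r]} = Y(U)_{R[1/r]}$ and $X(U)_{R/\fp_i} = Y(U)_{R/\fp_i}$ for every $i$. At this point $P(U)$ is a finitely generated $R$-module and $X(U), Y(U)$ are closed subsets of $\AA_{P(U)}$ satisfying precisely the hypotheses of Lemma~\ref{lm:Branching1}; that lemma then delivers $X(U) = Y(U)$. Since $U$ was arbitrary, $X = Y$.

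I do not anticipate any genuine obstacle here: the proof is really just a pointwise transport of Lemma~\ref{lm:Branching1} to each $P(U)$. The only mildly delicate bit is the bookkeeping that relates base change of closed subsets of $\AA_P$ to base change of closed subsets of $\AA_{P(U)}$ via the choices implicit in the construction of $P_B$, but this compatibility is built into Definition~\ref{de: base change for X}.
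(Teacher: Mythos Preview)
Your proof is correct and follows exactly the paper's approach: the paper's proof is the single sentence ``This follows from Lemma~\ref{lm:Branching1} with $X(U),Y(U)$ for every $U\in\fgfModR$.'' You have simply spelled out the base-change bookkeeping that the paper leaves implicit.
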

\begin{proof}
This follows from Lemma~\ref{lm:Branching1} with $X(U),Y(U)$ for every $U\in\fgfModR$.
\end{proof}

\begin{lm}\label{lm:integralextensions_polyfunctor}
Let $R\subseteq R'$ be a finite extension of domains and let $X,Y\subseteq\AA_P$ be closed subsets. Then $X=Y$ if and only if $X_{R'}=Y_{R'}$.
\end{lm}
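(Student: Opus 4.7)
The plan is to reduce this to its module analogue, Lemma~\ref{lm:integralextensions_module}, by evaluating $X$ and $Y$ object-by-object on $\fgfModR$. The forward implication is immediate from Definition~\ref{de: base change for X}, so essentially all of the work is in the converse direction.

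For the converse, I would assume $X_{R'} = Y_{R'}$ and fix an arbitrary $U \in \fgfModR$, aiming to prove $X(U) = Y(U)$ as closed subsets of $\AA_{P(U)}$. First I would set $U' := R' \otimes_R U \in \fgfModR'$; by the construction of $P_{R'}$, the free $R$-module $U'_R$ appearing there can be identified with $U$ itself. Under this identification, Definition~\ref{de: base change for X} says that the functorial base change $X_{R'}(U')$ is literally equal to the module-theoretic base change $X(U)_{R'}$, and similarly for $Y$. The hypothesis $X_{R'} = Y_{R'}$ therefore specialises to the equality $X(U)_{R'} = Y(U)_{R'}$ of closed subsets of $\AA_{R' \otimes P(U)}$. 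Since $P$ takes values in $\fgModR$, the module $P(U)$ is finitely generated, so Lemma~\ref{lm:integralextensions_module} applies to $X(U), Y(U) \subseteq \AA_{P(U)}$ and yields $X(U) = Y(U)$. As $U$ was arbitrary, this gives $X = Y$.

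The only point requiring care is the compatibility between the module base change of a closed subset in $\AA_M$ and the functorial base change in $\AA_P$; but this is essentially built into Definition~\ref{de: base change for X}, so no genuine obstacle arises. In particular, one does not need to re-examine the lying-over argument that powered Lemma~\ref{lm:integralextensions_module}: everything needed to pass from modules to polynomial functors is formal, and the finite-extension hypothesis on $R \subseteq R'$ enters only through the appeal to that lemma.
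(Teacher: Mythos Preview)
Your proposal is correct and follows precisely the paper's own approach: the paper's proof is the one-liner ``This follows from Lemma~\ref{lm:integralextensions_module} with $X(U),Y(U)$ for every $U\in\fgfModR$,'' and you have simply unpacked the base-change identifications that make that sentence work.
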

\begin{proof}
This follows from Lemma~\ref{lm:integralextensions_module} with $X(U),Y(U)$ for every $U\in\fgfModR$.
\end{proof}

\begin{lm}\label{lm:biggestclosed1}
Let $U\in\fgfModR$ and $g\in R[P(U)]$. Then 
\[
Y(V)(D)=\{p\in D\otimes P(V)\mid \forall\phi\in D\otimes\Hom(V,U): g_D(P_{V,U,D}(\phi)(p))=0\}
\]
for all $V\in\fgfModR$ and $R$-domains $D$ defines a closed subset
$Y\subseteq\AA_P$. The subset $Y$ is the biggest closed subset of
$\AA_P$ such that $g$ is in the ideal of $Y(U)$. 
\end{lm}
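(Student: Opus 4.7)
The plan is to realise $Y$ explicitly as $\cV(S_V)$ for a well-chosen set $S_V \subseteq R[P(V)]$. Fix an $R$-basis $e_1,\ldots,e_n$ of $\Hom(V,U)$ and consider the universal element $\phi_{\mathrm{univ}} := \sum_{i=1}^{n} x_i \otimes e_i$ of $R[x_1,\ldots,x_n] \otimes \Hom(V,U)$. For any $R$-algebra $A$ and any $p \in A \otimes P(V)$, applying $g$ to $P_{V,U}$ of the universal $\phi$ produces
\[
g_{A[x_1,\ldots,x_n]}\bigl(P_{V,U,A[x_1,\ldots,x_n]}(\phi_{\mathrm{univ}})(1 \otimes p)\bigr) \in A[x_1,\ldots,x_n],
\]
which expands uniquely as $\sum_\alpha h_{\alpha,A}(p)\,x^\alpha$. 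The coefficients are polynomial laws $h_\alpha \in R[P(V)]$ depending only on $g$ and $V$. I take $S_V := \{h_\alpha\}_\alpha$ and set $Y(V) := \cV(S_V)$, which is by construction a closed subset of $\AA_{P(V)}$. Specialising $x_i \mapsto a_i \in D$ corresponding to $\phi = \sum a_i \otimes e_i \in D \otimes \Hom(V,U)$ gives $g_D(P_{V,U,D}(\phi)(p)) = \sum_\alpha h_{\alpha,D}(p)\,a^\alpha$, which recovers the pointwise description of the lemma.

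To verify that $Y$ is a closed subset of $\AA_P$ in the sense of Definition~\ref{de:ClosedSubsetPF}, I check the compatibility $\Phi_{V,W}(\Pi_{V,W}^{-1}(Y(V))) \subseteq Y(W)$. Let $p \in Y(V)(D)$ and $\psi \in D \otimes \Hom(V,W)$, and set $q := P_{V,W,D}(\psi)(p)$. For any $\chi \in D \otimes \Hom(W,U)$, the composition square in the definition of polynomial functor yields $P_{W,U,D}(\chi) \circ P_{V,W,D}(\psi) = P_{V,U,D}(\chi \circ \psi)$, and $\chi \circ \psi$ lies in $D \otimes \Hom(V,U)$. Consequently
\[
g_D(P_{W,U,D}(\chi)(q)) = g_D(P_{V,U,D}(\chi \circ \psi)(p)) = 0
\]
because $p \in Y(V)(D)$. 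Running the same argument over the universal $\chi$ living in $D[y_1,\ldots,y_m] \otimes \Hom(W,U)$, and extracting coefficients, shows that all the polynomial laws defining $Y(W)$ vanish on $q$, so $q \in Y(W)(D)$.

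The containment $g \in \cI_{Y(U)}$ is obtained by taking $V = U$ and $\phi = \id_U$: the polynomial-functor axiom $P_{U,U}(\id_U) = \id_{P(U)}$ gives $g_D(p) = g_D(P_{U,U,D}(\id_U)(p))$, which vanishes for $p \in Y(U)(D)$. For maximality, let $X \subseteq \AA_P$ be any closed subset with $g \in \cI_{X(U)}$. Given $V$, an $R$-domain $D$, $p \in X(V)(D)$ and $\phi \in D \otimes \Hom(V,U)$, the subset property of $X$ (applied via $\Phi_{V,U}$) places $P_{V,U,D}(\phi)(p)$ in $X(U)(D)$, so $g_D$ vanishes on it, whence $p \in Y(V)(D)$. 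The one mildly subtle point is reconciling the pointwise quantifier ``$\forall \phi \in D \otimes \Hom(V,U)$'' with $\cV(S_V)(D)$ when $D$ is a small finite domain; this is automatic once one passes to the universal $\phi$ over $D[x_1,\ldots,x_n]$, which is the reading on which the proof rests.
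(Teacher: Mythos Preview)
Your proof is correct and follows the same strategy as the paper: expand $g$ along the universal homomorphism $\sum_i x_i\otimes e_i$, collect the coefficients into $S_V\subseteq R[P(V)]$, and identify $Y(V)$ with $\cV(S_V)$.  You go a bit further than the paper by spelling out the functoriality check $\Phi_{V,W}(\Pi_{V,W}^{-1}(Y(V)))\subseteq Y(W)$ and the maximality claim, both of which the paper leaves as ``easy to check'', and your remark about the discrepancy between the literal pointwise quantifier and $\cV(S_V)(D)$ for small finite $D$ is well placed: as the paper's own Remark following the lemma (with $R=\FF_p$ and $h=x^p-x$) makes clear, the two descriptions genuinely diverge unless one reads the quantifier as ranging over the universal $\phi$, which is exactly the reading both you and the paper ultimately use.
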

\begin{proof}
It is easy to check that $Y(V)$ is a subset of $\AA_{P(V)}$ for all $V\in\fgfModR$ and that $Y$ is a subset of $\AA_P$. We need to check that $Y$ is a closed subset of $\AA_P$, i.e., that $Y(V)$ is a closed subset of~$\AA_{P(V)}$ for every $V\in\fgfModR$. 

Let $\phi_1,\ldots,\phi_n$ be a basis of $\Hom(V,U)$. For every $R$-algebra $A$, consider the map
\[
g_{A[x_1,\ldots,x_n]}\circ P_{V,U,A[x_1,\ldots,x_n]}(x_1\otimes \phi_1+\cdots+x_n\otimes\phi_n)\colon A[x_1,\ldots,x_n]\otimes P(V)\to A[x_1,\ldots,x_n].
\]
We have
\[
g_{A[x_1,\ldots,x_n]}\circ P_{V,U,A[x_1,\ldots,x_n]}(x_1\otimes \phi_1+\cdots+x_n\otimes\phi_n)|_{A\otimes P(V)}=\sum_{\alpha\in \ZZ_{\geq0}^n} x^{\alpha}g_{\alpha,A}
\]
where $g_{\alpha,A}\colon A\otimes P(V)\to A$. We get polynomial laws $g_{\alpha}=(g_{\alpha,A})_A\in R[P(V)]$. Set $S_V=\{g_\alpha\mid \alpha\in\ZZ_{\geq0}^n\}$. We claim that $Y(V)=\cV(S_V)$. Let $D$ be an $R$-domain and take $p\in Y(V)(D)$. Then, viewing $p$ as an element of $Y(V)(D[x_1,\ldots,x_n])$, we see that 
\[
g_{D[x_1,\ldots,x_n]}(P_{V,U,D[x_1,\ldots,x_n]}(\phi)(p))=0
\]
for all $\phi\in D[x_1,\ldots,x_n]\otimes\Hom(V,U)$. Using $\phi=x_1\otimes\phi_1+\cdots+ x_n\otimes \phi_n$, we get $p\in\cV(S_V)(D)$. Conversely, suppose that $p\in\cV(S_V)(D)$. Then
$$
g_{D[x_1,\ldots,x_n]}(P_{V,U,D[x_1,\ldots,x_n]}(x_1\otimes \phi_1+\cdots+x_n\otimes\phi_n)(p))=0
$$
Specializing the $x_i$ to elements of $D$, we find that 
$$
g_D(P_{V,U,D}(a_1\otimes \phi_1+\cdots+a_n\otimes\phi_n)(p))=0
$$
for all $a_1,\ldots,a_n\in D$. So $p\in Y(V)(D)$. So $Y(V)=\cV(S_V)$ is indeed closed.
\end{proof}

\begin{re}
It is not true in general that 
\[
Y(V)(D) = \{p \in X(V)(D) \mid\forall \phi \in \Hom(V, U): h_D(P(\phi)_D(p)) = 0 \}.
\]
For an example, take $R=\FF_p$, $P(V)=V$ and $h=x^p-x\in R[x]=R[P(R)]$. Then the right hand side above consists of all $p\in D\otimes V\cong D^n$ such that $x^p=x$ for every coordinate of $p$ while the left hand side also has the requirement that $(\alpha x)^p=\alpha x$ for all $\alpha\in E$ for every $D$-domain $E$. So $Y(V)(D)=0$.
\end{re}

\subsection{Gradings} \label{ssec:Gradings}

Let $P\colon \fgfModR \to \fgModR$ be a polynomial functor. For
each $U \in \fgfModR$, the $R$-algebra
$R[P(U)]$ has two natural gradings: first, the {\em ordinary}
grading that each coordinate ring $R[M]$ of a module $M$ has (see
Definition~\ref{de:RM}); and
second, a grading that takes into account the degrees of the homogeneous
components $P$, as follows. Write $P=P_0 \oplus P_1 \oplus \cdots
\oplus P_d$, so that $R[P(U)]$ is the tensor product of the $R[P_i(U)]$
by Proposition~\ref{prop:Products}. Then multiply the ordinary grading
on $R[P_i(U)]$ by $i$ and use these to define a grading on $R[P(U)]$,
called the {\em standard} grading. The standard grading has an alternative
characterisation, as follows: $f \in R[P(U)]$ is homogeneous of degree
$j$ if $f_A( P_{U,U,A}(a \otimes \id_U)(v))=a^j f_A(v)$ for all $A \in \AlgR$
and all $v \in A \otimes P(U)$. We have
\[
f_{A[t]}(v_0+tv_1+\cdots+t^dv_d)=\sum_{j=0}^{\infty}t^j f_{j,A}(v_0+v_1+\cdots+v_d)
\]
for all $A\in\AlgR$ and $v_i\in A\otimes P_i(U)$ where $f_j$ is the part of $f$ of standard degree $j$.

\begin{lm}
For any closed subset $X\subseteq\AA_P$ and any $U \in \fgfModR$, the ideal $\cI_X(U)$ is
homogeneous with respect to the standard grading.
\end{lm}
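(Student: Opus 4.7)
The plan is to take an arbitrary $f\in\cI_X(U)$, decompose it as $f=\sum_j f_j$ into its standard-graded components, and show directly that each $f_j$ lies in $\cI_X(U)$. For this we need to check, for every $R$-domain $D$ and every $p\in X(U)(D)$, that $f_{j,D}(p)=0$ for all $j$. The mechanism is to separate the standard-graded components by introducing a formal variable $t$ and then invoke the fact that $X(U)$ is preserved under the polynomial-functor action of scalar endomorphisms of~$U$.

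Concretely, I would pass from $D$ to the polynomial ring $D[t]$, which is again an $R$-domain, and use that the inclusion $D\hookrightarrow D[t]$ is an $R$-algebra monomorphism so that $p$ becomes an element of $X(U)(D[t])$. Because $X$ is a closed subset of $\AA_P$, the defining condition $\Phi_{U,U}(\Pi_{U,U}^{-1}(X(U)))\subseteq X(U)$ applied at $D[t]$ with the endomorphism $t\otimes\id_U\in D[t]\otimes\End(U)$ yields
\[
q:=P_{U,U,D[t]}(t\otimes\id_U)(p)\in X(U)(D[t]).
\]
Writing $p=v_0+v_1+\cdots+v_d$ according to the decomposition $P=P_0\oplus\cdots\oplus P_d$, the homogeneity of each $P_i$ gives $q=v_0+tv_1+\cdots+t^dv_d$, and then the alternative characterisation of the standard grading recalled just above the lemma yields
\[
f_{D[t]}(q)=\sum_{j\geq 0} t^j\, f_{j,D}(p).
\]
Since $f\in\cI_X(U)$ and $q\in X(U)(D[t])$, the left-hand side vanishes; since $D[t]$ is a free $D$-module with basis $1,t,t^2,\ldots$, each coefficient $f_{j,D}(p)$ must vanish individually. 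This is exactly what was required.

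I do not anticipate a real obstacle here: the two ingredients, namely the closed-subset axiom used with the scalar endomorphism $t\otimes\id_U$ and the characterisation of standard-degree homogeneity via $t^j$-expansion, are both already set up in the preceding text. The only step that deserves a moment of care is verifying that $X(U)(D)$ embeds into $X(U)(D[t])$, which uses that $X(U)$ is itself a subset of $\AA_{P(U)}$ and that $D\hookrightarrow D[t]$ is an $R$-algebra monomorphism; and that the map $P_{U,U,D[t]}(t\otimes\id_U)$ really acts on the $i$-th homogeneous piece by multiplication by $t^i$, which is immediate from the definition of the homogeneous components $P_i$ together with the decomposition $R[P(U)]\cong\bigotimes_i R[P_i(U)]$ from Proposition~\ref{prop:Products}.
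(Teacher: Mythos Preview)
Your proposal is correct and follows essentially the same approach as the paper's proof: both apply $P_{U,U,D[t]}(t\otimes\id_U)$ to a point of $X(U)(D)$, use the displayed identity preceding the lemma to separate the standard-graded components as coefficients of powers of $t$, and conclude from the vanishing of $f_{D[t]}$ on $X(U)(D[t])$. Your write-up is simply more explicit about why $p$ lands in $X(U)(D[t])$ and why $q$ stays in $X(U)(D[t])$, but the argument is the same.
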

\begin{proof}
Take $f\in\cI_X(U)$ and let $D$ be an $R$-domain. Then 
\[
0=f_{D[t]}(P_{U,U,D[t]}(t\otimes \id_{U})(v_0+v_1+\cdots+v_d))=f_{D[t]}(v_0+tv_1+\cdots+t^dv_d)
\]
for all $v_i\in D\otimes P_i(U)$ such that $v_0+v_1+\cdots+v_d\in X(U)(D)$. Hence the homogeneous parts of $f$ are also contained in $\cI_X(U)$.
\end{proof}

\section{Proof of the main theorem} \label{sec:Proof}

\noindent In this section we prove Theorem~\ref{thm:Main}. Let $R$ be
a ring whose spectrum is Noetherian and let $P\colon\fgfModR \to \fgModR$ a polynomial functor of finite degree. We will prove that any chain $\AA_P
\supseteq X_1 \supseteq X_2 \supseteq\cdots$ of closed subsets eventually stabilises.

\subsection{Reduction to the case of a domain}
\label{ssec:ToDomain}

Since $\Spec(R)$ is Noetherian, the ring $R$ has finitely many minimal primes $\fp_1,\ldots,\fp_k$. By Lemma~\ref{lm:Branching} with $r=1$, the sequence $\AA_P\supseteq X_1 \supseteq X_2 \supseteq \cdots$ stabilises if and only if the sequence $\AA_{P_{R/\fp_i}}\supseteq X_{1,R/\fp_i} \supseteq X_{2,R/\fp_i} \supseteq \cdots$ stabilises for each $i\in[k]$. So from now on we assume that $R$ is a domain, we write $K_\fp:=\Frac(R/\fp)$ for $\fp \in \Spec(R)$, $K:=K_{(0)}=\Frac(R)$, and we let $\overline{K},\overline{K_\fp}$ be algebraic closures of $K,K_\fp$, respectively. 

\subsection{A stronger statement}

We will prove the following stronger statement which clearly implies Theorem~\ref{thm:Main}.

\begin{thm}\label{Sigma}
Let $(R,P,X)$ be a triple consisting of a domain $R$ with Noetherian spectrum, a polynomial functor $P\colon\fgfModR \to \fgModR$ of finite degree and a closed subset $X\subseteq \AA_P$. Then $(R,P,X)$ satisfies the following conditions:
\begin{enumerate}
\item Every descending chain $X=X_1 \supseteq X_2\supseteq \cdots$ of closed subsets of $X$ eventually stabilises.
\item There exists a nonzero $r \in R$ such that the following holds for all $U \in \fgfModR$: if $f \in R[P(U)]$ vanishes identically on $X(U)(\overline{K})$, then $f$ vanishes identically on $X(U)(\overline{K_\fp})$ for all primes $\fp \in \Spec(R[1/r])$.
\end{enumerate}
\end{thm}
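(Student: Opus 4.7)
The plan is to prove conditions (1) and (2) simultaneously by a nested induction, with (2) serving as the \emph{priming} hypothesis that enables the inductive step for (1): a strictly descending chain must be separated by polynomial laws, and (2) guarantees that this separation is visible on a dense open in $\Spec R$, which is exactly what is needed to split the chain via Lemma~\ref{lm:Branching}. The outer induction is Noetherian induction on $\Spec R$: I assume both conditions hold for every triple $(R/\fp, P_{R/\fp}, X_{R/\fp})$ with $\fp \in \Spec R \setminus \{(0)\}$. The inner induction is on the degree $d$ of $P$ (with, for the chain argument inside fixed degree, an additional descent on a dimension-polynomial invariant for $X$ afforded by Proposition~\ref{prop:DimensionFunction} and its constructibility strengthening in Proposition~\ref{prop:Last}).

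\textbf{Base case.} If $d=0$ then $P$ is a constant functor equal to some finitely generated $R$-module $M$ and $X$ is a closed subset of $\AA_M$. Condition (1) is then Proposition~\ref{prop:TopHilbert}, and condition (2) is Proposition~\ref{prop:Vanishing}. These are the two fundamental inputs from Section~\ref{sec:The topological space AM} around which the whole argument is built.

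\textbf{Inductive step for (2).} For each fixed $U \in \fgfModR$, Proposition~\ref{prop:Vanishing} applied to $X(U) \subseteq \AA_{P(U)}$ provides a nonzero $r_U \in R$ that witnesses the generic-vanishing conclusion on that single $U$. The task is to unify these into one $r$ valid for \emph{all} $U$ simultaneously. The plan is to use the Friedlander--Suslin equivalence (Theorem~\ref{thm:FriedlanderSuslin}), which reduces $P$-level data of degree $\leq d$ to a single $S_{\leq d}(R^d)$-module, together with Proposition~\ref{prop:GenericFreeness} to ensure that $X(V)$ for $V$ of larger rank is controlled, via the shift decomposition $P \circ \Sh_R \cong P \oplus Q$ (Lemma~\ref{lm:Shift}) with $\deg Q < d$, by (2) for $P$ and for the strictly lower-degree $Q$ (the latter provided by the inner inductive hypothesis). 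Combining the finitely many $r_U$'s for rank-$\leq d$ test modules with the $r_Q$ from the lower-degree case yields the required uniform $r$.

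\textbf{Inductive step for (1).} Let $X_1 \supseteq X_2 \supseteq \cdots \subseteq X$ be a descending chain. Apply (2) (just proved) to obtain a dense open $\Spec R[1/r] \subseteq \Spec R$. By Lemma~\ref{lm:Branching}, it suffices to stabilise the chain (a) after base change to each $R/\fp_i$, for $\fp_i$ a minimal prime of $R/(r)$, and (b) after base change to $R[1/r]$. Case (a) is handled by the outer Noetherian induction hypothesis. For case (b), the plan is to adapt the shift-and-project induction from \cite{draisma}: over the fraction field $K$, Proposition~\ref{prop:Irred} extracts a generically irreducible subfunctor that detects the descent, and combined with Proposition~\ref{prop:GenericFreeness} and condition (2) this yields a polynomial transformation $\alpha\colon Q \to P$ with $\deg Q < d$ (or with strictly smaller associated dimension polynomial) through which each $X_n$ is generically captured, so that stabilisation is reduced to a chain for a triple of strictly smaller inner-induction invariant. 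The main obstacle will be precisely this \emph{globalisation step}: making the field-case arguments of \cite{draisma} uniform over all $\fp \in \Spec R[1/r]$ at once, so that the inner invariant $(d,f)$ strictly decreases \emph{simultaneously} at every $\overline{K_\fp}$. This is where Propositions~\ref{prop:Irred}, \ref{prop:GenericFreeness}, \ref{prop:DimensionFunction} and \ref{prop:Last} must be woven together carefully, and is the technical heart of the argument.
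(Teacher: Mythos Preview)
Your overall architecture---Noetherian induction on $\Spec R$ glued via Lemma~\ref{lm:Branching}, combined with an inner induction on $P$---matches the paper's, and your base case is exactly right. But two of your inductive steps have genuine gaps.

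\textbf{The step for (2) does not reduce degree.} The shift decomposition $\Sh_R(P) \cong P \oplus Q$ with $\deg Q < d$ gives $X(R^{n+1}) = \Sh_R(X)(R^n) \subseteq \AA_{P(R^n) \oplus Q(R^n)}$, but this is a closed subset of a \emph{product}, not itself a product of a closed subset in $\AA_{P(R^n)}$ with one in $\AA_{Q(R^n)}$. Knowing condition (2) for closed subsets of $\AA_Q$ (your inner hypothesis) and for $X(R^k)$ with $k \leq d$ (Proposition~\ref{prop:Vanishing}) does not yield (2) for $\Sh_R(X)$: that closed subset lives in $\AA_{P \oplus Q}$, a polynomial functor of the \emph{same} degree $d$, so neither Friedlander--Suslin nor your degree induction applies to it. There is no direct reduction to lower degree here.

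\textbf{The invariant for (1) is circular.} Invoking Proposition~\ref{prop:Last} as part of your inner-induction invariant is not available: that proposition is proved \emph{after} Theorem~\ref{Sigma}, by the very same induction. Likewise, your claim that one obtains ``$\alpha\colon Q \to P$ with $\deg Q < d$'' is not what happens; the projection the paper uses lands in a functor of the same degree $d$.

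What the paper actually does is more delicate. After localising and passing to a finite extension of $R$ (Proposition~\ref{prop:IrreducibleFactor}, Lemma~\ref{lm:sigmabranching_integral})---a step your outline omits---one fixes an irreducible subfunctor $M \subseteq P_d$ and sets $P' := P/M$, $X' := \overline{\pi(X)}$. The \emph{outer} induction is on pairs $(R,P)$ under the relation $\to$ of \S\ref{ssec:OuterInduction}, which allows replacing $P$ by $P'$ even though $\deg P' = d$ (the top-degree part strictly shrinks over $K$); well-foundedness is Lemma~\ref{wellfoundedto}. The \emph{inner} induction, for fixed $(R,P)$, is on $(X',\delta_X)$, where $\delta_X$ is the least standard degree of an element of $\cI_{X(U)}$ not pulled back from $\cI_{X'(U)}$. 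When $\delta_X < \infty$, a directional-derivative construction (\S\ref{ssec:Directional}) produces $h \in R[P(U)]$ of standard degree $\delta_X - p^e d$ not vanishing on $X(U)(\overline{K})$; the closed locus $Y \subseteq X$ where $h$ vanishes satisfies $Y < X$ in the inner order, and on the complement one shows---globally over $\Spec R$, via the auxiliary subfunctor $N \subseteq M^*$ of \S\ref{ssec:LocShift}---that $\Sh_U(X)[1/h]$ is homeomorphic to its image in $(\Sh_U(P)/M)[1/h]$, to which the outer induction applies. Both (1) and (2) are then assembled from the pieces $Y$ and $\Sh_U(X)[1/h]$. This $\delta_X$/directional-derivative mechanism is the technical heart of the argument, and nothing in your proposal substitutes for it.
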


\begin{re}
Condition (2) of the theorem means that $\cI_{X_{R[1/r]}}$ is determined by $\cI_{X_{\kbar}}$. More precisely, setting $R'=R[1/r]$, for every $U\in\fgfMod_{R'}$, the ideal 
\[
\cI_{X_{R'}}(U)=\cI_{X_{R'}(U)}\subseteq R'[P_{R'}(U)]
\]
is the pull-back of the ideal in $\kbar[P_{R'}(\kbar\otimes U)]$ of the affine variety $X_{R'}(\kbar\otimes U)$.
\end{re}

The proof of Theorem \ref{Sigma} is a somewhat intricate induction, combining
induction on $P$, Noetherian induction on $\Spec(R)$ and
induction on minimal degrees of functions in the ideal of $X$---for details,
see below.

\begin{no}
For any fixed triple $(R,P,X)$, we denote conditions $(1)$ and $(2)$ of Theorem~\ref{Sigma} by $\Sigma(R, P, X)$.
\end{no}

\subsection{The induction base}

If $P$ has degree zero, then $X$ is just a closed subset
of $\AA_{P(0)}$. Here, the Noetherianity statement is
Proposition~\ref{prop:TopHilbert} and the statement about vanishing
functions is Proposition~\ref{prop:Vanishing}.

\subsection{The outer induction} \label{ssec:OuterInduction}

To prove the theorem for $P$ of positive degree, we will show that $\Sigma(R,P,X)$ is implied by $\Sigma(R',P',X')$ where $X'$ is a closed subset of $\AA_{P'}$ and $(R',P')$ ranges over pairs that have one of the following forms:
\begin{enumerate}[label = (\roman*)]
\item $(R',P')=(R/\fp,P_{R/\fp})$ for some nonzero prime $\fp$ of $R$; or
\item $(R',P')$ where $R'$ is a domain that is a finite extension of a
localisation $R[1/r]$ of~$R$, $\deg P' \leq \deg P=:d$, for $K':=\Frac(R')$ we have $P'_{K'} \not \cong P_{K'}$ and for the largest~$e$ such that the homogeneous parts $P'_{e,K'}$
and $P_{e,K'}$ are not isomorphic, the former is a quotient of the
latter. 
\end{enumerate}
In both cases, we write $(R,P) \to (R',P')$. We consider the class $\Pi$ of all the pairs $(R, P)$. The reflexive
and transitive closure of the relation $\to$ is a partial order on  $\Pi$.

\begin{lm}\label{wellfoundedto}
The partial order on $\Pi$ is well-founded.
\end{lm}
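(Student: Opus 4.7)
The plan is to assume for contradiction the existence of an infinite sequence of elementary $\to$-steps
\[
(R_0,P_0)\to(R_1,P_1)\to(R_2,P_2)\to\cdots
\]
and to derive a contradiction by combining two well-founded invariants: one capturing the ``ring side'' and one the ``functor side'' of each pair.

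First, I track the kernels $J_i:=\ker(R_0\to R_i)$, which are prime ideals of $R_0$ because each $R_i$ is a domain. In a step of type (ii), both $R_i\hookrightarrow R_i[1/r_i]$ and $R_i[1/r_i]\hookrightarrow R_{i+1}$ are injective, so $J_{i+1}=J_i$. In a step of type (i) with $R_{i+1}=R_i/\fp_i$, I claim that $J_{i+1}\supsetneq J_i$ strictly; this is the key Sub-claim, discussed below. Since $\Spec(R_0)$ is Noetherian, no infinite strictly ascending chain of prime ideals of $R_0$ exists, so only finitely many steps of the chain are of type (i), and from some index $i_0$ onward every step is of type (ii).

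To rule out an infinite tail of type (ii) steps, I introduce
\[
v_i:=(f_{i,d},f_{i,d-1},\ldots,f_{i,0}),\qquad f_{i,j}(n):=\dim_{K_i}\bigl((P_i)_{K_i}\bigr)_j(K_i^n),
\]
where $d:=\deg P_{i_0}$ and $K_i:=\Frac(R_i)$; by Proposition~\ref{prop:DimensionFunction} each $f_{i,j}$ is a polynomial with nonnegative integer coefficients of degree at most $j$. I order these tuples lexicographically from the top component downward, comparing polynomials by leading-coefficient-first lex on their coefficient vectors; this order is well-founded because lex on $\ZZ_{\geq 0}^{d+1}$ is well-founded, and lex on a finite product of well-founded orders is well-founded. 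For a type (ii) step, let $e$ be the largest $j$ for which the degree-$j$ parts of $(P_i)_{K_{i+1}}$ and $(P_{i+1})_{K_{i+1}}$ are not isomorphic. For $j>e$ they are isomorphic over $K_{i+1}$, so $f_{i+1,j}=f_{i,j}$, since dimensions are preserved under field extension. For $j=e$ the step (ii) hypothesis supplies a proper surjection of polynomial functors over $K_{i+1}$; evaluating at $K_{i+1}^n$ for $n\geq d$ and invoking the Friedlander--Suslin equivalence (Theorem~\ref{thm:FriedlanderSuslin}) yields a proper surjection of finite-dimensional $S_{\leq d}(K_{i+1}^n)$-modules, whence $f_{i+1,e}(n)<f_{i,e}(n)$ for all $n\geq d$ and therefore $f_{i+1,e}<f_{i,e}$ in the polynomial lex order. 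Thus $v_{i+1}<v_i$ strictly, contradicting well-foundedness.

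The main obstacle is the Sub-claim that $J_{i+1}\supsetneq J_i$ at a type (i) step, equivalent to showing that the nonzero prime $\fp_i\subseteq R_i$ contracts nontrivially along $R_0/J_i\hookrightarrow R_i$. I would prove this via a structural lemma: if $A$ is a domain and $B$ is built from $A$ by finitely many operations of the form ``localize at a nonzero element of the current ring'' or ``pass to a domain finite over the current ring inside a fixed ambient domain'', then every nonzero prime of $B$ contracts to a nonzero prime of $A$. Localizations are handled by the standard bijective correspondence of primes avoiding the localized element; finite extensions use the classical fact that in an integral extension of domains, the only prime lying over $(0)$ is $(0)$. A parallel induction on $i$, using that quotienting a domain by a prime commutes with both localization and with the formation of finite extensions, shows that $R_i$ is obtained from $R_0/J_i$ by exactly such a sequence of type (ii) operations for every $i$. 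Applying the structural lemma with $A=R_0/J_i$ and $B=R_i$ then forces the required nontrivial contraction, completing the argument.
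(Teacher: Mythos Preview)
Your ring-side argument is correct and is essentially the paper's own proof in algebraic rather than geometric language: your kernels $J_i$ satisfy $V(J_i)=\overline{\im(\Spec(R_i)\to\Spec(R_0))}$, and your structural lemma---that nonzero primes contract nontrivially through localisations and finite extensions of domains---is exactly the incomparability property the paper invokes via \cite[Corollary~4.18]{eisenbud} to conclude that $\overline{\im\beta_i}$ strictly shrinks at each type-(i) step.

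On the functor side there is a gap. You claim that Proposition~\ref{prop:DimensionFunction} yields polynomials with \emph{nonnegative integer coefficients}, and base well-foundedness on lex over $\ZZ_{\geq 0}^{d+1}$. But that proposition only gives integer-valued polynomials; the coefficients need not be nonnegative, nor even integers (e.g.\ $\dim_K\bigwedge^2 K^n=\binom{n}{2}=\tfrac12 n^2-\tfrac12 n$). Worse, the ``eventually $\leq$'' order is \emph{not} well-founded on integer-coefficient polynomials of bounded degree taking nonnegative values on $\ZZ_{\geq0}$: the sequence $f_k(n)=(n-k)^2+k$ for $k=0,1,2,\ldots$ descends forever. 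So your well-foundedness justification fails as written. The repair is immediate and already implicit in your own argument: replace each polynomial $f_{i,j}$ by the single nonnegative integer $f_{i,j}(d)=\dim_{K_i}(P_{i,j})_{K_i}(K_i^d)$. Your Friedlander--Suslin step at $n=d\geq e$ shows this value strictly drops in degree $e$ and is unchanged above, so the tuple in $\ZZ_{\geq0}^{d+1}$ decreases in a genuinely well-founded lex order. This is effectively how the paper (deferring to \cite[Lemma~12]{draisma}) disposes of type-(ii) chains.
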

\begin{proof}
Suppose that we had an infinite sequence 
\[
(R_0,P_0)\to(R_1,P_1) \to(R_2,P_2) \to \cdots
\]
of such steps. By the Friedlander-Suslin lemma, any sequence of steps
of type~(ii) only must terminate (see also \cite[Lemma 12]{draisma}). So
our sequence contains infinitely many steps of type (i).

Each step $(R,P) \to(R',P')$ induces a morphism $\alpha\colon \Spec(R') \to \Spec(R)$. This morphism $\alpha$ has the property that for irreducible closed subsets $C \subsetneq D\subseteq \Spec(R')$, we have $\overline{\alpha(C)}\subsetneq \overline{\alpha(D)}$. This holds trivially for steps of type (i), where the morphism $\alpha\colon\Spec(R/\fp)\to\Spec(R)$ is a closed embedding, and also for steps
of type (ii) by elementary properties of localisation and of integral extensions of rings (see, e.g., \cite[Corollary 4.18 (Incomparibility)]{eisenbud}).

Let $\alpha_i\colon\Spec(R_i)\to\Spec(R_{i-1})$ be the morphism
induced by $(R_{i-1},P_{i-1}) \to(R_i,P_i)$ and take
$\beta_i=\alpha_1 \circ \cdots \circ \alpha_i\colon \Spec(R_i) \to
\Spec(R_0)$. Then the maps $\beta_i$ have the same incomparability
property as the $\alpha_i$.  Hence, whenever the step $(R_{i-1},P_{i-1}) \to (R_{i},P_{i})$ is of type (i), there is the inclusion of irreducible closed sets $\im \alpha_i \subsetneq\Spec(R_{i-1})$ and therefore $\overline{\im \beta_i}\subsetneq\overline{\im \beta_{i-1}}$ is a strict inclusion. This contradicts the Noetherianity of~$\Spec(R_0)$.
\end{proof}

By Lemma~\ref{wellfoundedto} we can proceed by induction on $\Pi$,
namely, in proving that $\Sigma(R,P,X)$ holds, we may assume $\Sigma(R', P', X')$ whenever $(R',P')\leftarrow(R,P)$.

\begin{lm}\label{lm:sigmabranching}
Let $r\in R$ be a nonzero element and let $\fp_1,\ldots,\fp_k$ be the minimal primes of~$R/(r)$. Assume that $\Sigma(R[1/r],P_{R[1/r]},X_{R[1/r]})$ and $\Sigma(R/\fp_i,P_{R/\fp_i},X_{R/\fp_i})$ for each $i\in[k]$ hold. Then $\Sigma(R,P,X)$ holds as well.
\end{lm}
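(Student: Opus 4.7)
The plan is to verify conditions~(1) and~(2) of $\Sigma(R,P,X)$ separately, using Lemma~\ref{lm:Branching} for the first and the localisation hypothesis alone for the second.

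For condition~(1), I would take any descending chain $X=X_1\supseteq X_2\supseteq\cdots$ of closed subsets of $X$ and form its base changes: the chain $X_{1,R[1/r]}\supseteq X_{2,R[1/r]}\supseteq\cdots$ stabilises by condition~(1) of $\Sigma(R[1/r],P_{R[1/r]},X_{R[1/r]})$, and for each $i\in[k]$ the chain $X_{1,R/\fp_i}\supseteq X_{2,R/\fp_i}\supseteq\cdots$ stabilises by condition~(1) of $\Sigma(R/\fp_i,P_{R/\fp_i},X_{R/\fp_i})$. Choose $N$ beyond all $k+1$ stabilisation points. For $n\geq N$, Lemma~\ref{lm:Branching} applied to $X_n\supseteq X_{n+1}$ with the given $r$ and minimal primes $\fp_1,\ldots,\fp_k$ then yields $X_n=X_{n+1}$.

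For condition~(2), the hypotheses on the $R/\fp_i$ are not needed; everything follows from $\Sigma(R[1/r],P_{R[1/r]},X_{R[1/r]})$. Applying condition~(2) of that hypothesis produces a nonzero $s\in R[1/r]$; write $s=s'/r^m$ with $s'\in R$ nonzero and set $r':=rs'$, so that $R[1/r']=R[1/r][1/s]$ and hence $\Spec(R[1/r'])\subseteq\Spec(R[1/r])$. Given $U\in\fgfModR$ and $f\in R[P(U)]$ vanishing identically on $X(U)(\overline{K})$, the base change $f_{R[1/r]}$ lies in $R[1/r][P_{R[1/r]}(R[1/r]\otimes U)]$ and vanishes on the corresponding $\overline{K}$-points of $X_{R[1/r]}$ (note that $\Frac(R[1/r])=K$). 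By hypothesis it then vanishes on $X_{R[1/r]}(R[1/r]\otimes U)(\overline{K_\fp})$ for every $\fp\in\Spec(R[1/r'])$, and by Definition~\ref{de: base change for X} this set equals $X(U)(\overline{K_\fp})$; hence $f$ itself vanishes there, as required.

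The only real issue is bookkeeping around base change, namely the identifications $X_{R[1/r]}(R[1/r]\otimes U)(D)=X(U)(D)$ for every $R[1/r]$-domain $D$ and the compatibility of the vanishing relation with the embedding $R[P(U)]\hookrightarrow R[1/r][P_{R[1/r]}(R[1/r]\otimes U)]$; both are immediate from the definitions. There is no genuine difficulty — the lemma is purely a packaging result that lets subsequent arguments reduce to the localisation plus finitely many closed subschemes.
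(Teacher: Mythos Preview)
Your proof is correct and follows exactly the same approach as the paper's: condition~(1) via Lemma~\ref{lm:Branching} applied to the stabilised base-changed chains, and condition~(2) via condition~(2) of $\Sigma(R[1/r],P_{R[1/r]},X_{R[1/r]})$ alone. The paper's proof is a terse two-sentence version of what you wrote; your added bookkeeping (choosing $N$, converting $s\in R[1/r]$ to $r'=rs'\in R$, and the base-change identifications) is exactly what is left implicit there.
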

\begin{proof}
By Lemma \ref{lm:Branching}, we see that condition (1) for $(R,P,X)$ follows from condition (1) for $(R[1/r],P_{R[1/r]},X_{R[1/r]})$ together with $\Sigma(R/\fp_i,P_{R/\fp_i},X_{R/\fp_i})$ for each $i\in[k]$. Condition (2) for $(R,P,X)$ follows from condition (2) for $(R[1/r],P_{R[1/r]},X_{R[1/r]})$.
\end{proof}

Combining this lemma with our induction hypothesis, we see
that in order to prove $\Sigma(R,P,X)$ it suffices to prove
$\Sigma(R[1/r],P_{R[1/r]},X_{R[1/r]})$ for some $r\in R$. So we may
replace $(R,P,X)$ by $(R[1/r],P_{R[1/r]},X_{R[1/r]})$ whenever this
is convenient.

\subsection{Finding an irreducible factor}
\label{ssec:IrredFactor}
Now let $P\colon\fgfModR \to \fgModR$ be a fixed polynomial functor of degree $d>0$ over a domain $R$ with Noetherian spectrum. Recall that $K$ is the fraction field of $R$.

Suppose first that the base change $P_{K}$ has degree $<d$. Then $K \otimes P_d(U)=0$ for all $U \in \fgfModR$. In particular, this holds for $U=R^d$. So since $P_d(U)$ is a finitely generated $R$-module, there exists a nonzero $r \in R$ such that $R[1/r] \otimes P_d(U)=0$. By the Friedlander-Suslin lemma (Theorem~\ref{thm:FriedlanderSuslin}), we then find $(P_d)_{R[1/r]}=0$. In this case, we replace $(R,P,X)$ by $(R[1/r],P_{R[1/r]},X_{R[1/r]})$. By repeating this at most $d$ times, we may assume that the base change $P_{K}$ has the same degree as $P$.

We want a polynomial subfunctor $M$ of the top-degree part $P_d$ of
$P$ whose base change with $\overline{K}$ is an irreducible polynomial
subfunctor of $(P_d)_{\kbar}$. In the next lemma, we show that such an $M$ exists after passing from $R$ to a suitable finite extension of one of its localisations.

\begin{prop}\label{prop:IrreducibleFactor}
There exist a finite extension $R'$ of a localisation $R[1/r]$ of $R$ and a polynomial subfunctor $M$ of the top-degree part of the polynomial functor $P_{R'}$ such that the base change~$M_{\kbar}$ is an irreducible polynomial subfunctor of $P_{d,\kbar}$.
\end{prop}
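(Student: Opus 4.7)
The plan is to produce a simple subobject of $P_{d,\kbar}$ over the algebraic closure, descend it to a finite extension $K'$ of $K$, and then lift it from $K'$ to a finite extension $R'$ of a localisation $R[1/r]$ of $R$. Throughout, set $U_0 := R^d$, a free $R$-module of rank $\geq d$. By the Friedlander--Suslin lemma (Theorem~\ref{thm:FriedlanderSuslin}) together with its compatibility with base change, polynomial subfunctors of $P_{d,L}$ for any $R$-algebra $L$ correspond bijectively to $(L \otimes S_{\leq d}(U_0))$-submodules of $L \otimes P_d(U_0)$, and irreducible subfunctors correspond to simple submodules.

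Since $\kbar \otimes S_{\leq d}(U_0)$ is a finite-dimensional $\kbar$-algebra and $\kbar \otimes P_d(U_0)$ is a finitely generated module over it, there exists a simple $(\kbar \otimes S_{\leq d}(U_0))$-submodule $\overline{N} \subseteq \kbar \otimes P_d(U_0)$. Pick $R$-module generators $g_1,\ldots,g_m$ of $P_d(U_0)$ and a $\kbar$-basis $b_1,\ldots,b_k$ of $\overline{N}$; writing each $b_i = \sum_j c_{ij} \otimes g_j$ with $c_{ij} \in \kbar$, let $K'$ be the finite extension of $K$ generated by all the $c_{ij}$, and let $\overline{N}' \subseteq K' \otimes P_d(U_0)$ be the $K'$-span of $b_1,\ldots,b_k$. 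Then $\kbar \otimes_{K'} \overline{N}' = \overline{N}$. Because $\kbar \otimes S_{\leq d}(U_0)$ preserves $\overline{N}$, its $K'$-subalgebra $K' \otimes S_{\leq d}(U_0) = S_{\leq d}(K' \otimes U_0)$ preserves $\overline{N}'$; and any proper submodule of $\overline{N}'$ would base-change to a proper submodule of $\overline{N}$, contradicting simplicity. Thus $\overline{N}'$ corresponds via Friedlander--Suslin over $K'$ to an irreducible subfunctor $\overline{M}'$ of $P_{d,K'}$ with $\overline{M}'_{\kbar}$ irreducible.

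Now I move from $K'$ to the level of rings. Pick a primitive element $\alpha$ of $K'/K$ with minimal polynomial $p(T) \in K[T]$. After inverting a suitable nonzero $r \in R$, the coefficients of $p(T)$ lie in $R[1/r]$; set $R' := R[1/r][T]/(p(T))$, a domain that is free and finite over $R[1/r]$, with $\Frac(R') = K'$. Define
\[
N := \{\, v \in P_{d,R'}(R'^d) \mid 1 \otimes v \in \overline{N}' \,\} \subseteq P_{d,R'}(R'^d) = R' \otimes P_d(U_0).
\]
A clearing-denominators argument as in the proofs of Lemma~\ref{lm:GenericFreeness} and Proposition~\ref{prop:Irred} shows that $K' \otimes_{R'} N = \overline{N}'$; moreover $N$ is $S_{\leq d}(R'^d) = R' \otimes S_{\leq d}(U_0)$-stable, and so corresponds via Friedlander--Suslin over $R'$ to a polynomial subfunctor $M \subseteq P_{d,R'}$. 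Since the equivalence commutes with base change, $M_{K'}$ corresponds to $K' \otimes_{R'} N = \overline{N}'$, i.e., $M_{K'} = \overline{M}'$, and therefore $M_{\kbar} = \overline{M}'_{\kbar}$ is the irreducible subfunctor of $P_{d,\kbar}$ corresponding to $\overline{N}$.

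The main obstacle is the first descent, from $\kbar$ to the finite extension $K'$: one needs $\overline{N}$ to actually arise by base change from a $K'$-submodule, rather than merely be contained in $K' \otimes P_d(U_0)$, which forces the choice of $K'$ to contain the coefficients of a full $\kbar$-basis of $\overline{N}$. Once this is in place, the descent from $K'$ to $R'$ is the clearing-denominators trick applied inside the Friedlander--Suslin equivalence, essentially the same manoeuvre as in Proposition~\ref{prop:Irred}.
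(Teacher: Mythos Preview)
Your proof is correct and follows the same strategy as the paper: reduce to Schur-algebra modules via Friedlander--Suslin, pick a simple submodule of $\kbar\otimes P_d(R^d)$, and descend to a finite $R[1/r]$-algebra by adjoining the finitely many algebraic coefficients that appear in a $\kbar$-basis. The paper carries this out in a single step, setting $R':=R[1/r][\alpha_{ij}]$ directly and taking $M'$ to be the $S_{\leq d}((R')^d)$-submodule \emph{generated} by those basis vectors, whereas you route through the intermediate field $K'$ first and use the preimage construction $N=\{v\mid 1\otimes v\in\overline{N}'\}$; both choices base-change to the same simple $\kbar$-module, so the conclusion is the same.

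One small gap: a primitive element for $K'/K$ need not exist when the extension is inseparable (e.g.\ $K=\FF_p(s,t)$ and $K'=K(s^{1/p},t^{1/p})$). In positive characteristic you should instead set $R':=R[1/r][c_{ij}\mid i,j]$, adjoining all the basis coefficients as the paper does; this is still a finite extension of $R[1/r]$, and the remainder of your argument goes through verbatim.
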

\begin{proof}
The $S_d(\kbar^d)$-module $P_{d,\kbar}(\kbar^d) = \kbar \otimes P_d(R^d)$
is finite-dimensional and hence has an irreducible submodule $N'$. It
is finitely generated, say of dimension $n>0$. Let $\sum_j\alpha_{ij}
\otimes m_{ij}$ for $i = 1, \ldots, n$ be a $\kbar$-basis. By the
Friedlander-Suslin lemma, the irreducible submodule $N'$ corresponds
to an irreducible polynomial subfunctor~$N$ of~$P_{d,\kbar}$. The
elements $\alpha_i$ are algebraic over the fraction field $K$ of
$R$. Let $r\in R$ be the product of all the denominators appearing
in their minimal polynomials. Then $R' = R[1/r][\alpha_1, \ldots,
\alpha_n]$ is a finite extension of the localisation $R[1/r]$ of $R$
since the $\alpha_i$ are integral over $R[1/r]$. Consider the submodule
$M'$ of the $S_d(R'^d)$-module $P_{d,R'}((R')^d)$ generated by the elements
$\sum_j\alpha_{ij} \otimes m_{ij}$. By the Friedlander-Suslin lemma, $M'$
corresponds to a polynomial subfunctor $M$ of $P_{d,R'}$ whose base change
$M_{\kbar}=N$ is an irreducible polynomial subfunctor of $P_{d,\kbar}$.
\end{proof}

Let $r\in R$ and $R'$ be as in the previous proposition. We would like to reduce to the case where $R'=R$. As before, we can replace $(R,P,X)$ by $(R[1/r],P_{R[1/r]},X_{R[1/r]})$, so that $R'$ is a finite extension of $R$.  We now prove a version of Lemma~\ref{lm:sigmabranching} for such extensions.

\begin{lm}\label{lm:sigmabranching_integral}
Assume that $\Sigma(R',P_{R'},X_{R'})$ holds. Then $\Sigma(R,P,X)$ holds as well.
\end{lm}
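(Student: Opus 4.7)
The plan is to verify conditions $(1)$ and $(2)$ of $\Sigma(R, P, X)$ separately, in both cases transferring them from the assumed $\Sigma(R', P_{R'}, X_{R'})$ via the finite extension $R \subseteq R'$.

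Condition $(1)$ is immediate: a descending chain $X = X_1 \supseteq X_2 \supseteq \cdots$ of closed subsets of $\AA_P$ base-changes to a descending chain $X_{R'} = X_{1,R'} \supseteq X_{2,R'} \supseteq \cdots$ in $\AA_{P_{R'}}$, which stabilises by hypothesis, and Lemma~\ref{lm:integralextensions_polyfunctor} then forces $X_n = X_{n+1}$ in $\AA_P$ at the same index.

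For condition $(2)$, let $r' \in R' \setminus \{0\}$ be the element supplied by the primed triple, and consider the morphism $\pi\colon\Spec(R') \to \Spec(R)$. Since $R'/R$ is finite, $\pi$ is surjective and closed; and since $K \otimes_R R'$ is a field (by integrality of $R'/R$, any nonzero element of $R'$ is invertible in $K \otimes_R R'$), the fibre of $\pi$ over the generic point of $\Spec(R)$ consists only of $(0) \in \Spec(R')$, which does not contain~$r'$. Hence $\pi(V(r'))$ is a proper closed subset of $\Spec(R)$, so I can choose a nonzero $r \in R$ with $V(r) \supseteq \pi(V(r'))$. I claim that this $r$ works. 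Given $U \in \fgfModR$ and $f \in R[P(U)]$ vanishing on $X(U)(\overline{K})$, its image in $R'[P_{R'}(R' \otimes U)]$ vanishes on $X_{R'}(R'\otimes U)(\overline{K'})$ because $\overline{K'} = \overline{K}$ (as $K'/K$ is algebraic); applying condition $(2)$ for the primed triple propagates this to $X_{R'}(R'\otimes U)(\overline{K_\fq})$ for every $\fq \in \Spec(R'[1/r'])$. Finally, for any $\fp \in \Spec(R[1/r])$, lying-over produces some $\fq$ with $\fq \cap R = \fp$, and the choice of $r$ forces $r' \notin \fq$; combined with $\overline{K_\fq} = \overline{K_\fp}$, the vanishing descends to $X(U)(\overline{K_\fp})$, as required.

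The main subtlety lies in extracting the right $r \in R$ from the given $r' \in R'$, which rests on the closedness of $\pi$ and the computation of its generic fibre; everything else is routine bookkeeping about base change, lying-over, and the identification of algebraic closures across finite field extensions.
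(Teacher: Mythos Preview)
Your proof is correct and follows essentially the same strategy as the paper: condition~(1) via Lemma~\ref{lm:integralextensions_polyfunctor}, and condition~(2) by producing an $r \in R$ with the property that every prime of $R'$ lying over a prime in $\Spec(R[1/r])$ avoids $r'$, then invoking lying-over and the identification of algebraic closures across the finite residue-field extension.

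The only difference is in how $r$ is extracted from $r'$. The paper simply takes any nonzero $r \in (r') \cap R$, which is nonempty because $r'$ is integral over $R$; then $r \notin \fp$ and $\fq \cap R = \fp$ immediately force $r' \notin \fq$. Your route is the geometric rephrasing: $\pi$ is closed (finite), and the generic fibre of $\pi$ misses $V(r')$, so $\pi(V(r'))$ is a proper closed subset of $\Spec(R)$ and any nonzero $r$ in its defining ideal works. Both arguments yield an $r$ with exactly the same property, and the rest of the proof is identical; the paper's choice is marginally more elementary, while yours makes the underlying picture on spectra explicit.
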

\begin{proof}
By Lemma~\ref{lm:integralextensions_polyfunctor}, condition (1)
for $(R',P_{R'},X_{R'})$ implies condition (1) for $(R,P,X)$. Let
$r' \in R'$ be a nonzero element as in condition (2) for
$(R',P_{R'},X_{R'})$, i.e., for every $U\in\fgfMod_{R'}$, every
$f\in R'[P_{R'}(U)]$ vanishing identically on $X_{R'}(U)(\kbar)$
also vanishes identically on $X_{R'}(U)(\kpbar)$ for every prime ideal
$\fp\in\Spec(R'[1/r'])$. Now $(r')\cap R$ is not the zero ideal, since
$r'$ is nonzero and integral over $R$. Pick any nonzero $r \in (r')
\cap R$. We
claim that condition (2) holds for $(R,P,X)$ with this particular $r$.


Indeed, let $U_R\in\fgfModR$
and take $U:=R'\otimes U_R$. Let $f$ be an element of $R[P(U_R)]$
vanishing identically on $X(U_R)(\kbar)$. Then $f$ is naturally induces an element
of $R'[P_{R'}(U)]$ vanishing identically on $X_{R'}(U)(\kbar)=X(U_R)(\kbar)$. So
we see that $f$ vanishes on $X_{R'}(U)(\overline{K_{\fq}})$
for each $\fq\in\Spec(R'[1/r'])$. Since $R'$ is integral over $R$, for
any $\fp \in \Spec(R)$ there exists an $\fq
\in \Spec(R')$ with $\fq \cap R=\fp$; and if, moverover, the prime
ideal $\fp$ does not contain $r$, then the prime ideal $\fq$ does not
contain $r'$. Hence $f$ vanishes identically on $\overline{K_{\fp}}$, as
desired. 
\end{proof}

We replace $(R,P,X)$ by $(R',P_{R'},X_{R'})$, so that there exists a polynomial subfunctor~$M$ of the top-degree part $P_d$ of $P$ such that the base change $M_{\kbar}$ is an irreducible polynomial subfunctor of $P_{d,\kbar}$.

\subsection{Splitting off $M$} \label{ssec:SplittingOff}

Proposition \ref{prop:GenericFreeness} guarantees that after passing to a further localisation (and using Noetherian induction for the complement), we may assume that for each $U \in \fgfModR$, the $R$-module $P(U)$ is the direct sum of a finitely generated free $R$-module and the (also finitely generated free) $R$-module $M(U)$.
In particular, both $P$ and $P':=P/M$ are polynomial functors $\fgfModR \to \fgfModR$.

Let $\pi\colon P\to P'$ be the projection morphism. For a closed
subset $X\subseteq\AA_P$, we define the closed subset
$X'\subseteq\AA_{P'}$ as the closure of $\pi(X)$. Note that
$(R,P)\rightarrow(R,P')$ and hence $\Sigma(R,P',X')$ holds. In
particular, we may and will replace $R$ by a further localisation $R[1/r]$ which ensures that, if $f \in R[P'(U)]$ vanishes identically on $X'(U)(\kbar)$, then it vanishes identically on $X'(U)(\kpbar)$ for all $\fp \in \Spec(R)$.

\subsection{The inner induction} \label{ssec:InnerInduction}

We perform the same inner induction as in \cite[\S 2.9]{draisma}. Let
$\delta_X \in \{0,1,\ldots,\infty\}$ denote the smallest degree, in
the standard grading, of a homogeneous element of $R[P(U)] \cong
R[M(U)] \otimes R[P'(U)]$ (here we use that $P(U)$ is the direct sum
of the $R$-modules $M(U)$ and $P'(U)$), over all $U \in \fgfModR$,
that lies in the vanishing ideal of $X(U)$ but does not lie in the
vanishing ideal of the pre-image in $\AA_{P(U)}$ of $X'(U) \subseteq \AA_{P'(U)}$.
Note that $\delta_X=0$ is, in fact, impossible, since the coordinates
on $R[M(U)]$ have positive degree, so that a degree-$0$ homogeneous
element of $R[P(U)]$ that lies in the ideal of $X(U)$ is an element of
$R[P'(U)]$ that lies in the ideal of $X'(U)$. At the other extreme, $\delta_X=\infty$ means that $X(U)$ is the Cartesian product of $X'(U)$ with $\AA_{M(U)}$ for all $U$.
We order closed subsets of $\AA_P$ by $Y<X$ if either $Y' \subsetneq
X'$ or else $Y'=X'$ but $\delta_Y<\delta_X$. Note that, by the outer
induction hypothesis for $\Sigma(R, P', X')$ and since
$\{0,1,\ldots,\infty\}$ is well-ordered, this order is well-founded.
Hence when proving $\Sigma(P,R,X)$, we may assume that $\Sigma(P,R,Y)$ holds for all $Y<X$. 

First suppose that $\delta_X=\infty$. Then, for all proper closed
subsets $Y$ of $X$, we have $Y<X$ and so $\Sigma(R,P,Y)$ holds by the
inner induction hypothesis. It
follows that condition (1) holds for $(R, P, X)$. Condition (2) for
$(R,P,X)$ follows from condition (2) for $(R,P',X')$, with the same
$r \in R$ to be inverted. Indeed, if $f\in R[P(U)]\cong R[M(U)] \otimes R[P'(U)]$ vanishes
identically $X(U)(\kbar)\cong\AA_{M(U)}(\kbar)\times X'(U)(\kbar)$,
then, regarding $f$ as a polynomial in the coordinates on $M(U)$
with coefficients in $R[P'(U)]$, those coefficients must all vanish
identically on $X'(U)(\kbar)$, hence on $X'(U)(\overline{K_\fp})$ for
all $\fp \in \Spec(R[1/r])$.

\subsection{A directional derivative} \label{ssec:Directional}

Next, suppose that $1 \leq \delta_X<\infty$. Let $f \in R[P(U)]\cong
R[M(U)] \otimes R[P'(U)]$ be a homogeneous polynomial of degree
$\delta_X$ in the standard grading, which lies in the ideal of $X(U)$
but not on the preimage in $\AA_{P(U)}$ of $X'(U)$. Expanding $f$
as a polynomial in the coordinates on $R[M(U)]$ with coefficients in
$R[P'(U)]$, one of those coefficients does not lie in the ideal of
$X'(U)$. Our assumptions together with Corollary \ref{cor:Kpbarpoints}
guarantee that, in fact, that coefficient does not vanish identically on
$X'(U)(\overline{K})$, so that $f$ does not vanish identically on the
pre-image of $X'(U)(\overline{K})$ in~$\AA_{P(U)}(\overline{K})$. We
then proceed as in \cite[Lemma 18]{draisma}. Let $v_1, \ldots, v_m$
be an $R$-basis of $M(U)$ and extend this with $v_{m+1}, \ldots,
v_n$ to an $R$-basis of $P(U)$, inducing an isomorphism $R[P(U)]\cong
R[x_1,\ldots,x_n]$. The expression
\[
f_{R[x_1,\ldots,x_n,y_1,\ldots,y_m,t]}\left(\sum_{i=1}^n x_i \otimes v_i + \sum_{j=1}^m ty_j \otimes v_j\right)\in R[x_1,\ldots,x_n,y_1,\ldots,y_m,t]
\]
explicitly reads as
\[
 f(x_1+t y_1, x_2+t y_2,\ldots,x_m+t y_m, x_{m+1},\ldots,x_n).
\]
Take $p=1$ if $\cha R=0$ and $p=\cha R$ otherwise. A Taylor expansion
in $t$ turns this expression into 
\[
 f(x_1,\ldots,x_n) + t^{p^e}\cdot\left(h_1(x_1,\ldots,x_n) y_1^{p^e}+\cdots+h_m(x_1,\ldots,x_n) y_m^{p^e}\right) + t^{p^e+1}\cdot g
\]
for some integer $e\geq0$, polynomial $g\in R[x_1,\ldots,x_n,y_1,\ldots,y_m,t]$ and homogeneous polynomials $h_i \in R[P(U)]$ of (standard) degree $\delta_X-p^e d$ not all vanishing identically on $X(U)(\kbar)$. 
Specialising the variables $y_i$ to values $a_i\in\{0,1\}$, we get that
\[ h(x_1,\ldots,x_n):=\sum_{i=1}^m a_i^{p^e}h_i(x_1,\ldots,x_n)  \in R[P(U)] \]
does not vanish identically on $X(U)(\overline{K})$.

Let $p\in \kbar\otimes P(U)$ be a point in $X(U)(\kbar)$ such that
$h_{\kbar}(p)\neq 0$. Relative to the chosen basis of $P(U)$, we may
write $p = (\alpha_1,\ldots,\alpha_n)$. 
Reasoning as before, let $r \in R$ be the product
of all the denominators appearing in the minimal polynomials of the
$\alpha_i$ over $K$ so that $R' =  R[1/r][\alpha_1, \cdots, \alpha_k]$
is a finite extension of $R[1/r]$ containing all $\alpha_i$. Replacing
$R$ by $R'$ and using Lemma~\ref{lm:sigmabranching_integral}, we can therefore assume that $p\in X(U)(R)$ satisfies
$h_R(p)\neq0$. Further replacing $R$ by $R[1/h_R(p)]$, we find that
$h_D(p)\neq0$ for all $R$-domains $D$.  Define $Y$ to be the biggest
closed subset of~$X$ where~$h$ does vanish.

\begin{lm}\label{lm:biggestclosed}
We have 
\[
Y(V)(D)=\{p\in X(V)(D)\mid \forall\phi\in D\otimes\Hom(V,U): h_D(P_{V,U,D}(\phi)(p))=0\}
\]
for all $V\in\fgfModR$ and $R$-domains $D$.
\end{lm}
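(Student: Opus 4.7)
The plan is to deduce this from Lemma~\ref{lm:biggestclosed1}. First I would apply that lemma with $g:=h$, producing a closed subset $Z\subseteq\AA_P$ with the explicit description
\[
Z(V)(D)=\{p\in D\otimes P(V)\mid \forall\phi\in D\otimes\Hom(V,U):h_D(P_{V,U,D}(\phi)(p))=0\},
\]
together with the extra information that $Z$ is the largest closed subset of $\AA_P$ for which $h\in\cI_{Z(U)}$.

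Next I would identify $Y$ with the pointwise intersection $X\cap Z$. Since $X$ and $Z$ are both closed subsets of $\AA_P$, so is $X\cap Z$; it is contained in $X$, and from $X\cap Z\subseteq Z$ one gets $h\in\cI_{(X\cap Z)(U)}$. Conversely, if $Y'\subseteq X$ is any closed subset with $h\in\cI_{Y'(U)}$, then the maximality statement in Lemma~\ref{lm:biggestclosed1} forces $Y'\subseteq Z$, hence $Y'\subseteq X\cap Z$. Thus $X\cap Z$ really is the biggest closed subset of $X$ on which $h$ vanishes at $U$, which by definition is $Y$.

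Finally, since intersections of closed subsets of $\AA_P$ are formed pointwise, $Y(V)(D)=X(V)(D)\cap Z(V)(D)$, and substituting the formula for $Z(V)(D)$ recalled above yields exactly the claimed equality. The only step that really warrants spelling out is the maximality of $X\cap Z$ among closed subsets of $X$ with $h$ in the ideal of their value at $U$, but this follows in one line from the maximality already built into Lemma~\ref{lm:biggestclosed1}; I do not anticipate any genuine obstacle.
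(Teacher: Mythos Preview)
Your proposal is correct and follows exactly the same approach as the paper's proof, which simply states that $Y$ is the intersection of $X$ with the biggest closed subset of $\AA_P$ where $h$ vanishes and then invokes Lemma~\ref{lm:biggestclosed1}. You have merely spelled out in detail why $X\cap Z$ has the required maximality property, which the paper leaves implicit.
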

\begin{proof}
The closed subset $Y$ is the intersection of $X$ with the biggest closed subset of~$\AA_P$ where $h$ vanishes. So the lemma follows from Lemma~\ref{lm:biggestclosed1}.
\end{proof}

Let $X=X_1 \supseteq X_2 \supseteq \cdots$ be a sequence of closed
subsets of $X$. Since $Y<X$, the statement $\Sigma(R,P,Y)$ holds by the
inner induction. In particular, the intersections of the $X_i$ with $Y$
stabilise. This settles part of condition (1) of $\Sigma(R,P,X)$. We
now develop the theory to deal with the complement of $Y$. This will
afterwards be used to settle both condition (2) for $\Sigma(R,P,X)$
in \S\ref{ssec:Condition2} and complete the proof of condition (1)
in \S\ref{ssec:NoetherianityX}.

\subsection{Dealing with the localised shift}
\label{ssec:LocShift}

In \cite[Lemma 25]{draisma}, it is proved that for all $\fp \in \Spec(R)$ and $V \in \fgfModR$, the projection $\Sh_U(P)\to \Sh_U(P)/M$ induces a homeomorphism of $\Sh_U(X)[1/h](V)(\overline{K_\fp})$ with a closed subset of the basic open $(\Sh_U(P)/M)[1/h](V)(\overline{K_{\fp}})$. This proof uses that $M_{\overline{K}_\fp}$ is irreducible, which is why we have localised so as to make this true. The proof shows that, indeed, for each linear function $x \in (\overline{K_{\fp}} \otimes M(V))^*$, the $p^e$-th power $x^{p^e}$ lies in the sum of the ideal of $\Sh_U(X)[1/h](V)(\overline{K_\fp})$ in $\overline{K_\fp}[\kpbar \otimes P(U \oplus V)][1/h]$ and the subring $\overline{K_\fp}[\kpbar \otimes (P(U \oplus V)/M(V))]$.
We globalise this result as follows: for all $V \in \fgfModR$, define 
\[ 
N(V):=\left\{x \in M(V)^* \,\middle|\, x^{p^e} \in \cI_{\Sh_U(X)[1/h]}(V) + R[P(U \oplus V)/M(V)][1/h]\right\}. 
\]
There is a slight abuse of notation here: $M(V)$ is a submodule of $P(U \oplus V)$, so $M(V)^*$ is naturally a quotient of $P(U \oplus V)^*$ rather than a submodule.
But the projection $P(U \oplus V) \to P(U \oplus V)/M(V)$ admits a section (indeed, we have arranged things such that $P(U \oplus V)$ is isomorphic to the direct sum of the free $R$-modules $M(V)$ and $P(U
\oplus V)/M(V)$), and any section yields a section $M(V)^* \to P(U \oplus V)^*$.
Two such sections differ by adding elements from $(P(U \oplus V)/M(V))^*$, which is contained in the second term above, so $N(V)$ does not depend on the choice of section.

Recall from \S\ref{ssec:Duality} that $V^* \mapsto M(V)^*$ is a
polynomial functor $M^*$ of degree $d$. 

\begin{lm}
The association $V^* \mapsto N(V)$ is a polynomial subfunctor of $M^*$.
\end{lm}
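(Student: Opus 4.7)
The plan is to verify two things: that $N(V)$ is an $R$-submodule of $M(V)^*$ for each $V$, and that the rule $V^* \mapsto N(V)$ is stable under the polynomial-law structure of $M^*$, i.e., for every $R$-algebra $A$, every $V,W \in \fgfModR$ and every $\phi \in A \otimes \Hom(V^*,W^*)$, the map $M^*_{V^*,W^*,A}(\phi)$ sends $A \otimes N(V)$ into $A \otimes N(W)$.

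Set $I(V) := \mathcal{I}_{\Sh_U(X)[1/h]}(V) + R[P(U \oplus V)/M(V)][1/h]$, an $R$-submodule of $R[P(U \oplus V)][1/h]$. If $\cha R = 0$ then $p = p^e = 1$ and the submodule property is immediate. Otherwise $R$ (and hence every nonzero $R$-algebra) has characteristic $p$, so the Frobenius $z \mapsto z^{p^e}$ is additive; together with $(rx)^{p^e} = r^{p^e} x^{p^e}$ and the $R$-stability of $I(V)$, this shows that $N(V)$ is closed under addition and scalar multiplication.

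For polynomial functoriality, the key ingredient will be the pullback $A$-algebra homomorphism $A \otimes R[P(U \oplus V)][1/h] \to A \otimes R[P(U \oplus W)][1/h]$ induced by the $A$-linear map $P_A(\id_U \oplus \phi^*)\colon A \otimes P(U \oplus W) \to A \otimes P(U \oplus V)$, where $\phi^* \in A \otimes \Hom(W,V)$ is dual to $\phi$. Two properties will suffice. First, this pullback sends $A \otimes \mathcal{I}_{\Sh_U(X)[1/h]}(V)$ into $A \otimes \mathcal{I}_{\Sh_U(X)[1/h]}(W)$: this is the defining property of the closed subset $\Sh_U(X) \subseteq \Sh_U(P)$ from Definition~\ref{de:ClosedSubsetPF}, which says precisely that the polynomial-law action preserves the vanishing ideal, applied to the parameter $\id_U \oplus \phi^*$. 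Second, it sends $A \otimes R[P(U \oplus V)/M(V)][1/h]$ into $A \otimes R[P(U \oplus W)/M(W)][1/h]$: for fixed $\phi^*$, $A$-linearity of $P_A(\id_U \oplus \phi^*)$ combined with the subfunctor relation $M \subseteq P$ sends $A \otimes M(W)$ into $A \otimes M(V)$, so $M(W)$-cosets pass to $M(V)$-cosets and functions constant on the latter pull back to functions constant on the former.

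Combining these, the pullback sends $A \otimes I(V)$ into $A \otimes I(W)$. Using the commutative duality square relating $P_A(\id_U \oplus \phi^*)$ to $M_A(\phi^*)$, a lift $\tilde{x}$ of $x \in N(V)$ in $P(U \oplus V)^*$ pulls back to a lift $\tilde{y}$ of $y := M^*_{V^*,W^*,A}(\phi)(1 \otimes x)$ in $A \otimes P(U \oplus W)^*$; since the pullback is a ring homomorphism it commutes with $p^e$-th powers, so $\tilde{y}^{p^e}$ lies in $A \otimes I(W)$, giving $y \in A \otimes N(W)$ (the same Frobenius computation shows that the section independence from the discussion preceding the lemma extends verbatim to $A$). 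The main subtlety I anticipate is the first pullback property: the closed-subfunctor characterisation in Definition~\ref{de:ClosedSubsetPF} is stated via $R$-domains, and some care is needed to confirm that the polynomial-law action really yields the desired containment at the level of $A$-tensored ideals rather than only at $R$-points; this should reduce to the standard functoriality of the pullback-of-ideals construction associated to polynomial laws.
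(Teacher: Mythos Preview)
Your approach is essentially the same as the paper's: verify that $N(V)$ is a submodule using additivity of Frobenius, then show that the pullback along $\Sh_U(P)_A(\phi^*)=P_A(\id_U\oplus\phi^*)$ sends both summands of $I(V)$ into the corresponding summands of $I(W)$, and transport $p^e$-th powers through the ring homomorphism. The one point where your write-up is incomplete is exactly the one you flagged: the claim that the pullback sends $A\otimes\cI_{\Sh_U(X)[1/h]}(V)$ into $A\otimes\cI_{\Sh_U(X)[1/h]}(W)$ is not literally ``the defining property'' in Definition~\ref{de:ClosedSubsetPF}, since that definition only speaks of $R$-domains $D$, not of arbitrary $R$-algebras $A$.

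The paper resolves this by first using $A$-linearity in $y'$ to reduce to $y'=1\otimes y$ with $y\in N(V)$, and then taking $A=R[x_1,\ldots,x_n]$ with $\phi=\sum_i x_i\otimes\phi_i$ the generic element (so that the general case follows by specialising the $x_i$). Since $R$ is a domain at this point of the argument, $R[x_1,\ldots,x_n]$ is an $R$-domain, and now the ideal containment follows exactly as in the proof of Lemma~\ref{lm:biggestclosed1}: writing $\Phi(g_1)=\sum_\alpha x^\alpha g_{1,\alpha}$, one checks $g_{1,\alpha}\in\cI_{\Sh_U(X)[1/h]}(W)$ by evaluating at $p\in\Sh_U(X)[1/h](W)(D)$, passing to the domain $D[x_1,\ldots,x_n]$, and using that $\Sh_U(X)$ is a closed subset. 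This is precisely the ``standard functoriality'' you allude to; supplying it would complete your proof and make it coincide with the paper's.
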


\begin{proof}
Let $A$ be an $R$-algebra and take $V,W\in\fgfModR$. Take $y'\in A\otimes N(V)$ 
and $\phi^*\in A\otimes\Hom(V^*,W^*)$ corresponding to $\phi \in A \otimes \Hom(W,V)$. Then 
\begin{align*}
A\otimes\Hom(M(W),M(V)) &\cong A\otimes\Hom(M(V)^*,M(W)^*)\\ 
&\cong \Hom_A(A \otimes M(V)^*,A\otimes M(W)^*). 
\end{align*}
Denote the image of $M^*_{V^*,W^*,A}(\phi^*)=M_{W,V,A}(\phi)$ in $\Hom_A(A \otimes M(V)^*,A\otimes M(W)^*)$ by $M_{W,V,A}(\phi)^*$.
We need to show that $M_{W,V,A}(\phi)^*(y')\in A\otimes N(W)$.
This condition is $A$-linear in $y'$, so we may assume that
$y'=1 \otimes y$ with $y \in N(V)$. 

Choose $A=R[x_1,\ldots,x_n]$ and $\phi=\sum_i x_i\otimes\phi_i$
where the $\phi_i$ form a basis of $\Hom(W,V)$. Then in
particular we need that 
\[
M_{W,V,R[x_1,\ldots,x_n]}({\textstyle\sum_i} x_i\otimes\phi_i)^*(1\otimes y)\in R[x_1,\ldots,x_n]\otimes N(W).
\]
Conversely, by specializing the $x_i$ to $a_i\in A$ for any $R$-algebra $A$, this in fact suffices. As $M$ is a subfunctor of $P$, we may here replace $M$ by $P$.

Since $P(V)$ is free, the $R$-linear map 
\[
P_{W,V,R[x_1,\ldots,x_n]}({\textstyle\sum_i} x_i\otimes\phi_i)^*|_{P(V)^*}\colon P(V)^*\to R[x_1,\ldots,x_n]\otimes P(W)^*
\]
induces a homomorphism $\Phi\colon R[P(V)]\to R[x_1,\ldots,x_n]\otimes R[P(W)]$ of $R$-algebras. As taking the $p^e$-th power is additive, an element $z$ is contained in $R[x_1,\ldots,x_n]\otimes N(W)$ if and only if $z^{p^e}$ is contained in 
\[
R[x_1,\ldots,x_n]\otimes (\cI_{\Sh_U(X)[1/h]}(W) + R[P(U \oplus W)/M(W)][1/h]).
\]
So we now need to show that $\Phi(y)^{p^e}=\Phi(y^{p^e})$ is contained in this latter set. Since $y\in N(V)$, we have $y^{p^e}=g_1+g_2$ for some $g_1\in \cI_{\Sh_U(X)[1/h]}(V)$ and $g_2\in R[P(U \oplus V)/M(V)][1/h]$. Now we note that $\Phi(g_1)\in R[x_1,\ldots,x_n]\otimes \cI_{\Sh_U(X)[1/h]}(W)$ as in the proof of Lemma~\ref{lm:biggestclosed1} and $\Phi(g_2)\in R[x_1,\ldots,x_n]\otimes R[P(U \oplus W)/M(W)][1/h]$. So indeed 
\[
M_{W,V,R[x_1,\ldots,x_n]}({\textstyle\sum_i} x_i\otimes\phi_i)^*(1\otimes y)\in R[x_1,\ldots,x_n]\otimes N(W)
\]
holds.
\end{proof}

\begin{lm} \label{lm:NonzeroMult}
For every $V\in\fgfModR$, every element of $M(V)^*$ has a nonzero $R$-multiple
in~$N(V)$.
\end{lm}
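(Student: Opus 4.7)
The plan is to globalise the field-level statement \cite[Lemma~25]{draisma}. Applied to our setting with $\fp=(0)$, that lemma gives $N_{\overline{K}}(V)=\overline{K}\otimes M(V)^{*}$: every $\overline{K}$-linear functional on $\overline{K}\otimes M(V)$ has its $p^{e}$-th power in the sum of $\cI_{\Sh_{U}(X_{\overline{K}})[1/h]}(V)$ and $\overline{K}[\overline{K}\otimes P(U\oplus V)/M(V)][1/h]$. For the given $x\in M(V)^{*}$, viewing $1\otimes x$ in $N_{\overline{K}}(V)$ yields a decomposition
\[ x^{p^{e}}=G_{1}+G_{2} \]
in $\overline{K}[\overline{K}\otimes P(U\oplus V)][1/h]$ with $G_{1}\in\cI_{\Sh_{U}(X_{\overline{K}})[1/h]}(V)$ and $G_{2}\in\overline{K}[\overline{K}\otimes P(U\oplus V)/M(V)][1/h]$.

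To descend this identity from $\overline{K}$ to $R$, I would first observe that the finitely many $\overline{K}$-coefficients in $G_{1}$ and $G_{2}$ are algebraic over $K$, so one passes to a finite integral extension $R'$ of a localisation of $R$ that contains them, exactly as in the proof of Proposition~\ref{prop:IrreducibleFactor}. Invoking Lemmas~\ref{lm:sigmabranching} and~\ref{lm:sigmabranching_integral} together with the outer induction hypothesis reduces the present lemma to the analogous statement over $R'$; clearing denominators there produces an identity $r^{p^{e}}x^{p^{e}}=H_{1}+H_{2}$ with nonzero $r\in R'$, $H_{2}\in R'[P(U\oplus V)/M(V)][1/h]$, and $H_{1}\in R'[P(U\oplus V)][1/h]$ vanishing on $\Sh_{U}(X_{R'})[1/h](V)(\overline{K})$.

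The main obstacle is to upgrade the last condition to $H_{1}\in\cI_{\Sh_{U}(X_{R'})[1/h]}(V)$---i.e., from mere geometric vanishing over $\overline{K}$ to genuine integral membership in the vanishing ideal. By Proposition~\ref{prop:ideal_of_closure}, this reduces to checking that $H_{1}$ vanishes on $\Sh_{U}(X_{R'})[1/h](V)(\overline{K_{\fp}})$ for every $\fp\in\Spec(R')$; the generic prime is already covered, and Noetherian induction on $\Spec(R')$ via Lemma~\ref{lm:sigmabranching} handles the remaining primes, each closed stratum $(R'/\fp_{i},P_{R'/\fp_{i}},X_{R'/\fp_{i}})$ being covered by the outer induction hypothesis. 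The underlying subtlety is that the extension $\overline{K}\otimes\cI_{\Sh_{U}(X)[1/h]}(V)$ need not be radical and may sit strictly inside $\cI_{\Sh_{U}(X_{\overline{K}})[1/h]}(V)$; converting a decomposition valid only geometrically over $\overline{K}$ into one holding integrally over $R'$ is precisely where the full strength of the inductive setup is required.
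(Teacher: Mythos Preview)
Your proposal misidentifies the obstacle and then applies machinery that does not close the gap you create. The paper's proof is essentially a two-line argument: \cite[Lemma~25]{draisma} already furnishes a decomposition with the ideal term lying in $K\otimes\cI_{\Sh_U(X)[1/h]}(V)$, and since $M(V)^*$ is free one simply clears denominators. The point you miss is that the proof (not merely the statement) of \cite[Lemma~25]{draisma} is constructive: the element playing the role of your $G_1$ is built as a $K$-linear (or $\overline{K}$-linear) combination of coefficients of $t^{p^e}$ in pullbacks $f\circ P(\psi_t)$, and each such coefficient already lies in the $R$-ideal $\cI_{\Sh_U(X)[1/h]}(V)$ because $f$ itself is in the $R$-ideal $\cI_X(U)$. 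Thus $G_1\in K\otimes\cI_{\Sh_U(X)[1/h]}(V)$, which is strictly stronger than the mere ``geometric vanishing over $\overline{K}$'' that you start from. With this, your worried-about inclusion $\overline{K}\otimes\cI_{\Sh_U(X)[1/h]}(V)\subseteq\cI_{\Sh_U(X_{\overline{K}})[1/h]}(V)$ is never invoked in the problematic direction, and no descent issue arises.

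Separately, your proposed repair does not work. First, your extension $R'$ depends on the particular $V$ and $x$, so the appeal to Lemmas~\ref{lm:sigmabranching} and~\ref{lm:sigmabranching_integral}---which replace $R$ globally in the proof of $\Sigma(R,P,X)$---is not available here. Second, and more seriously, the final step is circular or empty: the outer induction hypothesis gives you $\Sigma(R'/\fp_i,P_{R'/\fp_i},X_{R'/\fp_i})$, but neither condition (1) nor condition (2) of that statement says anything about your specific element $H_1$ vanishing on $\Sh_U(X_{R'})[1/h](V)(\overline{K_\fq})$. What you would actually need is an instance of condition~(2) for the triple $(R',\Sh_U(P)_{R'},\Sh_U(X_{R'})[1/h])$, and $(R,P)\to(R',\Sh_U(P)_{R'})$ is not one of the moves permitted by the outer induction (indeed, the analogous statement, Lemma~\ref{lm:Zprime}, is proved \emph{after} the present lemma and uses it). So your Noetherian induction over $\Spec(R')$ has no hypothesis to feed on.
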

\begin{proof}
By \cite[Lemma 25]{draisma}, any element $x$ of $M(V)^*$
has $1 \otimes x^{p^e} \in K \otimes N(V) \subseteq K \otimes M(V)^*$; in
the symbol $\subseteq$ we use that $M(V)$, and hence $M(V)^*$,
are free. Clearing denominators, we find that $r x^{p^e} \in
M(V)^*$ for some nonzero $r \in R$.
\end{proof}

\begin{lm}
There exists a nonzero $r \in R$ such that $R[1/r] \otimes N(V)
= R[1/r] \otimes M(V)^*$ holds for all $V \in \fgfModR$. 
\end{lm}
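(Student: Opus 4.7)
The plan is to evaluate at a single module $U$ of sufficiently large rank, reduce the statement to a finite generation argument over the domain $R$, and then globalise via the Friedlander-Suslin equivalence.

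First I would invoke Theorem~\ref{thm:FriedlanderSuslin}. Both $M^*$ and $N$ are polynomial functors from $\fgfModR$ to $\ModR$ of degree $\leq d$ (the previous lemma establishes that $N$ is indeed a polynomial subfunctor of $M^*$). Setting $U:=R^d$, the Friedlander-Suslin equivalence identifies the inclusion $N\hookrightarrow M^*$ with the inclusion of $S_{\leq d}(U)$-modules $N(U)\hookrightarrow M^*(U)$. Because of the reduction in \S\ref{ssec:SplittingOff}, $M(W)$ is a finitely generated free $R$-module for every $W\in\fgfModR$; in particular $M^*(U)=M(U^*)^*$ is finitely generated and free, and hence so is the quotient $M^*(U)/N(U)$ as a finitely generated $R$-module.

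Next I would apply Lemma~\ref{lm:NonzeroMult} with $V=U$: every element of $M(U)^*=M^*(U)$ has a nonzero $R$-multiple in $N(U)$, so $M^*(U)/N(U)$ is a torsion $R$-module. Pick a finite generating set $\bar{x}_1,\ldots,\bar{x}_s$ of this quotient and nonzero $r_i\in R$ with $r_i\bar{x}_i=0$; then $r:=r_1\cdots r_s$ is a nonzero element of the domain $R$ that annihilates $M^*(U)/N(U)$. Consequently
\[
R[1/r]\otimes N(U)=R[1/r]\otimes M^*(U).
\]

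Finally I would invoke the base-change compatibility of Friedlander-Suslin recorded at the end of \S\ref{ssec:FriedlanderSuslin}: over $R[1/r]$, the polynomial functors $N_{R[1/r]}$ and $M^*_{R[1/r]}$ correspond under the equivalence of categories to the $S_{\leq d}(R[1/r]\otimes U)$-modules $R[1/r]\otimes N(U)$ and $R[1/r]\otimes M^*(U)$. Since these two modules coincide (with the same inclusion between them), the equivalence forces $N_{R[1/r]}=M^*_{R[1/r]}$ as subfunctors, which evaluated on any $V\in\fgfModR$ yields $R[1/r]\otimes N(V)=R[1/r]\otimes M(V)^*$.

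The only delicate point is that the Friedlander-Suslin equivalence be applied cleanly to the inclusion $N\subseteq M^*$ and its base change, but this is automatic from its functoriality together with the previous lemma. Otherwise the argument is a straightforward combination of ``generic freeness for a single finitely generated torsion module over a domain'' with ``reconstruction of a polynomial functor from a single evaluation via Friedlander-Suslin''.
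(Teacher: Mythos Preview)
Your argument is correct and follows essentially the same route as the paper: evaluate at a free module of rank $d$, use Lemma~\ref{lm:NonzeroMult} together with finite generation of $M(R^d)^*$ to find a single nonzero $r$ killing the torsion quotient, and then invoke the Friedlander--Suslin equivalence (and its base-change compatibility) over $R[1/r]$ to propagate the equality to all $V$. Two minor cosmetic points: the symbol $U$ is already in use in this section for the shifting module, so reusing it for $R^d$ is mildly confusing; and strictly speaking $M^*(U)=M(U^*)^*$ rather than $M(U)^*$, though for $U=R^d$ these are identified via $U\cong U^*$ and the paper makes the same tacit identification.
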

\begin{proof}
Recall that the degree of the polynomial functor $M$ is $d$ and consider
$V = R^d$. By Lemma~\ref{lm:NonzeroMult} and the fact that $M(V)$
is finitely generated, there exists a nonzero $r\in R$ such that
$R[1/r]\otimes N(V) = R[1/r] \otimes M(V)^*$.
The Friedlander-Suslin lemma, for polynomial functors over $R[1/r]$, gives
that then $R[1/r] \otimes N(V) = R[1/r] \otimes M(V)^*$ for every $V$.
\end{proof}

We now replace $R$ by the localisation $R[1/r]$ and may henceforth
assume that $N(V)=M(V)^*$.

\subsection{Proof of condition (2)} \label{ssec:Condition2}

To establish condition (2) for $(P,R,X)$, we will first prove an analogous
statement for the localised shift.

\begin{lm} \label{lm:Zprime}
There exists a nonzero $r \in R$ such that the following holds
for all $V\in \fgfModR$: if $g\in R[P(U\oplus V)]$ vanishes
identically on $\Sh_U(X)[1/h](V)(\overline{K})$, then $g$
vanishes identically on $\Sh_U(X)[1/h](V)(\overline{K_\fp})$ for all primes $\fp \in \Spec(R[1/r])$.
\end{lm}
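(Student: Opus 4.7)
\emph{Proof plan.} Recalling the setup of \S\ref{ssec:SplittingOff}--\S\ref{ssec:LocShift}, we have a nonzero polynomial subfunctor $M\subseteq P_d$ fitting into a splitting $P=M\oplus(\text{complement})$ of free functors, and the globalised identity $N(V)=M(V)^*$ holds. The strategy for Lemma~\ref{lm:Zprime} is to reduce the claim to an analogous vanishing-transfer statement for the image of $\Sh_U(X)$ under the quotient projection $\pi\colon\Sh_U(P)\to(\Sh_U P)/M'$, where $M'$ denotes $M$ realised as a subfunctor of $\Sh_U(P)$ via $V\hookrightarrow U\oplus V$ (so $M'(V)=M(V)\subseteq P(U\oplus V)$), and then to invoke the outer induction hypothesis there. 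The identity $N(V)=M(V)^*$ acts as a Frobenius substitution: modulo $\cI_{\Sh_U(X)[1/h]}$, every $M$-variable can be replaced, up to a $p^e$-th power, by a function pulled back from the quotient.

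\emph{Outer induction and Frobenius reduction.} By Lemma~\ref{lm:Shift} we have $(\Sh_U P)_d=P_d$, so $((\Sh_U P)/M')_d=P_d/M$ is a proper quotient of $P_d$, making $(R,P)\to(R,(\Sh_U P)/M')$ a valid step of type (ii). Setting $Y:=\overline{\pi(\Sh_U(X))}\subseteq\AA_{(\Sh_U P)/M'}$, the outer induction hypothesis yields $\Sigma(R,(\Sh_U P)/M',Y)$, and in particular its condition (2) furnishes a nonzero $r_1\in R$ such that for every $W\in\fgfModR$, any element of $R[((\Sh_U P)/M')(W)]$ vanishing on $Y(W)(\kbar)$ also vanishes on $Y(W)(\kpbar)$ for every $\fp\in\Spec(R[1/r_1])$. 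Now fix $V\in\fgfModR$, pick an $R$-basis $x_1,\dots,x_m$ of $M(V)^*$ and lift to linear forms on $P(U\oplus V)$; the identity $N(V)=M(V)^*$ produces $\iota_i\in\cI_{\Sh_U(X)[1/h]}(V)$ and $c_i\in R[P(U\oplus V)/M(V)][1/h]$ with $x_i^{p^e}=\iota_i+c_i$. For any $g\in R[P(U\oplus V)]$, raising it to a power $g^{p^{eN}}$ with $N$ large enough that every $x_i$-exponent appearing is divisible by $p^e$, and then iteratively substituting $x_i^{p^e}\equiv c_i$ modulo $\cI_{\Sh_U(X)[1/h]}(V)$, produces $\tilde g\in R[P(U\oplus V)/M(V)][1/h]$ with $g^{p^{eN}}-\tilde g\in\cI_{\Sh_U(X)[1/h]}(V)$.

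\emph{Conclusion and main obstacle.} Assume $g$ vanishes on $\Sh_U(X)[1/h](V)(\kbar)$; then so does $\tilde g$. Since $M$ has positive degree, $M(V)$ lies in the kernel of the projection $P(U\oplus V)\to P(U)$, hence $h$ descends to an element of $R[P(U\oplus V)/M(V)]\subseteq R[((\Sh_U P)/M')(V)]$. Writing $\tilde g=\hat g/h^K$ with $\hat g\in R[((\Sh_U P)/M')(V)]$, the product $h\hat g$ vanishes on all of $\pi_V(\Sh_U(X)(V)(\kbar))$ (trivially where $h=0$, and because $\hat g$ vanishes where $h\neq 0$), hence on the closure $Y(V)(\kbar)$. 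By condition~(2) for $Y$, $h\hat g$ then vanishes on $Y(V)(\kpbar)$ for every $\fp\in\Spec(R[1/r_1])$, so $\hat g$, and therefore $\tilde g$, vanishes on $\pi_V(\Sh_U(X)[1/h](V)(\kpbar))$. Running the Frobenius reduction in reverse over $\kpbar$ yields vanishing of $g^{p^{eN}}$, and hence of $g$, on $\Sh_U(X)[1/h](V)(\kpbar)$, so $r:=r_1$ works. The main technical obstacle is the ``multiply by $h$'' manoeuvre used to pass from $\tilde g$ vanishing on the open dense subset $\pi_V(\Sh_U(X)[1/h](V)(\kbar))\subseteq Y(V)(\kbar)$ to an honest element of $R[((\Sh_U P)/M')(V)]$ vanishing on all of $Y(V)(\kbar)$, which is exactly what condition~(2) for $Y$ requires as input; and verifying that $r_1$ produced from $\Sigma(R,(\Sh_U P)/M',Y)$ works uniformly in $V$, which it does because the Frobenius reduction is functorial in $V$ and $r_1$ is intrinsic to $Y$.
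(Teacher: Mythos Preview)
Your proof is correct and follows the same strategy as the paper: use $N(V)=M(V)^*$ to reduce a power of $g$ modulo $\cI_{\Sh_U(X)[1/h]}(V)$ to an element of $R[P(U\oplus V)/M(V)][1/h]$, then invoke the outer induction hypothesis on the image closure in $(\Sh_U P)/M$ to transfer vanishing from $\overline K$ to $\overline{K_\fp}$. The only cosmetic differences are that the paper raises $g$ just to the $p^e$-th power (since in characteristic $p$ the Frobenius already makes every $x_i$-exponent a multiple of $p^e$, and in characteristic $0$ one has $p^e=1$), and the paper takes the closure of the projection of $\Sh_U(X)[1/h]$ rather than of $\Sh_U(X)$ itself; your explicit ``multiply by $h$'' step to clear denominators makes precise a detail the paper leaves implicit.
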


\begin{proof}
Assume that $g\in R[P(U\oplus V)]$ vanishes identically
on $\Sh_U(X)[1/h](V)(\overline{K})$.
View $g$ as a polynomial in the
coordinates $x_i$ of $M(V)^*$ corresponding to a basis of $M(V)$
with coefficients in $R[P(U \oplus V)/M(V)]$.  By the conclusion of
\S\ref{ssec:LocShift}, we have $N(V) = M(V)^*$, which means that each
$x_i^{p_e}$ is a sum of an element in $R[P(U \oplus V)/M(V)][1/h]$
and an element in the ideal of $\Sh_U(X)[1/h](V)$.  We then find that also
$g^{p^e}=g_1 + g_2$ with $g_1 \in R[P(U \oplus
V)/M(V)][1/h]$ and $g_2 \in \cI_{\Sh_U(X)[1/h](V)}$.
Let $Z$ be the closure of the projection of $\Sh_U(X)[1/h]$ to $(\Sh_U(P)/M)[1/h]$.
Since both $g$ and $g_2$ vanish identically on
$\Sh_U(X)[1/h](V)(\overline{K})$, $g_1$ 
vanishes identically on $Z(V)(\overline{K})$. By the outer induction
hypothesis, after a localisation that doesn't depend on $g_1$ or on $V$,
one concludes that $g_1$ vanishes identically on $Z(V)(\overline{K_\fp})$
for all $\fp \in \Spec(R)$. But then $g^{p^e}$, and hence $g$ itself,
vanish identically on $\Sh_U(X)[1/h](V)(\overline{K_{\fp}})$.
\end{proof}

Now we can establish condition (2) of $\Sigma(R,P,X)$:

\begin{prop}\label{prop:condition2}
There exists a nonzero $r \in R$ such that the following holds for all $V\in \fgfModR$: if $g\in R[P(V)]$ vanishes identically on $X(V)(\overline{K})$, then $g$ vanishes identically on $X(V)(\overline{K_\fp})$ for all primes $\fp \in \Spec(R[1/r])$.
\end{prop}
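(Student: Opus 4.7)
The plan is to do a case split on points $p\in X(V)(\kpbar)$: either $p\in Y(V)(\kpbar)$, in which case the inductive hypothesis on $Y$ finishes things, or, using the shift construction together with a diagonal section $V\to U\oplus V$, we lift $p$ to a point where Lemma~\ref{lm:Zprime} applies. Specifically, I would let $r_1$ be the nonzero element provided by condition~(2) of $\Sigma(R,P,Y)$---available because $Y<X$ and so the inner induction gives $\Sigma(R,P,Y)$---and let $r_2$ be the nonzero element provided by Lemma~\ref{lm:Zprime}. The claim will be that $r:=r_1r_2$ works.

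Fix $V\in\fgfModR$, $\fp\in\Spec(R[1/r])$, $p\in X(V)(\kpbar)$ and $g\in R[P(V)]$ vanishing identically on $X(V)(\kbar)$; the goal is $g(p)=0$. Since $Y\subseteq X$, the polynomial $g$ also vanishes on $Y(V)(\kbar)$, so condition~(2) for $Y$ gives that $g$ vanishes on $Y(V)(\kpbar)$. This handles the case $p\in Y(V)(\kpbar)$, so we may assume $p\notin Y(V)(\kpbar)$. By Lemma~\ref{lm:biggestclosed}, there is a $\phi\in\kpbar\otimes\Hom(V,U)$ such that $h_{\kpbar}(P_{V,U,\kpbar}(\phi)(p))\neq 0$.

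Next, introduce the diagonal section $\iota_\phi\colon V\to U\oplus V$, $v\mapsto(\phi(v),v)$ (interpreted over $\kpbar$), which satisfies $\pi_V\circ\iota_\phi=\id_V$ and $\pi_U\circ\iota_\phi=\phi$, where $\pi_U,\pi_V$ are the projections. Set $q:=P_{V,U\oplus V,\kpbar}(\iota_\phi)(p)$; the closed-subset property of $X$ forces $q\in X(U\oplus V)(\kpbar)=\Sh_U(X)(V)(\kpbar)$, and functoriality of $P$ gives $h(q)=h_{\kpbar}(P_{V,U,\kpbar}(\phi)(p))\neq 0$, so $q\in\Sh_U(X)[1/h](V)(\kpbar)$. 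Now let $\tilde g\in R[P(U\oplus V)]$ denote the pullback of $g$ along $P_{U\oplus V,V}(\pi_V)$. Applying the closed-subset condition for $X$ once more shows that $\tilde g$ vanishes on $\Sh_U(X)(V)(\kbar)$, and so in particular on the open $\Sh_U(X)[1/h](V)(\kbar)$. Lemma~\ref{lm:Zprime} then yields $\tilde g(q)=0$, and unfolding gives $g(p)=g(P(\pi_V\circ\iota_\phi)(p))=\tilde g(q)=0$, as required.

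The argument is routine once Lemma~\ref{lm:Zprime} is in hand; the only mildly subtle point is that the linear map $\phi$ lives over $\kpbar$ rather than $R$, so the diagonal section $\iota_\phi$ must be interpreted after base change to $\kpbar$. This is handled automatically by the polynomial-law formalism, and it is the key mechanism that converts the "generic vanishing for the localised shift" statement of Lemma~\ref{lm:Zprime} into the desired statement for $X$ itself.
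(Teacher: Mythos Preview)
Your argument is correct and takes a genuinely different route from the paper's. Both proofs split into the $Y$-case (handled by the inner induction) and its complement, and both ultimately invoke Lemma~\ref{lm:Zprime}. The difference lies in how one gets from a point $p\in X(V)(\overline{K_\fp})\setminus Y(V)(\overline{K_\fp})$ to the localised shift. The paper first restricts to $V$ of rank $\geq\rk(U)$ (handling low ranks separately via Proposition~\ref{prop:Vanishing}), replaces $V$ by $U\oplus V$, and then argues that $\Sh_U(X)[1/h](V)(\overline{K_\fp})$ is \emph{dense} in $Z(U\oplus V)(\overline{K_\fp})$ by a $\GL(\overline{K_\fp}\otimes(U\oplus V))$-orbit argument; vanishing on the open then forces vanishing on its closure. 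You instead keep the original $V$, pull $g$ back to $\tilde g\in R[P(U\oplus V)]$ along $P(\pi_V)$, and for each individual $p$ produce an explicit preimage $q=P(\iota_\phi)(p)\in\Sh_U(X)[1/h](V)(\overline{K_\fp})$ using the diagonal section $\iota_\phi=(\phi,\id_V)$, where $\phi$ is the witness from Lemma~\ref{lm:biggestclosed}. This pointwise lift is more elementary: it avoids the density/irreducibility argument, the $\GL$-action, and the separate treatment of small $V$. The paper's approach, on the other hand, is more geometric and yields the slightly stronger intermediate statement that vanishing on $Z(V)(\overline{K})$ alone already implies vanishing on $Z(V)(\overline{K_\fp})$.
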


\begin{re} \label{re:FixedRank}
For
each fixed $V$, such an $r$ exists by Proposition~\ref{prop:Vanishing}.
Taking the product of such $r$'s, the same
applies to a finite number of $V$'s, so we may restrict our attention
to all $V$ of sufficiently large rank; we will do this in the
proof.
\end{re}

\begin{proof}[Proof of Proposition~\ref{prop:condition2}]
By the inner induction hypothesis, after replacing $R$ by a localisation $R[1/r]$, we know that if $g \in R[P(V)]$ vanishes identically on $Y(V)(\overline{K})$, then it vanishes identically on $Y(V)(\overline{K_\fp})$ for all $\fp\in\Spec(R)$.

For any $V \in \fgfModR$ and $\fp\in\Spec(R)$, define $Z(V)(\kpbar):=X(V)(\kpbar) \setminus Y(V)(\kpbar)$. 
It suffices to show that with a further localisation we achieve that for any $V \in \fgfModR$, if $g \in R[P(V)]$ vanishes identically on all points of $Z(V)(\overline{K})$, then it vanishes identically on all points of $Z(V)(\overline{K_\fp})$ for all $\fp \in \Spec(R)$.
In proving this, by Remark~\ref{re:FixedRank} above, we may assume that $V$ has rank at least that of $U$. Hence we may replace $V$ by $U \oplus V$. 

Such a $g$ that vanishes identically on $Z(U \oplus V)(\kbar)$
vanishes, in particular, identically on
$\Sh_U(X)[1/h](V)(\overline{K})$. Lemma~\ref{lm:Zprime} says
that (after replacing $R$ by a localisation that does not depend
on $g$ or $V$), $g$ also vanishes identically on $\Sh_U(X)[1/h](V)(\overline{K_\fp})$ for all $\fp \in \Spec R$. This basic open is actually dense in $Z(U \oplus V)(\overline{K_\fp})$, as one sees as follows: $Z(U \oplus V)(\overline{K_\fp})$ is the image of the action
\[ 
\GL(\overline{K_{\fp}} \otimes (U\oplus V)) \times\Sh_U(X)[1/h](V)(\overline{K_\fp}) \to X(U \oplus V)(\overline{K_\fp}).
\] 
If the basic open were contained in the union of a proper subset of the
irreducible components of $Z(U\oplus V)(\overline{K_\fp})$, then, by irreducibility
of $\GL(\overline{K_{\fp}} \otimes(U\oplus V))$, so would the image of that action,
a contradiction. Hence $g$ then vanishes identically on
$Z(V)(\overline{K_\fp})$ for all $\fp \in \Spec(R)$. 
\end{proof}

\begin{re}
Note that, unlike $Y$, the $Z$ defined in the proof is not a
subset of $X$ in the sense of Definition~\ref{de:ClosedSubsetPF}. 
\end{re}

\subsection{Proof of the Noetherianity of $X$}
\label{ssec:NoetherianityX}

Finally, we prove condition (1) of $\Sigma(R,P,X)$. 
Let $X=X_1 \supseteq X_2 \supseteq \cdots$ be a sequence of
closed subsets of $X$. Recall from \S\ref{ssec:Directional} that the intersections of the $X_i$ with $Y$ stabilise. Now, consider again the projection $\Sh_U(P)[1/h] \to (\Sh_U(P)/M)[1/h]$. We let $Z_i'$ be the closure of the image of $\Sh_U(X_i)[1/h]$ in $(\Sh_U(P)/M)[1/h]$. Since the polynomial functor $(\Sh_U(P)/M)$ is smaller then $P$, we have Noetherianity for $(\Sh_U(P)/M)[1/h]$ and therefore the sequence $Z_1' \supseteq Z_2' \supseteq \cdots$ stabilises. We now conclude from this that the sequence of $\Sh_U(X_i)[1/h]$'s also stabilises.

\begin{lm}
Let $X''\subseteq X'\subseteq X$ be closed subsets, assume $\Sh_U(X'')[1/h]\subsetneq\Sh_U(X')[1/h]$ and let $Z''\subseteq Z'$ be the closures of their images in $(\Sh_U(P)/M)[1/h]$. Then $Z''\subsetneq Z'$. 
\end{lm}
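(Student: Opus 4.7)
The plan is to argue by contrapositive: I would assume $Z''=Z'$ and deduce $\Sh_U(X'')[1/h]=\Sh_U(X')[1/h]$, contradicting the strict inclusion. Since Corollary~\ref{cor:Kpbarpoints} says closed subsets are determined by their $\kpbar$-points, it suffices to fix $V\in\fgfModR$ and $\fp\in\Spec(R)$ and to show that $\Sh_U(X'')[1/h](V)(\kpbar)=\Sh_U(X')[1/h](V)(\kpbar)$.

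The first key observation I would use is that the property $N(V)=M(V)^*$ established in \S\ref{ssec:LocShift} automatically transfers from $X$ to any closed subset $W\subseteq X$: since $\cI_{\Sh_U(X)[1/h]}(V)\subseteq \cI_{\Sh_U(W)[1/h]}(V)$, every coordinate $x\in M(V)^*$ satisfies
\[ x^{p^e}\in \cI_{\Sh_U(W)[1/h]}(V)+R[P(U\oplus V)/M(V)][1/h]. \]
Applied to $W=X'$ and $W=X''$, this reproduces the geometric input of \cite[Lemma 25]{draisma}, so the restriction of $\pi\colon\Sh_U(P)[1/h]\to(\Sh_U(P)/M)[1/h]$ to $\Sh_U(X')[1/h](V)(\kpbar)$ and to $\Sh_U(X'')[1/h](V)(\kpbar)$ is a bijection onto a closed subset, which must therefore equal $Z'(V)(\kpbar)$ and $Z''(V)(\kpbar)$ respectively. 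Injectivity holds because two $\kpbar$-points sharing an image would have to differ on some coordinate $x_i\in M(V)^*$, and then on $x_i^{p^e}$ by perfectness of $\kpbar$; yet $x_i^{p^e}$ agrees modulo $\cI_{\Sh_U(X')[1/h]}(V)$ with a function pulled back from $(\Sh_U(P)/M)[1/h]$, contradiction. Surjectivity onto $Z'(V)(\kpbar)$ follows from the fact that a (purely inseparable) integral ring extension induces a surjection on $\kpbar$-valued points over an algebraically closed base.

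With these two bijections in hand, the conclusion is essentially formal: $Z''=Z'$ gives $Z''(V)(\kpbar)=Z'(V)(\kpbar)$, and since $\Sh_U(X'')[1/h](V)(\kpbar)\subseteq\Sh_U(X')[1/h](V)(\kpbar)$ (because $X''\subseteq X'$) and both map bijectively via $\pi$ onto the common set $Z''(V)(\kpbar)=Z'(V)(\kpbar)$, equality follows. The main obstacle I anticipate is not conceptual but notational: being careful about what ``closed subset of the basic open $[1/h]$'' means in the functorial framework of \S\ref{ssec:ClosedSubsets}, and verifying that the homeomorphism statement from \cite{draisma} really does apply verbatim once the input $N(V)=M(V)^*$ has been arranged. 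The remainder reduces to the standard fact that a purely inseparable map of affine schemes is a bijection on $\kpbar$-points, together with the fact that $p^e=1$ in characteristic zero makes the whole argument vacuous in that case.
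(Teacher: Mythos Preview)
Your argument is correct, but it takes a genuinely different route from the paper's. The paper argues directly at the level of ideals: picking $g\in\cI_{\Sh_U(X'')[1/h]}(V)\setminus\cI_{\Sh_U(X')[1/h]}(V)$, it uses $N(V)=M(V)^*$ to write $g^{p^e}=g_1+g_2$ with $g_1\in R[P(U\oplus V)/M(V)][1/h]$ and $g_2\in\cI_{\Sh_U(X)[1/h]}(V)\subseteq\cI_{\Sh_U(X')[1/h]}(V)$; then $g_1$ lies in $\cI_{Z''}(V)$ but not in $\cI_{Z'}(V)$, so $Z''\subsetneq Z'$. No passage to $\kpbar$-points, no Corollary~\ref{cor:Kpbarpoints}, no lying-over for integral extensions---just a three-line ideal computation.

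Your approach instead upgrades the algebraic input to a geometric statement: the inclusion of coordinate rings $R[P(U\oplus V)/M(V)][1/h]/\cI_{Z'(V)}\hookrightarrow R[P(U\oplus V)][1/h]/\cI_{\Sh_U(X')[1/h](V)}$ is integral (each $x_i$ satisfies $x_i^{p^e}\equiv s_i$), hence induces a bijection on $\kpbar$-points for every $\fp$; and likewise for $X''$. The contrapositive then falls out. This is longer and uses more machinery, but it proves something stronger along the way---that $\pi$ really is a bijection from $\Sh_U(X')[1/h](V)(\kpbar)$ onto $Z'(V)(\kpbar)$, not merely onto a closed subset contained in it---which is a pleasant globalisation of \cite[Lemma 25]{draisma}. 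The paper's proof is the efficient one; yours is the conceptual one.
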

\begin{proof}
Since $\Sh_U(X'')[1/h]\subsetneq\Sh_U(X')[1/h]$, we have 
\[
\Sh_U(X'')[1/h](V)\subsetneq\Sh_U(X')[1/h](V)
\]
for some $V\in\fgfMod_R$. This means that
$\cI_{\Sh_U(X'')[1/h]}(V)\supsetneq\cI_{\Sh_U(X')[1/h]}(V)$. Let
$g\in R[P(U\oplus V)][1/h]$ be an element of the former ideal
that is not contained in the latter. Then the same holds for
$g^{p^e}$. By the conclusion of \S\ref{ssec:LocShift}, $g^{p^e}$ is a sum of an element $g_1$ in $R[P(U \oplus V)/M(V)][1/h]$ and an element $g_2$ of $\cI_{\Sh_U(X)[1/h]}(V)\subseteq \cI_{\Sh_U(X')[1/h]}(V)$. This means that $g_1$ is also an element of $\cI_{\Sh_U(X'')[1/h]}(V)$ not contained in $\cI_{\Sh_U(X')[1/h]}(V)$. Hence
\[
\cI_{\Sh_U(X'')[1/h]}(V)\cap R[P(U \oplus V)/M(V)][1/h]\supsetneq\cI_{\Sh_U(X')[1/h]}(V)\cap R[P(U \oplus V)/M(V)][1/h]
\]
holds. The former ideal of $R[P(U \oplus V)/M(V)][1/h]$ equals $\cI_{Z''}(V)$ and the latter equals $\cI_{Z'}(V)$. So $Z''(V)\subsetneq Z'(V)$ and hence $Z''\subsetneq Z'$.
\end{proof}

By the lemma, the fact that the sequence of $Z_i'$ stabilises implies that the sequence of $\Sh_U(X_i)[1/h]$'s also stabilises. Now again, we write 
\[
Z_i(V)(\kpbar)=X_i(V)(\kpbar)\setminus Y(V)(\kpbar)
\]
for all $V\in\fgfModR$ and $\fp\in\Spec(R)$. We consider the descending sequence of $Z_i$'s. What is left to prove for the Noetherianity of $X$ is the following result.

\begin{lm}
The sequence $Z_1\supseteq Z_2\supseteq\cdots$ stabilises.
\end{lm}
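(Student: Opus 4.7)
The plan is to use the stabilisation of $\Sh_U(X_i)[1/h]$ established just above, together with the characterisation of $Y$ from Lemma~\ref{lm:biggestclosed}: a point $p \in X(V)(\kpbar)$ lies outside $Y(V)(\kpbar)$ exactly when some $\phi \in \kpbar \otimes \Hom(V,U)$ satisfies $h_{\kpbar}(P_{V,U,\kpbar}(\phi)(p))\neq 0$. Since $X_{i+1}\subseteq X_i$ yields $Z_{i+1}(V)(\kpbar)\subseteq Z_i(V)(\kpbar)$ automatically, it suffices, for $N$ chosen so that $\Sh_U(X_i)[1/h]=\Sh_U(X_N)[1/h]$ for all $i\geq N$, to prove the reverse inclusion for all such $i$, all $V\in\fgfModR$, and all $\fp\in\Spec(R)$.

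Fix $i\geq N$, $V$, $\fp$ and $p\in Z_N(V)(\kpbar)$; choose $\phi$ as above and form the polynomial law $\hat\phi=(\phi,\id_V)\in\kpbar\otimes\Hom(V,U\oplus V)$. Setting $q:=P_{V,U\oplus V,\kpbar}(\hat\phi)(p)$, the closedness of $X_N$ in $\AA_P$ gives $q\in X_N(U\oplus V)(\kpbar)=\Sh_U(X_N)(V)(\kpbar)$. Writing $\pi_U\colon U\oplus V\to U$ for the projection, the pull-back of $h$ to $\Sh_U(P)(V)$ via $P(\pi_U)$ evaluates at $q$ to $h_{\kpbar}(P_{V,U,\kpbar}(\pi_U\circ\hat\phi)(p))=h_{\kpbar}(P_{V,U,\kpbar}(\phi)(p))\neq 0$, using functoriality of $P$ and the identity $\pi_U\circ\hat\phi=\phi$. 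Hence $q\in\Sh_U(X_N)[1/h](V)(\kpbar)=\Sh_U(X_i)[1/h](V)(\kpbar)\subseteq X_i(U\oplus V)(\kpbar)$.

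Next let $\pi_V\colon U\oplus V\to V$ be the projection. Since $X_i$ is a closed subset of $\AA_P$, we have $P_{U\oplus V,V,\kpbar}(\pi_V)(q)\in X_i(V)(\kpbar)$; and from $\pi_V\circ\hat\phi=\id_V$ this element equals $p$. Combined with $p\notin Y(V)(\kpbar)$, we obtain $p\in Z_i(V)(\kpbar)$, as required. I do not anticipate a serious obstacle here: the conceptual content is just the \emph{lift then project} argument from \cite{draisma}, and it carries over cleanly because all the needed closure properties of closed subsets of $\AA_P$ under arbitrary polynomial laws (Definition~\ref{de:ClosedSubsetPF}) have already been established, as has the descent of $h$ to a function on the localised shift. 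The only care required is the routine verification of the compositional identities $\pi_U\circ\hat\phi=\phi$ and $\pi_V\circ\hat\phi=\id_V$ together with the functoriality of $P$ at each step.
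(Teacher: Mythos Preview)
Your argument is correct, and it takes a genuinely different route from the paper's own proof. The paper argues, following equation $(*)$ in \cite[\S2.9]{draisma}, that for $V$ of rank at least $m=\rk U$ one has
\[
Z_i(U\oplus V)(\kpbar)=\bigcup_{g\in\GL(\kpbar\otimes(U\oplus V))} g\cdot \Sh_U(X_i)[1/h](V)(\kpbar),
\]
so that the stabilisation of $\Sh_U(X_i)[1/h]$ forces the stabilisation of $Z_i$ on modules of rank $\geq m$; modules of smaller rank are then handled separately by invoking Proposition~\ref{prop:TopHilbert}. Your lift-then-project argument via $\hat\phi=(\phi,\id_V)$ and $\pi_V$ is more direct: it shows $Z_N(V)(\kpbar)\subseteq Z_i(V)(\kpbar)$ for \emph{every} $V$ in one stroke, so no case distinction on the rank is needed. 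The paper's $\GL$-orbit description buys a little more, namely an exact expression for $Z_i$ in terms of the localised shift, but for the bare stabilisation claim your approach is cleaner. The functoriality identities $\pi_U\circ\hat\phi=\phi$ and $\pi_V\circ\hat\phi=\id_V$ you use are immediate, and the closure property of $X_N$ and $X_i$ under maps induced by arbitrary $\phi\in\kpbar\otimes\Hom(V,W)$ is exactly Definition~\ref{de:ClosedSubsetPF}.
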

\begin{proof}
Let $m$ be the rank of $U$. As in equation $(*)$ in \cite[\S 2.9]{draisma}, we have
\begin{eqnarray*}
Z_i(U\oplus V)(\kpbar) &=& \{p \in X_i(U \oplus V)(\kpbar) \mid h (g(p)) \neq 0 \text{ for some } g \in \GL(\kpbar \otimes(U \oplus V))\}\\
&=& \bigcup_{g \in \GL(\kpbar\otimes(U \oplus V))} g \Sh_U(X_i)[1/h](V)(\kpbar)
\end{eqnarray*}
for every $\fp \in \Spec(R)$. So the sequence of $Z_i$'s restricted to $V\in\fgfModR$ of rank~$\geq m$ stabilizes. As the sequence of $X_i(R^k)$'s stabilizes for each $k\in\{0,\ldots,m-1\}$ by Proposition~\ref{prop:TopHilbert}, the unrestricted sequence of $Z_i$'s also stabilizes.
\end{proof}

Since both the sequence of $X_i\cap Y$'s and $Z_i$'s stabilize,
using Corollary~\ref{cor:Kpbarpoints}, the sequence of $X_i$'s
also stabilizes. So the closed subset $X$ is Noetherian. This
concludes the proof of condition (1) for $(R,P,X)$ and hence the
proof of Theorem~\ref{thm:Main}. 

\subsection{Dimension functions of closed subsets of polynomial
functors} 

To illustrate that the proof method for Theorem~\ref{thm:Main}
can be used to obtain further results on closed subsets
of polynomial functors, we establish a natural common
variant of 
Propositions~\ref{prop:DimensionConstructible} and~\ref{prop:DimensionFunction}.
For each $\fp \in
\Spec(R)$ define the function $f_\fp:\ZZ_{\geq 0} \to \ZZ_{\geq 0}$
as $f_\fp(n):=\dim(X(R^n)(\overline{K_\fp}))$.

\begin{prop} \label{prop:Last}
For each $\fp \in \Spec(R)$, $f_{\fp}(n)$ is a polynomial in $n$ with
integral coefficients for all $n \gg 0$. Furthermore, the map that sends
$\fp$ to this polynomial is constructible.
\end{prop}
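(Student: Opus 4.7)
\smallskip
\noindent
\textbf{Proof plan.} We follow the inductive framework of Theorem~\ref{thm:Main}: after reducing to the case where $R$ is a domain (via the minimal primes of $R$), we combine the well-founded induction on $(R,P)\in\Pi$ of \S\ref{ssec:OuterInduction} with the inner induction on $\delta_X$ of \S\ref{ssec:InnerInduction}, together with Noetherian induction on $\Spec(R)$. At each stage the aim is to produce a nonzero $r\in R$ such that on the dense open $\Spec(R[1/r])$ the map $\fp\mapsto f_\fp$ is constant and $f_\fp(n)$ agrees with a single polynomial in $n$ for $n\gg 0$; iterating on $V(r)$ via its minimal primes then gives a finite constructible stratification of $\Spec(R)$.

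For $\deg P=0$, $X\subseteq\AA_{P(0)}$ is independent of $n$ and $f_\fp$ is constant in $n$, so the claim reduces to Proposition~\ref{prop:DimensionConstructible}. In the inductive step, the reductions of \S\ref{ssec:IrredFactor}--\S\ref{ssec:SplittingOff} allow us to assume $P=M\oplus P'$ with $M_{\kbar}$ irreducible of top degree~$d$ and $P,M$ valued in $\fgfModR$. If $\delta_X=\infty$, then $X(R^n)\cong\AA_{M(R^n)}\times X'(R^n)$, whence
\[
f_\fp(n)=\dim_{\kpbar}(\kpbar\otimes M(R^n))+\dim X'(R^n)(\kpbar).
\]
By Proposition~\ref{prop:DimensionFunction} the first summand is a polynomial in $n$ of degree $\le d$ and, after a further localisation furnished by Proposition~\ref{prop:GenericFreeness}, constant in $\fp$; the second summand is handled by the induction hypothesis for the strictly smaller pair $(R,P')$ applied to the triple $(R,P',X')$.

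For $1\le\delta_X<\infty$ we split $X=Y\cup Z$ as in \S\ref{ssec:Directional}--\S\ref{ssec:NoetherianityX}, where $Y$ is the closed subset on which $h$ vanishes. Since $Y<X$ in the inner-induction order, the inner induction hypothesis gives the claim for $Y$. For the complement we have
\[
Z(U\oplus V)(\kpbar)=\bigcup_{g\in\GL(\kpbar\otimes(U\oplus V))}g\cdot\Sh_U(X)[1/h](V)(\kpbar),
\]
and by the conclusion $N(V)=M(V)^*$ of \S\ref{ssec:LocShift}, the projection $\Sh_U(X)[1/h]\to(\Sh_U(P)/M)[1/h]$ is generically a purely inseparable surjection (via $p^e$-th powers) onto the closure $\overline{Z'_U}$ of its image, so $\dim\Sh_U(X)[1/h](V)(\kpbar)=\dim\overline{Z'_U}(V)(\kpbar)$. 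Since $(R,\Sh_U(P)/M)<(R,P)$ in $\Pi$, the outer induction hypothesis applied to $(R,\Sh_U(P)/M,\overline{Z'_U})$ makes the latter dimension, after a further localisation, a polynomial in $\rk V$ independent of $\fp$. For $n\gg 0$ one then has $f_\fp(n)=\max(\dim Y(R^n)(\kpbar),\dim Z(R^n)(\kpbar))$, a maximum of two eventual polynomials, hence itself eventually polynomial.

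The main obstacle will be to make the $\GL$-orbit correction uniform: if $n=\rk V$, then $\dim Z(U\oplus V)(\kpbar)$ equals $\dim\Sh_U(X)[1/h](V)(\kpbar)$ plus the difference of $\dim\GL(\kpbar^{n+\rk U})$ and the generic stabiliser dimension on the slice, and we must express this as a single polynomial in $n$, uniformly in $\fp$. I plan to handle this by applying Proposition~\ref{prop:DimensionConstructible} to the incidence closed subset of pairs (point of $\Sh_U(X)[1/h](V)$, element of its stabiliser) inside $\AA_{P(U\oplus V)\oplus\End(U\oplus V)}$, using Proposition~\ref{prop:DimensionFunction} to bound ambient dimensions polynomially and to extract the stabiliser dimension as an eventually polynomial, generically constant function of $\fp$. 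Assembling these pieces with the outer-induction bookkeeping then delivers the claimed polynomial in $n$ together with its constructible dependence on $\fp$.
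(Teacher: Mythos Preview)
Your overall inductive scaffolding matches the paper's: same outer induction on $(R,P)$, same inner induction on $\delta_X$, same base case via Proposition~\ref{prop:DimensionConstructible}, same treatment of the $\delta_X=\infty$ case via Proposition~\ref{prop:DimensionFunction}, and same reduction of $\dim\Sh_U(X)[1/h](V)(\kpbar)$ to $\dim\overline{Z'_U}(V)(\kpbar)$ by the purely inseparable projection of \S\ref{ssec:LocShift}.

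The genuine gap is your ``main obstacle''. You claim that
\[
\dim Z(U\oplus V)(\kpbar)=\dim\Sh_U(X)[1/h](V)(\kpbar)+\bigl(\dim\GL(\kpbar^{n+\rk U})-\dim\text{stab}\bigr),
\]
and then propose to control the stabiliser term via an incidence variety. This formula is wrong, and the obstacle does not exist. The set $\Sh_U(X)[1/h](V)(\kpbar)=\{p\in X(U\oplus V)(\kpbar)\mid h(p)\neq 0\}$ is an \emph{open} subset of $X(U\oplus V)(\kpbar)$, not a transversal slice. Since each irreducible component of $X(U\oplus V)(\kpbar)$ is $\GL$-stable (by connectedness of $\GL$), a component either lies in $Y(U\oplus V)(\kpbar)$ or meets the open set $\{h\neq 0\}$, and in the latter case the open set is dense in that component. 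Hence
\[
\dim Z(U\oplus V)(\kpbar)=\dim\Sh_U(X)[1/h](V)(\kpbar)
\]
with no correction term at all. This is exactly what the paper uses: for $n\geq m:=\rk U$,
\[
f_\fp(n)=\max\Bigl(\dim Y(\kpbar^n),\ \dim\Sh_U(X)[1/h](\kpbar^{n-m})\Bigr),
\]
and both terms on the right are eventually polynomial in $n$ and constructible in $\fp$ by the induction hypotheses you already invoked. Your proposed stabiliser computation is unnecessary, and moreover would not yield the claimed formula even if carried out, because $\Sh_U(X)[1/h](V)$ is not a section of the $\GL$-action. Once you replace your orbit-dimension formula by the simple openness observation above, your plan coincides with the paper's proof.
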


\begin{proof}[Proof sketch]
Both statements follow by inductions identical to the one for
Theorem~\ref{thm:Main}, using that, in the most interesting
induction step, for $n \geq m:=\rk(U)$ the dimension
of $X_{\overline{K_\fp}}(\overline{K_\fp}^n)$ is the
maximum of the dimensions of $Y_{\overline{K_\fp}}(\overline{K_\fp}^n)$
and \[(\Sh_U(X)[1/h])_{\overline{K}_\fp}(\overline{K_{\fp}}^{n-m}).\]
Furthermore, for the case where $X_{\overline{K_\fp}}$ is the pre-image
of $X_{\overline{K_\fp}}'$, we use Proposition~\ref{prop:DimensionFunction}, and for the base case
in the induction proof for the constructibility statement we use
Proposition~\ref{prop:DimensionConstructible}.
\end{proof}

\begin{ex}
Take $R=\ZZ$, take $P=S^3$, and let $X$ be the closed subset defined
as the image closure of the polynomial transformation $(S^1)^2 \to S^3, (v,w) \mapsto v^3 +
w^3$; see \S\ref{ssec:Applications} for similar polynomial
transformations. Then $X_{\overline{K_\fp}}(\overline{K_\fp}^n)$
has dimension $2n$ for $\fp \neq (3)$ and dimension $n$ for $\fp=(3)$,
since in the latter case the set of cubes of linear forms is a linear
subspace of the space of cubics. This is an instance of
Proposition~\ref{prop:Last}. 
\end{ex}

\subsection*{Acknowledgments}
AB, AD, and JD were partly, fully, and partly supported by Vici grant
639.033.514 from the Netherlands Organisation for Scientific Research
(NWO); JD was further partly supporte by SNSF project grant 200021\_191981. They thank Andrew Snowden and Daniel Erman for comments on an
earlier version of this paper, and Rob Eggermont for useful
discussions. 

\subsection*{Conflict of interest}
On behalf of all authors, the corresponding author states that there is no conflict of interest.


\begin{thebibliography}{BH${\rm O}^+$12}

\bibitem[ABW82]{akin-buchsbaum-weyman}
K. Akin, D.\,A. Buchsbaum, J. Weyman,
{\em Schur functors and Schur complexes},
Adv. Math. 44 (1982), pp. 207--278.

\bibitem[AH20]{ananyan-hochster}
T. Ananyan, M. Hochster,
{\em Small subalgebras of polynomial rings and Stillman’s conjecture},
J. Amer. Math. Soc. 33 (2020), no. 1, pp. 291--309.

\bibitem[BH${\rm O}^+$12]{Blekherman12}
G. Blekherman, J. Hauenstein, J.C. Ottem, K. Ranestad, and B.
Sturmfels, {\em Algebraic boundaries of Hilbert's SOS cones},
Compos. Math. 148(6) (2012), pp. 1717--1735.

\bibitem[BDE19]{bik-draisma-eggermont}
A. Bik, J. Draisma, R.\,H. Eggermont, 
\textit{Polynomials and tensors of bounded strength},
Commun. Contemp. Math. 21 (2019), no. 7, 1850062 (24 pages).

\bibitem[DB${\rm E}^+$21]{bik-draisma-eggermont-snowden}
A. Bik, J. Draisma, R.\,H. Eggermont, A. Snowden,
\textit{The geometry of polynomial representations},
preprint, \verb+arXiv:2105.12621+.

\bibitem[CL74]{carter-lusztig}
R.\,W. Carter, G. Lusztig,
{\em On the modular representations of the general linear and symmetric groups}, 
Math. Z. 136 (1974), pp. 193--242.

\bibitem[DES17]{derksen-eggermont-snowden}
H. Derksen, R.\,H. Eggermont, A. Snowden,
{\em Topological noetherianity for cubic polynomials},
Algebra \& Number Theory 11 (2017), no. 9, pp. 2197--2212.
	 
\bibitem[Dra19]{draisma} 
J. Draisma,
{\em Topological Noetherianity  of polynomial functors}, 
J. Am. Math. Soc. 32 (2019), no. 3, pp. 691--707. 

\bibitem[DLL19]{draisma-lason-leykin}
J. Draisma, M. Laso\'{n}, A. Leykin,
{\em Stillman's conjecture via generic initial ideals},
Commun. Algebra 47 (2019), no. 6, pp. 2384--2395. 

\bibitem[Eis95]{eisenbud}
D. Eisenbud,
{\em Commutative Algebra with a View Toward Algebraic Geometry},
Graduate Texts in Mathematics 150, Springer-Verlag New York, 1995.

\bibitem[ESS19]{erman-sam-snowden} 
D. Erman, S.\,V. Sam, A. Snowden,
{\em Big polynomial rings and Stillman’s conjecture},
Invent. math. 218 (2019), pp. 413--439.

\bibitem[ESS20]{erman-sam-snowden4} 
D. Erman, S.\,V. Sam, A. Snowden,
{\em An equivariant Hilbert basis theorem},
Math. Res. Lett. 27 (2020), no. 1, pp. 67--77.

\bibitem[ESS21a]{erman-sam-snowden2} 
D. Erman, S.\,V. Sam, A. Snowden,
{\em Generalizations of Stillman's conjecture via twisted commutative algebra},
Int. Math. Res. Not. 2021 (2021), no. 16, pp. 12281--12304.

\bibitem[ESS21b]{erman-sam-snowden3} 
D. Erman, S.\,V. Sam, A. Snowden,
{\em Small projective spaces and Stillman uniformity for sheaves},
Algebr. Geom. 8 (2021), pp. 374--388. 

\bibitem[FS97]{friedlander-suslin}
E. Friedlander, A. Suslin,
{\em Cohomology of finite group schemes over a field},
Invent. math 127 (1997), pp. 209--270. 

\bibitem[Gre07]{green}
J.\,A. Green,
{\em Polynomial representations of $\GL_n$. With an appendix on
Schensted correspondence and Littelmann paths by K. Erdmann, J. A.
Green and M. Schocker}, Lecture Notes in Mathematics 830, Springer, 2007. 

\bibitem[KZ18a]{kazhdan-ziegler1}
D. Kazhdan, T. Ziegler,
{\em On ranks of polynomials},
Algebras and Representation Theory 21 (2018), pp. 1017--1021.

\bibitem[KZ18b]{kazhdan-ziegler2}
D. Kazhdan, T. Ziegler,
{\em Extending weakly polynomial functions from high rank varieties},
preprint, \verb+arXiv:1808.09439+.

\bibitem[NSS16]{nagpal-sam-snowden}
R. Nagpal, S.\,V. Sam, A. Snowden,
{\em Noetherianity of some degree two twisted commutative algebras}
Selecta Math. (N.S.) 22 (2016), no. 2, pp. 913--937.

\bibitem[PS09]{Peeva09}
I. Peeva and M. Stillman, {\em Open problems on syzygies and Hilbert
functions}, J. Commut. Algebra 1 (2009), no. 1, pp. 159--195.

\bibitem[Pri02]{pirashvili}
T. Pirashvili, 
{\em Polynomial functors over finite fields},
Ast\'erisque 276 (2002), pp. 369--388, Exp. no. 877.

\bibitem[Rob63]{roby}
N. Roby,
{\em Lois polynomes et lois formelles en th\'eorie des modules}
Annales scientifiques de l’É.N.S. 3e s\'erie, tome 80 (1963), no. 3, pp. 213--348.

\bibitem[SS16]{sam-snowden1}
S.\,V. Sam, A. Snowden, 
{\em $\GL$-equivariant modules over polynomial rings in infinitely many variables},
Trans. Am. Math. Soc. 368 (2016), no. 2, pp. 1097--1158.

\bibitem[SS19]{sam-snowden2}
S\,.V. Sam, A. Snowden,
{\em $\GL$-equivariant modules over polynomial rings in infinitely many variables. II}, 
Forum Math. Sigma 7 (2019), Paper no. e5 , 71 p.

\bibitem[SS20]{sam-snowden3}
S\,.V. Sam, A. Snowden,
{\em $\Sp$-equivariant modules over polynomial rings in infinitely many variables}, 
Trans. Amer. Math. Soc., to appear.

\bibitem[ST16]{sawin-tao}
W. Sawin, T. Tao, 
{\em Notes on the “slice rank” of tensors},
\verb+https://terrytao.wordpress.com/2016+ \verb+/08/24/notes-on-the-slice-rank-of-tensors/+.

\bibitem[Tou14]{touze} 
A. Touz\'e,
{\em Foncteurs strictement polynomiaux et applications},
Habilitation Thesis, 2014.
\verb+http://math.+ \verb+univ-lille1.fr/~touze/NotesRecherche/Habilitation_A_Touze.pdf+.
 
\end{thebibliography}
\end{document}